\newtheorem{thm}{Theorem}[section]
\newtheorem{prop}[thm]{Proposition}
\newtheorem{lem}[thm]{Lemma}
\newtheorem{prop-def}[thm]{Proposition-Definition}
\theoremstyle{definition}
\newtheorem{defn}[thm]{Definition}
\newtheorem{remark}[thm]{Remark}
\newtheorem{exam}[thm]{Example}
\newcommand{\nc}{\newcommand}
\nc{\delete}[1]{{}}
\nc{\mlabel}[1]{\label{#1}}  
\nc{\mcite}[1]{\cite{#1}}  
\nc{\mref}[1]{\ref{#1}}  
\nc{\meqref}[1]{\eqref{#1}}  
\nc{\mbibitem}[1]{\bibitem{#1}} 
	\nc{\mlabel}[1]{\label{#1} {{\emph{{\ }\ (#1)}}}}				 
	\nc{\mcite}[1]{\cite{#1}{{\emph{{\ }(#1)}}}}  
	\nc{\mref}[1]{\ref{#1}{{\emph{{\ }(#1)}}}}  
	\nc{\meqref}[1]{\eqref{#1}{{\emph{{\ }(#1)}}}}  
	\nc{\mbibitem}[1]{\bibitem[\bf #1]{#1}} 
\nc{\mrm}[1]{{\rm #1}}
\nc{\name}[1]{{\bf #1}}
\nc{\tforall}{\ \text{for all }}
\nc{\la}{\longrightarrow}
\nc{\ot}{\otimes}
\nc{\rar}{\rightarrow}
\nc{\ac}{\mathrm{\textup{!`}}}
\nc{\Alg}{{\mathrm{Alg}}}
\nc{\bfk}{{\bf k}}
\nc{\C}{{\mathrm{C}}}
\nc{\DA}{{\mathsf{DA}_\lambda}}
\nc{\Dif}{{{}_\lambda\!\mathfrak{Dif}}}
\nc{\Difinfty}{{{}_\lambda\!\mathfrak{Dif}_\infty}}
\nc{\DO}{{\mathsf{DO}_\lambda}}
\nc{\End}{\mrm{End}}
\nc{\Ext}{\mrm{Ext}}
\nc{\Fil}{\mrm{Fil}}
\nc{\Fr}{\mrm{Fr}}
\nc{\Frob}{\mrm{Frob}}
\nc{\Gal}{\mrm{Gal}}
\nc{\GL}{\mrm{GL}}
\nc{\Hom}{\mrm{Hom}}
\nc{\Hoch}{\mrm{Hoch}}
\nc{\hsr}{\mrm{H}}
\nc{\hpol}{\mrm{HP}}
\nc{\im}{\mrm{im}}
\nc{\Id}{\mrm{Id}}
\nc{\id}{\mrm{Id}}
\nc{\ID}{\mrm{ID}}
\nc{\Irr}{\mrm{Irr}}
\nc{\incl}{\mrm{incl}}
\nc{\length}{\mrm{length}}
\nc{\NLSW}{\mrm{NLSW}}
\nc{\Lie}{\mrm{Lie}}
\nc{\mchar}{\rm char}
\nc{\mpart}{\mrm{part}}
\nc{\ql}{{\QQ_\ell}}
\nc{\qp}{{\QQ_p}}
\nc{\rank}{\mrm{rank}}
\nc{\rcot}{\mrm{cot}}
\nc{\rdef}{\mrm{def}}
\nc{\rdiv}{{\rm div}}
\nc{\rmH}{ {\mathrm{H}}}
\nc{\rtf}{{\rm tf}}
\nc{\rtor}{{\rm tor}}
\nc{\res}{\mrm{res}}
\nc{\Sh}{{\mathrm{Sh}}}
\nc{\SL}{\mrm{SL}}
\nc{\Spec}{\mrm{Spec}}
\nc{\sgn}{{\mathrm{sgn}}}
\nc{\tor}{\mrm{tor}}
\nc{\Tr}{\mrm{Tr}}
\nc{\tr}{\mrm{tr}}
\nc{\wt}{\mrm{wt}}
\nc{\op}{\mrm{op}}
\nc{\rmS}{\mrm{S}}
\nc{\BA}{{\mathbb A}}   \nc{\CC}{{\mathbb C}}
\nc{\DD}{{\mathbb D}}   \nc{\EE}{{\mathbb E}}
\nc{\FF}{{\mathbb F}}   \nc{\GG}{{\mathbb G}}
\nc{\HH}{ \mathrm{HH}}   \nc{\LL}{{\mathbb L}}
\nc{\NN}{{\mathbb N}}   \nc{\PP}{{\mathbb P}}
\nc{\QQ}{{\mathbb Q}}   \nc{\RR}{{\mathbb R}}
\nc{\TT}{{\mathbb T}}   \nc{\VV}{{\mathbb V}}
\nc{\ZZ}{{\mathbb Z}}   \nc{\TP}{\widetilde{P}}
\nc{\m}{{\mathbbm m}}
\nc{\cala}{{\mathcal A}}    \nc{\calc}{{\mathcal C}}
\nc{\cald}{\mathcal{D}}     \nc{\cale}{{\mathcal E}}
\nc{\calf}{{\mathcal F}}    \nc{\calg}{{\mathcal G}}
\nc{\calh}{{\mathcal H}}    \nc{\cali}{{\mathcal I}}
\nc{\call}{{\mathcal L}}    \nc{\calm}{{\mathcal M}}
\nc{\caln}{{\mathcal N}}    \nc{\calo}{{\mathcal O}}
\nc{\calp}{{\mathcal P}}    \nc{\calr}{{\mathcal R}}
\nc{\cals}{{\mathcal S}}    \nc{\calt}{{\mathcal T}}
\nc{\calv}{{\mathcal V}}    \nc{\calw}{{\mathcal W}}
\nc{\calx}{{\mathcal X}}
\nc{\fraka}{{\mathfrak a}}
\nc{\frakb}{\mathfrak{b}}
\nc{\frakg}{{\frak g}}
\nc{\frakl}{{\frak l}}
\nc{\fraks}{{\frak s}}
\nc{\frakB}{{\frak B}}
\nc{\frakm}{{\frak m}}
\nc{\frakM}{{\frak M}}
\nc{\frakp}{{\frak p}}
\nc{\frakW}{{\frak W}}
\nc{\frakX}{{\frak X}}
\nc{\frakS}{{\frak S}}
\nc{\frakA}{{\frak A}}
\nc{\frakC}{{\frak{C}}}
\nc{\frakx}{{\frakx}}
\nc{\frakt}{{\mathfrak{T}}}
\nc{\lir}[1]{\textcolor{red}{\underline{Li:}#1 }}
\nc{\rev}[1]{\textcolor{red}{#1 }}
\begin{document}

\title[Homotopy differential algebras with weight]{Koszul duality, minimal model and $L_\infty$-structure for   differential algebras  with   weight
}

\author{Jun Chen, Li Guo, Kai Wang and Guodong Zhou}
\address{Jun Chen,  Kai Wang and Guodong Zhou, School of Mathematical Sciences, Key Laboratory of Mathematics and Engineering Applications (Ministry of Education), Shanghai Key laboratory of PMMP,
  East China Normal University,
 Shanghai 200241,
   China}
   \email{632159784@qq.com, wangkai@math.ecnu.edu.cn, gdzhou@math.ecnu.edu.cn}

\address{Li Guo,
	Department of Mathematics and Computer Science,
	Rutgers University,
	Newark, NJ 07102, US}
\email{liguo@rutgers.edu}

\date{\today}

\begin{abstract} A differential algebra with weight is an abstraction of both the derivation (weight zero) and the forward and backward difference operators (weight $\pm 1$). In 2010 Loday established the Koszul duality for the operad of differential algebras of weight zero. He did not treat the case of nonzero weight, noting that new techniques are needed since the operad is no longer quadratic. This paper continues Loday's work and establishes the Koszul duality in the case of nonzero weight. In the process, the minimal model and the Koszul dual homotopy cooperad of the operad governing differential algebras with weight are determined. As a consequence, a notion of homotopy differential algebras with weight is obtained and the deformation complex as well as its $L_\infty$-algebra structure  for differential algebras with weight are deduced.
\end{abstract}

\subjclass[2010]{
18M70  
12H05   
18M65  
12H10   
16E40   
16S80   
18M60 
}

\keywords{cohomology, deformation, differential algebra, homotopy cooperad,  homotopy differential algebra,   Koszul dual, $L_\infty$-algebra,  minimal model, operad}

\maketitle

\tableofcontents

\allowdisplaybreaks

\section{Introduction}
In this paper we develop the Koszul duality theory for  differential algebras with nonzero weight by determining  the minimal model and the  Koszul homotopy dual cooperad of  the corresponding operad as well as the $L_\infty$-structure on the deformation complex, thereby solving a problem raised by Loday~\mcite{Lod}.

\subsection{Differential algebras with weight}\

For a fixed scalar $\lambda$, a \name{differential operator of weight $\lambda$}~\cite{GK} (called $\lambda$-derivation in~\cite{Lod}) is a linear operator $d=d_\lambda$ on an associative algebra $A$ satisfying the operator identity

\begin{equation} \mlabel{eq:diffwt}
	d(uv)=d(u)v+ud(v)+\lambda d(u)d(v), \quad u, v\in A.
\end{equation}
When $\lambda=0$, this is the Leibniz rule of the derivation in analysis. When $\lambda\neq 0$, this is the operator identity satisfied by the differential quotient $d_\lambda(f)(x):=(f(x+\lambda)-f(x))/\lambda$ in the definition of the derivation. The special cases of $\lambda=\pm 1$ gives the forward and backward difference operators in numerical analysis and other areas. An associative algebra with a differential operator of weight $\lambda$ is called a  \name{differential algebra of weight $\lambda$}.

Traditionally, a differential algebra is referring to a differential commutative algebra or a field of weight zero, originated nearly a century ago from the pioneering work of Ritt~\mcite{Rit1} in his algebraic study of differential equations.
Through the works of Kaplansky, Kolchin, Magid, Singer and many others~\mcite{Kap,Kol,Mag,PS}, the subject has evolved into a well-established area in mathematics. In theory its directions include differential Galois theory, differential algebraic geometry and differential algebraic groups. Its importance is also reflected by its connections with broad areas from arithmetic geometry and logic to computer science and mathematical physics, such as noncommutative Geometry, quantum Fields and motives~\mcite{CM}, differential algebraic geometry ~\mcite{FLS}, dynamical systems ~\mcite{MS}, and mechanical theorem-proving ~\mcite{Wu2}.

 A commutative differential algebra naturally gives rise to a Novikov algebra by the well-known work of Gelfand and Dorfman~\mcite{GD}, see also~\mcite{KSO}. Differential algebras are also closed related  to newly emerged structures including  Novikov-Poisson algebras~\mcite{Xu}, transposed Poisson algebras~\mcite{BBGW},  anti pre-Lie (Poisson) algebras~\mcite{LB}, and Novikov bialgebras~\mcite{HBG}.

More generally, differential algebras without the commutativity condition have attracted the interest of many recent studies. Derivations on path algebras and universal enveloping algebras of differential Lie algebras were studied in~\mcite{GL,Poi,Poi1}.
From the categorical viewpoint, differential categories were introduced and then studies in~\mcite{BCLS,CL,Lem}.
In~\mcite{ATW}, the notion of a differential algebra was generalized to non(anti)commutative superspace by deformation in the study of instantons in string theory.

\subsection{Cohomology and deformations of differential algebras}\

Beginning with the pioneering works of Gerstenhaber for associative algebras and of Nijenhuis and Richardson for Lie algebras~\mcite{Ge1,Ge2,NR}, deformation, cohomology and homotopy theory have been fundamental in understanding algebraic structures.
A general philosophy, as evolved from ideas of Gerstenhaber, Nijenhuis, Richardson, Deligne, Schlessinger, Stasheff, Goldman,   Millson etc,  is that  the deformation theory of any given
mathematical object can be described  by
a certain differential graded (dg) Lie algebra or more generally a $L_\infty$-algebra associated to the
mathematical object  (whose underlying complex is called the deformation complex).  This philosophy has been made into a theorem in characteristic zero by Lurie \mcite{Lur}   and Pridham \mcite{Pri10}, expressed in terms of infinity categories. It is an important problem to explicitly construct the dg Lie algebra or $L_\infty$-algebra governing the deformation theory of the given mathematical object. For algebraic structures (operads) with binary operations, especially the binary quadratic operads, there is a general theory in which the deformations and homotopy of an algebraic structure are controlled by  a dg Lie algebra naturally coming from the operadic Koszul duality for the given algebraic structure;  see~\cite{GJ94,GK94} for the original literature and~\mcite{LV,MSS} for general treatises.

Studies of cohomology and homotopy for algebra structures with linear operators have become very active recently. The structures include differential operators and Rota-Baxter operators on associative and Lie algebras.   The developments began with~\mcite{TBGS1} which defined the controlling cohomologie as well as the  dg Lie algebra for relative Rota-Baxter operators on Lie algebras. Subsequently,
the controlling cohomologie as well as the $L_\infty$-structure    for relative Rota-Baxter   Lie algebras were found in~\mcite{LST21};
derivations (=differential operators of weight zero) on Lie algebras were considered and their cohomology theory was introduced in~\mcite{TFS};
a cohomology theory of crossed homomorphisms (=differential operators of weight $1$) on Lie algebras was established in~\mcite{PSTZ}, together with a Maurer-Cartan characterisation.
The reference~\mcite{GLSZ} introduced a cohomology theory of differential associative algebras of arbitrary weight by an ad hoc method. The paper is a predecessor of the present one and whose results will be recalled in Subsection~\ref{sec:cohomologyda}.

Due to the complexity of the algebraic structures, the $L_\infty$-algebras in these studies so far have been obtained by direct constructions or by using the derived bracket technique  without using the language of operads.

The first operadic study of algebras with linear operators is the 2010 work of Loday~\mcite{Lod} on differential algebras of weight zero. Since this operad is quadratic, it can be treated applying the above-mentioned general theory of Koszul duality~\mcite{GJ94,GK94}. In particular, Loday obtained the corresponding minimal model and cohomology theory. For further studies, see works~\mcite{DL16,KS06,LT13}.
For the case of nonzero weight, Loday made the following observation in \mcite{Lod}, where the operad $\lambda$-AsDer is just the operad of differential algebras of weight $\lambda$.

\begin{quote}
If the parameter $\lambda$ is different from 0, then the operad $\lambda$-AsDer is not a quadratic operad since the term $d(a)d(b)$ needs three generating operations to
be defined. So one needs new techniques to extend Koszul duality to this case.
\end{quote}

The goal of this paper is to address this problem observed by Loday.

\subsection{Our approach   and layout of the paper}\

One of the most important consequences of the general Koszul duality theory for a quadratic operad is that the theory directly gives the minimal model \mcite{GK94, GJ94}, that is, the cobar construction of the Koszul dual cooperad. Then the deformation complex as well as its $L_\infty$-algebra structure  (in fact a dg Lie algebra in the Koszul case) could be deduced from the minimal model \mcite{KS00}.

With the absence of the quadratic property for differential algebras with nonzero weight, the existing general Koszul duality theory does not apply, as noted by Loday as above. The purpose of this work is provide such a theory for differential algebras with nonzero weight, in hope to shed light on a general theory beyond the quadratic operads.

To get started, we use a previous work~\mcite{GLSZ}, in which a cohomology theory of differential algebras with nonzero weight was constructed from a specifically constructed cochain complex, and was shown to control the formal deformations as well as abelian extensions of the differential algebras. We first determine the  $L_\infty$-algebra structure on this deformation complex,  in an ad hoc way, such that differential algebras with weight are realized as Maurer-Cartan elements for the $L_\infty$-algebra. This suggests that the $L_\infty$-algebra should come from the minimal model of the operad $\Dif$ of differential algebras with weight. Thus instead of verifying the $L_\infty$ property directly, we put this $L_\infty$-algebra in a suitable operadic context that is comparable to the weight zero case obtained in Loday's work~\mcite{Lod}.
We introduce homotopy differential algebras with nonzero weight  and show that their operad is indeed the minimal model of the operad $\Dif$.   As a byproduct, we obtain  the Koszul dual homotopy cooperad of $\Dif$.

\smallskip

Her is an outline of the paper.

In \S\mref{sec:difinf},   after recalling in \S\mref{sec:cohomologyda} the needed notions and background on the cohomology theory developed in \mcite{GLSZ}, we propose an $L_\infty$ structure on the deformation complex for differential algebras with nonzero weight (Theorem~\mref{th:linfdiff})  in \S\mref{ss:difinf}. In \S\mref{ss:mce}, we show that this $L_\infty$-algebra structure is the right one that realizes differential algebras with weight as the Maurer-Cartan elements (Proposition~\mref{pp:difopcochain}), while holding off the verification of the $L_\infty$ property to the end of paper, until the operadic tools are developed.

In \S\mref{sec:dualhomocoop}, once the  required basics on homotopy (co)operads are collected in \S\mref{ss:homocood},  we construct in \S\mref{ss:dual} a homotopy cooperad $\Dif^\ac$,  called  the  Koszul dual homotopy cooperad of  the operad $\Dif$ of differential algebras with weight because its cobar construction is shown to be the minimal model of $\Dif$  (Theorem~\mref{thm:difmodel}).

In \S\mref{sec:homomodel}, we first introduce the dg operad  $\Difinfty$ of homotopy   differential algebras with weight  in \S\mref{ss:operad}. We  establish in \S\mref{sec:model} that the dg operad $\Difinfty$ is the minimal model of   $\Dif$  (Theorem~\mref{thm:difmodel}). We give the notion of  homotopy   differential algebras and its explicit description (Definition~\mref{de:homodifalg} and Proposition~\ref{de:homodifalg2}) in \S\mref{ss:homo}.

In \S\mref{ss:modelinf},  we determine in \S\mref{ss: Linifty} the the $L_\infty$-algebra coming from the Koszul dual homotopy cooperad and show that it coincides with the $L_\infty$-algebra of differential algebras with weight  proposed in \S\mref{ss:difinf}. We also give equivalent descriptions of homotopy   differential algebras  (Propositions~\mref{de:homodifalg3} and \ref{de:homodifalg4}) in \S\mref{ss: another definition}.

\smallskip

\noindent
{\bf Notations.} Throughout this paper, let $\bfk$ be a field of characteristic $0$. Unless otherwise stated,  vector spaces, linear maps and tensor products are taken over $\bfk$.
We will use the homological grading. For two graded spaces $V$ and $W$, let $V\otimes W$ (resp. $\Hom(V, W)$) denote the graded tensor space (resp. the space of graded linear maps).

\section{The $L_\infty$-algebra  controlling deformations of differential algebras}   \mlabel{sec:difinf}

In this section, we first recall the cohomology theory of differential algebras with weight, as developed  in \mcite{GLSZ} with some modification in sign convention. We then introduce an $L_\infty$-algebra such that the cochain complex  of differential algebras can be realized as the shift of the underlying complex of the $L_\infty$-algebra obtained by the twisting procedure. As we will see in \S\mref{ss:modelinf}, this is exactly the $L_\infty$-algebra deduced from the minimal model of the operad governing differential algebras with weight.

\subsection{Differential algebras and their cohomology}\mlabel{sec:cohomologyda}\

Let $A=(A, \mu_A, d_A)$ be a differential algebra of weight $\lambda$, with a multiplication $\mu_A$ and a differential operator $d_A$ of weight $\lambda$, as defined in Eq.~\meqref{eq:diffwt}.
A  bimodule  $M $ over the  associative algebra $(A, \mu_A)$  is called a \name{differential bimodule  over the  differential algebra} $(A, \mu_A,  d_A)$  if $M$ is further endowed with a linear transformation
   $d_M: M\to M$  such that for all $a,b\in A$ and $x\in M,$
   \begin{eqnarray*}
     d_M (ax)&=&d_A(a)x+ad_M (x)+\lambda d_A(a)d_M (x),\\
      d_M (xa)&=&x d_A(a)+d_M (x)a+\lambda d_M (x)d_A(a).
   \end{eqnarray*}
   The regular bimodule $A$ is obviously a differential bimodule over itself, called the \name{regular differential bimodule} of the differential algebra $A$.

Given a differential bimodule   $M =(M,   d_M )$   over the differential algebra $A=(A, \mu_A, d_A)$,  a new differential bimodule structure on $M$, with the same derivation $d_M$, is given by
\begin{equation}\mlabel{eq:newdifbim}
a\vdash  v=(a+\lambda d_A(a))x,\quad x\dashv  a=x(a+\lambda d_A(a)),\quad  a \in A, x\in M.
\end{equation}

For distinction, we let ${}_\vdash M _{\dashv}$ denote this new differential  bimodule structure over  $A=(A, \mu_A, d_A)$; for details, see \cite[Lemma 3.1]{GLSZ}.

Recall that the  Hochschild cochain complex  $\C^{\bullet}_\Alg(A, M )$ of an associative algebra $A$ with coefficients in a bimodule $M$  is the cochain complex
 $$(\C^{\bullet}_\Alg(A, M ):=\bigoplus_{n=0}^\infty \C^n_\Alg(A,M ),\partial_{\Alg}^{\bullet}),$$ where  for $n\geqslant 0$, $\C^n_\Alg(A,M )=\Hom(A^{\otimes n}, M )$ (in particular, $\C^0_\Alg(A,M )=M $) and  the coboundary operator $$\partial_{\Alg}^n: \C^n_\Alg(A, M )\longrightarrow  \C^{n+1}_\Alg(A, M ), n\geqslant 0,$$
\[
\partial_{\Alg}^n(f)(a_{1, n+1}):=(-1)^{n+1} a_1 f(a_{2, n+1})+\sum_{i=1}^n(-1)^{n+1-i}f(a_{1, i-1}\ot a_ia_{i+1}\ot a_{i+2, n+1})
+f(a_{1, n}) a_{n+1}
\]
 for $f\in \C^n_\Alg(A, M ),~a_1,\dots, a_{n+1}\in A$, where for $1\leqslant i\leqslant j\leqslant n+1$, we write
 $a_{i, j}:=a_i\ot \cdots \ot a_j$ (by convention, $a_{i, j}=\emptyset$ for $i>j$).
The corresponding  Hochschild cohomology is denoted by  $\rmH^{\bullet}_\Alg(A, M )$.
We write $\rmH^{\bullet}_\Alg(A):=\rmH^{\bullet}_\Alg(A, M)$ when $M $ is taken to be the regular bimodule $A$,

Let $M =(M,  d_M )$ be a differential bimodule over a differential algebra $A=(A, \mu_A, d_A)$.  The Hochschild cochain complex $\C^{\bullet}_\Alg(A, {}_\vdash M _{\dashv})$ of the associative  algebra $A=(A, \mu_A)$ with coefficients in the new bimodule ${}_\vdash M _{\dashv}$ is called the \name{cochain complex of the differential operator $d_A$} with coefficients in the  differential  bimodule $M $, denoted by  $(\C^{\bullet}_\DO(A, M ), \partial_{\DO}^{\bullet})$.  More precisely, for $g\in \C^n_\DO(A, M )\coloneqq \Hom(A^{\ot n}, M )$ and $a_1,\dots,a_{n+1}\in A$,
we have
\[\begin{split}
\partial_{\DO}^n g(a_{1, n+1})\coloneqq &(-1)^{n+1} a_1 \vdash g(a_{2, n+1})+ \sum_{i=1}^n(-1)^{n+1-i}g(a_{1, i-1}\ot a_ia_{i+1} \ot a_{i+2, n+1})
+ g(a_{1, n})\dashv a_{n+1}\\
=&(-1)^{n+1}(a_1+\lambda d_A(a_1))g(a_{2, n+1})+\sum_{i=1}^n(-1)^{n+1-i}g(a_{1, i-1}\ot a_ia_{i+1} \ot a_{i+2, n+1})\\
&+ g(a_{1, n})  (a_{n+1}+\lambda d_A(a_{n+1})).
\end{split}\]
We write $(\C^{\bullet}_\DO(A), \partial_{\DO}^{\bullet})\coloneqq (\C^{\bullet}_\DO(A, M ), \partial_{\DO}^{\bullet})$ by taking the  differential  bimodule $M $ to be the regular differential bimodule $A$.
 The cohomology of the cochain complex  $(\C^{\bullet}_{\DO}(A, M ), \partial_{\DO}^{\bullet})$, denoted by $\rmH^{\bullet}_\DO(A, M )$, is called the  \name{cohomology of the differential operator  $d_A$} with coefficients in the  differential  bimodule $M$. When the  differential  bimodule $M $ is the regular differential bimodule $A$,    write $\rmH^{\bullet}_\DO(A):=\rmH^{\bullet}_\DO(A, M)$.

\smallskip

As in \cite[Proposition 3.3]{GLSZ}, define a chain map  $\Phi^{\bullet}:  \C^{\bullet}_\Alg(A,M ) \to  \C^{\bullet}_\DO (A, M )$ as follows:
for $f\in \C^n_\Alg(A,M )$ with $  n\geqslant 1$, define
\[\begin{split}
 \Phi^n(f)(a_{1, n})\coloneqq &\sum_{k=1}^n\lambda^{k-1}\sum_{1\leqslant i_1<\cdots<i_k\leqslant n}f(a_{1, i_1-1}\ot d_A(a_{i_1})\ot a_{i_1+1, i_2-1}\ot d_A(a_{i_2})\ot \cdots \ot d_A(a_{i_k}) \ot a_{i_k+1, n})\\
 &-d_M  (f(a_{1, n})),
\end{split}\]
and $$ \Phi^0(x)\coloneqq - d_M  (x), \quad  \ x\in \C^0_\Alg(A,M )=M .$$

Let $(C_{\DA}^{\bullet}(A,M ), \partial_{\DA}^{\bullet})$ be the negative shift of the mapping cone of $\Phi^{\bullet}$. More precisely,
$$
C_{\DA}^n(A,M )\coloneqq
\begin{cases}
\C^n_\Alg(A,M )\oplus \C^{n-1}_\DO(A, M ),&n\geqslant 1,\\
\C^0_\Alg(A,M )=M ,&n=0,
\end{cases}
$$
and the  differentials $\partial_{\DA}^n: C_{\DA}^n(A,M )\rar C_{\DA}^{n+1}(A,M )$ are defined by
$$
\partial_{\DA}^n(f,g) :=  ( \partial^n_{\Alg} (f), -\partial_\DO^{n-1} (g)- \Phi^n(f)),  \quad  f\in \C^n_\Alg(A, M ),\,g\in \C^{n-1}_\DO(A,  M ), n\geqslant 1,$$
$$\partial_{\DA}^0 (x) := ( \partial^0_{\Alg} (x), -\Phi^0( x)), \quad
 x\in \C^0_\Alg(A,M )=M. $$
  The complex $(C_{\DA}^{\bullet}(A,M ), \partial_{\DA}^{\bullet})$ is called the \name{cochain complex of the differential algebra $A=(A, \mu_A, d_A)$} with coefficients in  the differential bimodule $M$. When the  differential  bimodule $M $ is the regular differential bimodule $A$, we write $(\C^{\bullet}_\DA(A), \partial_{\DA}^{\bullet}):=(\C^{\bullet}_\DA(A, M ), \partial_{\DA}^{\bullet})$.

The cohomology of the cochain complex $(C_{\DA}^{\bullet}(A, M ),\partial_{\DA}^{\bullet})$, denoted by $\rmH_{\DA}^{\bullet}(A, M )$, is called the  \name{cohomology of the differential algebra}  $A=(A, \mu_A, d_A)$ with coefficients in the differential  bimodule $M =(M,  d_M )$.

It is shown in~\mcite{GLSZ} that this cohomology theory controls formal deformations as well as abelian extensions of differential algebras.
We will further clarify this connection theoretically by using the $L_\infty$-algebra structure on the deformation complex in \S\mref{ss:difinf}; see Propositions~\mref{pp:infcohcomp} and \mref{pp:difopcochain}.

\subsection{ $L_\infty$-algebra on the deformation complex from differential algebras}\
\mlabel{ss:difinf}

We first recall some basics on $L_\infty$-algebras; for more details, see \mcite{Get09,LS93,LM,Sta92}.

Let $V=\oplus_{n\in \mathbb{Z}} V_n$ be a graded vector space. The graded symmetric algebra $S(V)$ of $V$ is defined to be the quotient of the tensor algebra $T(V)$ by   the two-sided ideal $I$   generated by
$x\ot y -(-1)^{|x||y|}y\ot x$ for all homogeneous elements $x, y\in V$. For $x_1\ot\cdots\ot x_n\in V^{\ot n}\subseteq T(V)$, write $ x_1\odot x_2\odot\dots\odot x_n$ for its image in $S(V)$.
For homogeneous elements $x_1,\dots,x_n \in V$ and $\sigma\in \rmS_n$, the Koszul sign $\epsilon(\sigma;  x_1,\dots, x_n)$ is defined by the equality
$$x_1\odot x_2\odot\dots\odot x_n=\epsilon(\sigma;  x_1,\dots,x_n)x_{\sigma(1)}\odot x_{\sigma(2)}\odot\dots\odot x_{\sigma(n)}\in S(V).$$
We also define
$$ \chi(\sigma;  x_1,\dots,x_n)= \sgn(\sigma)\ \epsilon(\sigma;  x_1,\dots,x_n),$$
where $\sgn(\sigma)$ is the sign of the permutation $\sigma$.

\begin{defn}\mlabel{Def:L-infty}
An \name{$L_\infty$-algebra} is a graded space $L=\bigoplus\limits_{i\in\mathbb{Z}}L_i$ endowed with a family of linear operators $l_n:L^{\ot n}\rightarrow L, n\geqslant 1$ with  degree $|l_n|=n-2$ subject to  the following conditions. For $n\geqslant 1$  and homogeneous elements $x_1,\dots,x_n\in L$,
\begin{enumerate}
\item (generalized anti-symmetry)  $$l_n(x_{\sigma(1)}\ot \cdots \ot x_{\sigma(n)})=\chi(\sigma; x_1,\dots,   x_n)\ l_n(x_1 \ot \cdots  \ot x_n), \  \sigma\in \rmS_n;$$
		
\item (generalized Jacobi identity)
$$\sum\limits_{i=1}^n\sum\limits_{\sigma\in \Sh(i,n-i)}\chi(\sigma; x_1,\dots,x_n)(-1)^{i(n-i)}l_{n-i+1}(l_i(x_{\sigma(1)}\ot \cdots \ot x_{\sigma(i)})\ot x_{\sigma(i+1)}\ot \cdots \ot x_{\sigma(n)})=0.$$
Here $\Sh(i,n-i)$ is the set of $(i,n-i)$
		shuffles, consisting of $\sigma\in \rmS_n$ such that
$$\sigma(1)<\cdots<\sigma(i)\ \  \mathrm{and}\ \  \sigma(i+1)<\cdots<\sigma(n).$$
\end{enumerate}
\end{defn}

In particular,  if $l_n=0$ for all  $n\geqslant 3$, then $(L,l_1,l_2)$ is called a \name{differential graded (=dg) Lie algebra}; if $l_n=0$ except $n=2$, then $(L, l_2)$ is called a \name{graded Lie algebra}.

Let $V=\bigoplus_{i\in \mathbb{Z}}V_i$ be a graded space. Let $sV$ denote the suspension of $V$ and $s^{-1}V$ the  desuspension.
Denote
$$ \mathfrak{C}_{\Alg}(V):=\Hom(T(sV), sV).$$  Given  homogeneous elements  $f\in \Hom((sV)^{\ot m},V)$ with $m\geqslant 1$   and $ g \in \Hom((sV)^{\ot n},V)$ with  $n\geqslant 0$, for  each $1\leqslant i\leqslant m$, write
$$sf\circ_i sg:=sf\circ (\Id^{\otimes (i-1)}\ot sg \ot \Id^{\otimes (m-i)}),$$
and  $sf\{sg\}:=\sum_{i=1}^m sf\circ_i sg$; when $m=0$, $sf\{sg\}$ is defined to be $0$. The  Gerstenhaber bracket~\mcite{Ge1} of $sf$ and $sg$
  is  defined by
\begin{eqnarray}\mlabel{eq:gers} [sf,sg]_G:=sf\{sg\}-(-1)^{(|f|+1)(|g|+1)}sg\{sf\}.\end{eqnarray}
 A celebrated result of Gerstenhaber  \mcite{Ge1} states that
 the graded space $\mathfrak{C}_{\Alg}(V)$ endowed with the Gerstenhaber bracket is  a graded Lie algebra.
 The following facts are quoted for later applications.

\begin{enumerate}
\item
Let $V$ be an ungraded space considered as a graded space concentrated in degree 0. Then there is a bijection between the set of Maurer-Cartan elements in the graded Lie algebra $\mathfrak{C}_{\Alg}(V)$ and the set of associative algebra structures on the space $V$, where the correspondence is  induced  by
\begin{eqnarray}\mlabel{eq:iso1} \Hom((sV)^{\ot n}, sV) \simeq \Hom(V^{\ot n}, V),  f\mapsto \tilde{f}:=    s^{-1}\circ  f \circ s^{\ot n}, \quad f\in \Hom((sV)^{\ot n},  sV).
\end{eqnarray}

\item Let $(A, \mu)$ be an associative algebra and $\alpha$ be the corresponding Maurer-Cartan element in the graded Lie algebra $\mathfrak{C}_{\Alg}(A)$. Then the underlying complex of the twisted dg Lie algebra $(\mathfrak{C}_{\Alg}(A), l_1^\alpha, l_2^\alpha)$ is  exactly    $s\C^{\bullet}_\Alg(A)$, the shift of the Hochschild cochain complex of  the  associative algebra $A$. 	
\end{enumerate}

Let $V$ be a graded space. Write
\[  \mathfrak{C}_{\DO}(V):=\Hom(T(sV),V) \]  and define the  graded space
\begin{equation}\mlabel{eq:linfdiff}
\mathfrak{C}_{\DA}(V):=\mathfrak{C}_{\Alg}(V)\oplus \mathfrak{C}_{\DO}(V)=\Hom(T(sV),sV)\oplus \Hom(T(sV),V).
\end{equation}

Now  we introduce  an $L_\infty$-algebra structure on  the  graded space $\mathfrak{C}_{\DA} (V)$ by the following process:
\begin{enumerate}
	\item For $sf, sg \in \mathfrak{C}_{\Alg}(V)$, define
	\[l_2(sf\ot sg)=[sf,sg]_G\in \mathfrak{C}_{\Alg}(V),\]
where $[-,-]_G$ is the Gerstenhaber Lie bracket in Eq.~\meqref{eq:gers}.
\item Let $sf \in  \mathfrak{C}_{\Alg}(V)$ and $g\in \mathfrak{C}_{\DO}(V)$. Define
	$$l_2(sf\ot g)=(-1)^{|f|+1}s^{-1}[sf,sg]_G\in \mathfrak{C}_{\DO}(V).$$
	\item Let $sf\in \Hom((sV)^{\ot n}, sV)\subseteq\mathfrak{C}_{\Alg}(V)$ and $g_1,\dots,g_m\in \mathfrak{C}_{\DO}(V)$ with $2\leqslant m\leqslant n$. Define
	\[l_{m+1}(sf\ot g_1\ot \cdots \ot g_m)=\lambda^{m-1}\sum_{\sigma\in \rmS_m}(-1)^\xi s^{-1}\big(sf\{sg_{\sigma(1)},\dots,sg_{\sigma(m)}\}\big)\in \mathfrak{C}_{\DO}(V),\]
	where $(-1)^\xi=\chi(\sigma;g_1,\dots,g_m)(-1)^{m(|f|+1)+\sum\limits_{k=1}^{m-1}\sum\limits_{j=1}^k|g_{\sigma(j)}|}$.
\item Let $sf\in  \mathfrak{C}_{\Alg}(V)$ and $g_1,\dots,g_m\in \mathfrak{C}_{\DO}(V)$ with $  m \geqslant 1$. For $1\leqslant k\leqslant m$, define
	\[l_{m+1}(g_1\ot \cdots\ot g_k \ot sf\ot g_{k+1}\ot \cdots \ot g_m )\in\mathfrak{C}_{\DO}(V)\]
	to be
	\[l_{m+1}(g_1\ot \cdots\ot g_k \ot sf\ot g_{k+1}\ot \cdots \ot g_m ):=(-1)^{(|f|+1)(\sum\limits_{j=1}^k|g_j|)+k}l_{n+1}(sf\ot g_1\ot \cdots \ot g_m),\]
	where the right hand side has been defined in (ii) and (iii).
	\item All other components of the  operators $\{l_n\}_{n\geqslant 1}$ vanish.
\end{enumerate}

Then we can state one of our main results in this paper. \begin{thm}\mlabel{th:linfdiff}
	Given a graded space $V$ and $\lambda\in \bfk$, the graded space $\mathfrak{C}_{\DA}(V)$ endowed with the operations $\{l_n\}_{n\geqslant 1}$ defined above is an $L_\infty$-algebra.
\end{thm}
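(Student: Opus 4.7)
The plan is to avoid a direct case-by-case verification of the generalized Jacobi identities in Definition~\ref{Def:L-infty}, which would require intricate sign bookkeeping according to how inputs from $\mathfrak{C}_{\Alg}(V)$ and $\mathfrak{C}_{\DO}(V)$ are interleaved among the $n$ arguments of $l_n$. Instead, I will realize the candidate brackets $\{l_n\}_{n\geq 1}$ as the convolution $L_\infty$-structure arising from a suitably defined homotopy cooperad dual to $\Dif$; this is the approach the paper explicitly advertises in \S\ref{sec:intro} and which is the reason the verification is deferred to the end.

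Concretely, the strategy proceeds in three stages. First, I would construct a homotopy cooperad $\Dif^{\ac}$ (done in \S\ref{sec:dualhomocoop}) whose generators reflect the two summands of $\mathfrak{C}_{\DA}(V)$: a family dual to the binary associative product (responsible for $\mathfrak{C}_{\Alg}(V)$) and, for each $m\geq 1$, a generator of arity $m$ encoding the differential together with a factor of $\lambda^{m-1}$ (responsible for $\mathfrak{C}_{\DO}(V)$). Second, appealing to the general fact that for any homotopy cooperad $\mathcal{C}$ and any graded space $V$ the convolution $\Hom(\mathcal{C},\End_V)$ carries a canonical $L_\infty$-structure whose brackets are induced by the decomposition maps of $\mathcal{C}$ and the partial compositions of $\End_V$, I would exhibit a graded isomorphism
$$\Hom(\Dif^{\ac},\End_V)\;\cong\;\mathfrak{C}_{\DA}(V)$$
that identifies the two $\Hom$-summands coming from the two families of generators with $\mathfrak{C}_{\Alg}(V)$ and $\mathfrak{C}_{\DO}(V)$, respectively. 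The desuspension $s^{-1}$ appearing in items (ii) and (iii) of the definition of $\{l_n\}$ is precisely the one induced by the degree shift between the two families of cogenerators.

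Third, I would match each clause (i)--(v) in the definition of $\{l_n\}$ with the corresponding piece of the convolution bracket. Clause (i) reproduces the Gerstenhaber bracket on $\mathfrak{C}_{\Alg}(V)$, coming from the cooperadic decomposition on the associative cogenerators. Clauses (ii) and (iv) come from the mixed decompositions that send an $\Alg$-cogenerator to a pair of one $\Alg$- and one $\DO$-cogenerator. Clause (iii) corresponds to the decomposition sending an $\Alg$-cogenerator in arity $n$ to one $\Alg$-cogenerator plus $m\geq 2$ $\DO$-cogenerators; the coefficient $\lambda^{m-1}$ together with the sum over $\sigma\in S_m$ and the Koszul sign $\chi(\sigma;g_1,\ldots,g_m)(-1)^{m(|f|+1)+\sum_{k=1}^{m-1}\sum_{j=1}^{k}|g_{\sigma(j)}|}$ is exactly what the Koszul sign rule, applied to the decomposition of $\Dif^{\ac}$ constructed in \S\ref{ss:dual}, produces. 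Clause (v) records the vanishing of all remaining decompositions.

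Once these identifications are in place, both axioms of Definition~\ref{Def:L-infty} become formal consequences of the general theorem that the convolution of a homotopy cooperad with an endomorphism operad is an $L_\infty$-algebra, in turn a consequence of the defining coassociativity relations of $\Dif^{\ac}$. The main obstacle is the sign matching in the third stage: ensuring that the prefactors $(-1)^{|f|+1}$, $(-1)^{(|f|+1)(\sum_{j=1}^{k}|g_j|)+k}$ and the Koszul sign in clause (iii) are \emph{precisely} the ones prescribed by the cooperad structure of $\Dif^{\ac}$ after one accounts for the desuspension on the $\mathfrak{C}_{\DO}$-summand. After this comparison no separate verification of the Jacobi identity is needed, matching the strategy of \S\ref{ss: Linifty} where the agreement of the two $L_\infty$-structures will be established.
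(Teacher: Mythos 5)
Your proposal follows essentially the same route as the paper: the authors also defer the verification, construct the Koszul dual homotopy cooperad $\Dif^{\ac}$ in \S\mref{ss:dual} (with the cooperad axioms checked via $\partial^2=0$ on the cobar construction), and then in \S\mref{ss: Linifty} identify the convolution $L_\infty$-algebra $\mathbf{Hom}(\Dif^{\ac},\End_V)^{\prod}\cong\mathbf{Hom}(\mathscr{S}(\Dif^{\ac}),\End_{sV})^{\prod}$ with $\mathfrak{C}_{\DA}(V)$, matching each clause (i)--(v) and its signs exactly as you outline. The only refinement worth noting is that the paper routes the sign comparison through the operadic suspension $\mathscr{S}(\Dif^{\ac})$ and explicit bijections $f\mapsto\widehat{f}$, $g\mapsto\overline{g}$ to tame the Koszul signs, which is the concrete realization of the sign-matching you flag as the main obstacle.
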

The theorem can be proved by a direct but lengthy verification. Instead, we  will present an operadic proof in the spirit of~\mcite{KS00,VdL02,VdL03} once we obtain the minimal model. We first sketch the proof below, with details supplied later. We then give an application in the next subsection, realizing differential algebras with weight as the Maurer-Cartan elements in this $L_\infty$-algebra.

\begin{proof} (A sketch)
The minimal model of the differential algebra operad $\Dif$ is obtained in Theorem~\mref{thm:difmodel}. It is shown  in \S\mref{ss:modelinf} that  $\mathfrak{C}_{\DA}(V)$ is exactly the $L_\infty$-structure resulting from  this minimal model.
\end{proof}

\subsection{Realising differential algebra structures as Maurer-Cartan elements}\
\mlabel{ss:mce}

We give an application of Theorem~\mref{th:linfdiff}. \begin{defn}\mlabel{de:linfmc}
	An element $\alpha$ of degree $-1$ in an  $L_\infty$-algebra $(L,\{l_n\}_{n\geqslant1})$   is called a \name{Maurer-Cartan element} if it satisfies the \name{Maurer-Cartan equation}:
	\begin{eqnarray}\mlabel{eq:mce}\sum_{n=1}^\infty\frac{1}{n!}(-1)^{\frac{n(n-1)}{2}} l_n(\alpha^{\ot n})=0\end{eqnarray}
(in particular, this infinite sum needs to be well defined).
\end{defn}

For a dg Lie algebra $(L,l_1,l_2)$, Eq.~\meqref{eq:mce} reduces to
\begin{eqnarray}\mlabel{eq:dglamc} l_1(\alpha)-\frac{1}{2}l_2(\alpha\ot \alpha)=0.\end{eqnarray}

\begin{prop}[twisting procedure\cite{Get09}]\mlabel{pp:deflinf}
	Let $\alpha$ be a Maurer-Cartan element in an  $L_\infty$-algebra $(L,\{l_n\}_{n\geqslant1})$. A new $L_\infty$-structure $\{l_n^\alpha: L^{\ot n}\rightarrow L\}_{n\geqslant 1}$ on the graded space $L$ is defined by
	\begin{equation}\mlabel{eq:twlinf}
			l^\alpha_n(x_1\ot \cdots\ot x_n):=\sum_{i=0}^\infty\frac{1}{i!}(-1)^{in+\frac{i(i-1)}{2}}l_{n+i}(\alpha^{\ot i}\ot x_1\ot \cdots\ot x_n),\  x_1, \dots, x_n\in L,
	\end{equation}
provided the infinite sums exist. The resulting $L_\infty$-algebra $(L, \{l_n^\alpha\}_{n\geqslant 1})$ is called the \name{twisted $L_\infty$-algebra} by the Maurer-Cartan element $\alpha$.
\end{prop}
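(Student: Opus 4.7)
The plan is to recast the problem in the cocommutative coalgebra formalism. An $L_\infty$-algebra structure on $L$ is equivalent to a square-zero coderivation $D$ of degree $-1$ on the reduced cofree cocommutative coalgebra $C := \overline{S}^c(sL)$, whose corestriction to $sL$ encodes, via a desuspension isomorphism analogous to \meqref{eq:iso1}, the family $\{l_n\}_{n \geqslant 1}$. Under this dictionary, a degree $-1$ Maurer-Cartan element $\alpha \in L_{-1}$ gives rise to the formal group-like series $e^{s\alpha} := \sum_{n \geqslant 1} \tfrac{1}{n!}(s\alpha)^{\odot n}$, living in a suitable completion of $C$; unraveling the Koszul signs from the desuspensions shows that the Maurer-Cartan equation \meqref{eq:mce} is equivalent to $D(e^{s\alpha}) = 0$.

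Given this translation, I would introduce the translation automorphism $T_\alpha$ of (the completion of) $C$ defined by $T_\alpha(x) := e^{s\alpha} \odot x$, and consider the conjugated operator $D^\alpha := T_{-\alpha} \circ D \circ T_\alpha$. Three points then need to be verified. First, $D^\alpha$ is again a coderivation of degree $-1$, since such coderivations are stable under conjugation by coalgebra automorphisms. Second, $D^\alpha$ is square-zero because $D$ is. Third, $D^\alpha$ preserves the reduced coalgebra (equivalently, has vanishing constant part) precisely when $D(e^{s\alpha}) = 0$, i.e., when $\alpha$ is Maurer-Cartan. The new $L_\infty$-structure is then obtained by corestricting $D^\alpha$ to $sL$ and desuspending.

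The main obstacle, and the only nontrivial content of the argument, is the combinatorial check that this corestriction reproduces formula \meqref{eq:twlinf} on the nose. The coefficient $\tfrac{1}{i!}$ comes directly from the divided power $\tfrac{1}{i!}(s\alpha)^{\odot i}$ in the expansion of $e^{s\alpha}$, while the signs $(-1)^{in + i(i-1)/2}$ arise from commuting the $i$ factors $s\alpha$ past $sx_1, \ldots, sx_n$ and from the combined action of the suspension/desuspension isomorphisms on the $(n+i)$-fold input and $1$-fold output. Once these sign conventions are matched, the generalized Jacobi identity of Definition~\mref{Def:L-infty}(ii) for $\{l_n^\alpha\}$ becomes equivalent to $(D^\alpha)^2 = 0$, which is automatic. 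The convergence proviso in the statement ensures that the formal sums $e^{s\alpha}$ and the induced compositions represent well-defined elements of $L$, so these formal identities descend to genuine identities. A brute-force alternative would be to expand the generalized Jacobi identity for $l_n^\alpha$ directly and reorganize the resulting double sum by the total number of inputs, reducing everything to the original Jacobi identities for $\{l_n\}$ together with \meqref{eq:mce}; this avoids coalgebras but obscures the conceptual origin of the signs.
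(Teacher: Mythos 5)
The paper offers no proof of this proposition at all: it is imported verbatim from Getzler's work, with \cite{Get09} doing all the work, so there is nothing in the text to compare your argument against line by line. That said, your coalgebraic argument is the standard proof of the twisting procedure and is correct in outline: encode $\{l_n\}$ as a square-zero degree $-1$ coderivation $D$ on (a completion of) the cofree cocommutative coalgebra on $sL$, conjugate by multiplication with the group-like element $e^{s\alpha}$, and read off the new structure maps from the corestriction. The three verifications you list are the right ones, and your identification of the curvature term $D^\alpha(1)=e^{-s\alpha}\odot D(e^{s\alpha})$ as the precise obstruction killed by the Maurer--Cartan equation is the heart of the matter. Two small points of precision. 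First, "$D^\alpha$ preserves the reduced coalgebra" is not quite the right formulation --- any coderivation of a counital coalgebra automatically maps $\ker\varepsilon$ to $\ker\varepsilon$; what fails without the Maurer--Cartan condition is that $D^\alpha$ restricted to the reduced part is a coderivation for the \emph{reduced} coproduct, which is exactly the condition $D^\alpha(1)=0$, as your parenthetical indicates. Second, you defer the sign and coefficient bookkeeping that produces $\tfrac{1}{i!}(-1)^{in+i(i-1)/2}$; this is legitimate to defer (it is pure convention-matching against the paper's suspension conventions and against the sign $(-1)^{n(n-1)/2}$ in Eq.~\meqref{eq:mce}), but it is the only place where an error could realistically creep in, so a complete write-up would have to fix the convention for $(s^{-1})^{\otimes n}$ once and carry it through both \meqref{eq:mce} and \meqref{eq:twlinf} consistently. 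With those caveats, the proposal is sound and, if anything, more informative than the paper, which simply cites the result.
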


For a dg Lie algebra $(L,l_1,l_2)$   and a Maurer-Cartan element $\alpha\in L_{-1}$,  Eq.~\meqref{eq:twlinf}  reduces to
\begin{eqnarray}\mlabel{Eq: twist dgla} l_1^\alpha(x) = l_1(x)-l_2(\alpha\ot x), \  x\in L,  \ \mathrm{and}\  l_2^\alpha = l_2.	
\end{eqnarray}

We fix the isomorphism
 \begin{eqnarray}\mlabel{eq:iso2}
 	\Hom((sV)^{\ot n},  V) \simeq \Hom(V^{\ot n}, V), g\mapsto \check{g}:=     g \circ  s^{\ot n}, \,  g\in \Hom((sV)^{\ot n},  V).
\end{eqnarray}
The following proposition follows from general facts about $L_\infty$-structures obtained from minimal models; see the discussion at the end of \S\mref{ss:homocood}.

\begin{prop}\mlabel{pp:damc} Let $V$ be an ungraded space considered as a graded space concentrated in degree $0$. Then a differential algebra structure of weight $\lambda$ on $V $ is equivalent to a  Maurer-Cartan element in the $L_\infty$-algebra $\mathfrak{C}_{\DA}(V)$ in Theorem~\mref{th:linfdiff}.
\end{prop}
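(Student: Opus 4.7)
The plan is to identify degree $-1$ elements of $\mathfrak{C}_\DA(V)$ with pairs $(\mu, d)$ of a binary and a unary operation on $V$, then to show that the Maurer-Cartan equation \meqref{eq:mce} reduces precisely to associativity of $\mu$ together with \meqref{eq:diffwt}. Since $V$ lives in homological degree $0$, the only summands of $\mathfrak{C}_\DA(V) = \Hom(T(sV), sV) \oplus \Hom(T(sV), V)$ contributing in degree $-1$ are $\Hom((sV)^{\otimes 2}, sV)$ and $\Hom(sV, V)$; via \meqref{eq:iso1} and \meqref{eq:iso2} these identify with a binary operation $\mu$ and a unary operation $d$ on $V$, so a degree $-1$ element $\alpha = sf + g$ is nothing but such a pair.

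The conceptual route, which the paper points to, goes through the minimal model. By Theorem~\mref{thm:difmodel}, $\Difinfty$ is the minimal model of $\Dif$, and the general principle recalled at the end of \S\mref{ss:homocood} identifies Maurer-Cartan elements in the $L_\infty$-algebra coming from the Koszul dual homotopy cooperad $\Dif^\ac$ and $V$ with $\Difinfty$-algebra structures on $V$. The fact that this $L_\infty$-algebra is exactly $\mathfrak{C}_\DA(V)$ will be verified in \S\mref{ss: Linifty}. When $V$ is concentrated in degree $0$, a degree count forces every higher $\Difinfty$-operation to act trivially on $V$, so a $\Difinfty$-algebra structure on $V$ collapses to a strict $\Dif$-algebra, that is, to a differential algebra of weight $\lambda$.

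For a direct sanity check one can verify the equivalence by unfolding \meqref{eq:mce}. Writing $\alpha = sf + g$ and splitting along $\mathfrak{C}_\DA = \mathfrak{C}_\Alg \oplus \mathfrak{C}_\DO$, the $\mathfrak{C}_\Alg$-component picks up only $-\tfrac{1}{2}[sf, sf]_G$, whose vanishing is the classical Gerstenhaber reformulation of associativity of $\mu$. The $\mathfrak{C}_\DO$-component receives two linear-in-$g$ contributions $l_2(sf \otimes g)$ and $l_2(g \otimes sf)$ (clauses (ii) and (iv)), whose sum evaluates at $(sv_1, sv_2)$ to a scalar multiple of $d(\mu(v_1,v_2)) - \mu(d(v_1),v_2) - \mu(v_1, d(v_2))$, together with three quadratic-in-$g$ contributions $l_3(sf \otimes g \otimes g)$, $l_3(g \otimes sf \otimes g)$ and $l_3(g \otimes g \otimes sf)$ (clauses (iii) and (iv)), each carrying the prefactor $\lambda$ and evaluating to a scalar multiple of $\mu(d(v_1), d(v_2))$. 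Matching the combinatorial prefactors $\tfrac{1}{2!}(-1)$ and $\tfrac{1}{3!}(-1)^{3}$ from \meqref{eq:mce} reproduces exactly \meqref{eq:diffwt}.

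The principal obstacle in the direct verification is sign bookkeeping: one must carry the parities $|f|=-2$, $|g|=-1$ and $|sg|=0$ through the Koszul signs $\chi(\sigma;\cdots)$ in clauses (iii)--(iv) and through the Gerstenhaber bracket \meqref{eq:gers}; once these are pinned down the remaining computation is essentially mechanical. In the conceptual route the subtle point is the assertion that the degree-$0$ hypothesis on $V$ kills every higher $\Difinfty$-operation, which is a degree count best performed against the explicit presentation provided by Proposition~\mref{de:homodifalg2}.
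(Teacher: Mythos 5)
Your proposal is correct and follows essentially the same route as the paper, which proves this proposition only by the one-line appeal to the general fact (end of \S\mref{ss:homocood}) that Maurer--Cartan elements of the convolution $L_\infty$-algebra correspond to morphisms $\Difinfty\to\End_V$, combined with the identification of that $L_\infty$-algebra with $\mathfrak{C}_{\DA}(V)$ carried out in \S\mref{ss: Linifty} and the observation that on a space concentrated in degree $0$ all higher operations vanish for degree reasons. Your additional direct unfolding of the Maurer--Cartan equation is a consistent sanity check (and matches the computations the paper performs in Propositions~\mref{pp:infcohcomp} and \mref{pp:difopcochain}), but it is not needed for, nor part of, the paper's argument.
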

The following result justifies the cohomology theory introduced in \mcite{GLSZ}  and recalled in \S\mref{sec:cohomologyda}.
 \begin{prop}\mlabel{pp:infcohcomp}
	Let $(A,\mu, d)$ be a differential  algebra of weight $\lambda$. Twisting the $L_\infty$-algebra $\mathfrak{C}_{\DA}(A)$ by the Maurer-Cartan element corresponding to the differential  algebra structure $(A,\mu,d)$, then its  underlying complex is exactly $s\C^\bullet_{\DA}(A)$, the suspension of the cochain complex of the differential  algebra $(A, \mu, d)$.
\end{prop}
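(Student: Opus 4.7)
The plan is to unpack the Maurer--Cartan element $\alpha\in\mathfrak{C}_{\DA}(A)$ corresponding to $(A,\mu,d)$ and to compute the twisted differential $l_1^\alpha$ on each summand of the decomposition $\mathfrak{C}_{\DA}(A)=\mathfrak{C}_{\Alg}(A)\oplus\mathfrak{C}_{\DO}(A)$. Under \meqref{eq:iso1} and \meqref{eq:iso2}, this element splits as $\alpha=\pi+\delta$, where $\pi\in\Hom((sA)^{\otimes 2},sA)$ is the arity-two element corresponding to $\mu$ and $\delta\in\Hom(sA,A)$ is the arity-one element corresponding to $d$, both of total degree $-1$. Since an arity-$n$ element of $\mathfrak{C}_{\Alg}(A)$ sits in degree $1-n$ and an arity-$(n-1)$ element of $\mathfrak{C}_{\DO}(A)$ sits in degree $-(n-1)$, the arity-graded decomposition of $\mathfrak{C}_{\DA}(A)$ matches $s\C^\bullet_\DA(A)$ summand by summand as a graded vector space.

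For $x=sf\in\mathfrak{C}_{\Alg}(A)$ of arity $n$, I will expand $\alpha^{\otimes i}=(\pi+\delta)^{\otimes i}$ inside the twisting formula \meqref{eq:twlinf}. Rule (v) forces $l_{1+i}(\alpha^{\otimes i}\otimes x)=0$ unless the total number of $\mathfrak{C}_{\Alg}$-entries is at most one; since $x$ already accounts for one, the only surviving expansion terms are $l_{1+i}(\delta^{\otimes i}\otimes x)$ for $1\leqslant i\leqslant n$, together with the extra term $l_2(\pi\otimes x)\in\mathfrak{C}_{\Alg}(A)$. Rule (i) identifies the latter with $[\pi,x]_G$, which by the classical Gerstenhaber fact recalled immediately after \meqref{eq:gers} equals $s\partial^n_\Alg f$. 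Unfolding the remaining contributions through rules (ii) and (iv) for $i=1$ and through rules (iii) and (iv) for $i\geqslant 2$, the $\lambda^{i-1}$ prefactor from rule (iii), the sum over $\sigma\in S_i$ (which yields a factor $i!$ as all copies of $\delta$ coincide), and the twisting coefficient $(-1)^{i+i(i-1)/2}/i!$ combine to produce, after desuspension by $s^{-1}$, exactly $-s\Phi^n(f)$: at $i=1$ the bracket $[sf,s\delta]_G$ splits into $sf\{s\delta\}$ (single insertions $\sum_j f(\ldots,d(a_j),\ldots)$) and $s\delta\{sf\}$ (the outer term $-d(f(\ldots))$), while each $i\geqslant 2$ bracket produces the multi-insertion $\lambda^{i-1}\sum_{j_1<\cdots<j_i}f(\ldots,d(a_{j_1}),\ldots,d(a_{j_i}),\ldots)$.

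For $y\in\mathfrak{C}_{\DO}(A)$, rule (v) again demands that $\alpha^{\otimes i}$ contain exactly one $\pi$, and the arity bound $m\leqslant\mathrm{arity}(\pi)=2$ in rule (iii) restricts the index to $i\in\{1,2\}$. The case $i=1$, handled by rule (ii), yields the bare Hochschild differential of $y$ viewed in the regular bimodule $A$; the case $i=2$ contributes $l_3(\pi\otimes\delta\otimes y)$ together with its positional variants handled by rule (iv), carrying the prefactor $\lambda$ from rule (iii). These two contributions are precisely the pieces which, relative to $A$, build $\partial^{n-1}_\DO y$ with respect to the twisted bimodule ${}_\vdash A_\dashv$ of \meqref{eq:newdifbim}; after sign bookkeeping the total equals $-s\partial^{n-1}_\DO y$. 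Combining both computations yields
\[l_1^\alpha(x,y)=\bigl(s\partial^n_\Alg x,\ -s\Phi^n x-s\partial^{n-1}_\DO y\bigr),\]
which matches $s\partial^n_\DA$ under \meqref{eq:linfdiff}, so the underlying complex $(\mathfrak{C}_{\DA}(A),l_1^\alpha)$ coincides with $s\C^\bullet_\DA(A)$.

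The main obstacle I anticipate is the sign bookkeeping. One must carefully reconcile the twisting signs $(-1)^{in+i(i-1)/2}/i!$ in \meqref{eq:twlinf}, the Koszul sign $(-1)^\xi$ of rule (iii), the various desuspensions $s^{-1}$ that shuttle between $\mathfrak{C}_{\Alg}(A)$ and $\mathfrak{C}_{\DO}(A)$, and the minus signs built into the definitions of $\partial_\DA$ and $\Phi$. Fortunately the arity bounds coming from rules (iii)--(v) truncate the \emph{a priori} infinite sum in \meqref{eq:twlinf} to a finite one whose combinatorics is manageable.
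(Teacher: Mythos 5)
Your proposal is correct and follows essentially the same route as the paper's proof: identify the Maurer--Cartan element $(m,\tau)$, observe that rules (iii)--(v) truncate the twisting sum to finitely many terms, and compute $l_1^\alpha$ on each summand, matching the Gerstenhaber bracket with $m$ to $\partial_{\Alg}$, the iterated $\tau$-insertions to $-\Phi$, and the arity-bounded $i\in\{1,2\}$ contributions to $\partial_{\DO}$ for the twisted bimodule ${}_\vdash A_\dashv$. The only caveat is the overall sign: the paper's explicit computation identifies $l_1^\alpha$ with $-\partial_{\DA}^n$ under the fixed isomorphisms rather than with $+s\partial_{\DA}^n$ as you assert, but this does not affect the identification of the underlying complex.
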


\begin{proof}
	By Proposition~\mref{pp:damc}, the differential  algebra structure $(A,\mu, d)$ is equivalent to a Maurer-Cartan element $\alpha=(m,\tau)$ in the $L_\infty$-algebra $\mathfrak{C}_{\DA}(A)$ with
$$m=-s\circ\mu\circ (s^{-1})^{\ot 2}: (sV)^{\ot 2}\rightarrow sV  \ \mathrm{and}\ \tau=d\circ s^{-1}: sV\rightarrow V.$$
 By Proposition~\mref{pp:deflinf}, the Maurer-Cartan element induces a new $L_\infty$-algebra structure $\{l_n^\alpha\}_{n\geqslant 1}$ on the graded space $\mathfrak{C}_{\DA}(A)$.

Given $(sf, g)\in\Hom((sA)^{\ot n}, sA)\oplus \Hom((sA)^{\ot n-1}, sA) \subset \mathfrak{C}_{\Alg}(A)\oplus \mathfrak{C}_{\DO}(A)=\mathfrak{C}_{\DA}(A)$, note that by the isomorphisms  \meqref{eq:iso1}  and \meqref{eq:iso2},
we get $(\widetilde{sf}, \check{g})\in \C_{\DA}^{n}(A)$.
By definition, for  $sf\in\Hom((sA)^{\ot n}, sA)\subset \mathfrak{C}_{\Alg}(A)$, we have
 {\small$$l_1^\alpha(sf)=\sum_{i=0}^\infty\frac{1}{i!}(-1)^{i+\frac{i(i-1)}{2}}l_{i+1}(\alpha^{\ot i}\ot sf)
=\Big(-l_2(m\ot sf),\  \sum_{i=1}^n\frac{1}{i!}(-1)^{ \frac{i(i+1)}{2}}l_{i+1}(\tau^{\ot i}\ot sf)\Big).	
$$}
The definition of $\{l_n\}_{n\geqslant 1}$ on $\mathfrak{C}_{\DA}(A)$ gives $-l_2(m\ot sf)=-[m,sf]_G$.
On the other hand, we have {\small\begin{align*}
		& \sum_{i=1}^n\frac{1}{i!}(-1)^{ \frac{i(i+1)}{2}}l_{i+1}(\tau^{\ot i}\ot sf)\\
		=& -l_{2}(\tau \ot sf)+\sum_{i=2}^{n}\frac{1}{i!}(-1)^{\frac{i(i+1)}{2}}l_{i+1}(\tau^{\ot i}\ot sf)\\
		\stackrel{\rm (iv)}{=}&-(-1)^{|f|} l_{2}(sf\ot \tau)+\sum_{i=2}^{n}\frac{1}{i!}(-1)^{\frac{i(i+1)}{2}} (-1)^{i|f| }l_{i+1}(sf\ot \tau^{\ot i}  )\\
		\stackrel{\rm (ii)(iii)  }{=} & s^{-1}[sf, s\tau]_G+\sum_{i=2}^n \frac{1}{i!}(-1)^{\frac{i(i+1)}{2}}(-1)^{i|f| } i! \lambda^{i-1} (-1)^{i(|f|+1)+\frac{i(i-1)}{2}}
s^{-1}(sf\{\underbrace{s\tau,\dots,s\tau}_{i}\})\\
		 = &s^{-1}[sf, s\tau]_G+ \sum_{i=2}^n   \lambda^{i-1}
s^{-1}(sf\{\underbrace{s\tau,\dots,s\tau}_{i}\}).
	\end{align*}}
	For  $g\in \Hom((sA)^{\ot (n-1)},A)\subset \mathfrak{C}_{\DO}(A)$, we have
	{\small\begin{align*}
		l_1^{\alpha}(g)=&\sum_{i=0}^\infty\frac{1}{i!}(-1)^{\frac{i(i+1)}{2}}l_{i+1}(\alpha^{\ot i}\ot  g)\\
		=&-l_2(m\ot g)-\frac{1}{2!}\Big(l_3(m\ot \tau\ot g)+l_3(\tau\ot m\ot g)\Big)\\
		=& s^{-1}[m, sg]+\lambda s^{-1} m\circ (s\tau \ot  sg)+\lambda s^{-1} m\circ (sg \ot  s\tau). \end{align*}}
We obtain
 {\small$$\begin{array}{rcl} l_1^\alpha (sf, g)
&=&\Big(-[m,sf]_G, s^{-1}[sf, s\tau]_G+ \sum_{i=2}^n   \lambda^{i-1}
s^{-1}(sf\{\underbrace{s\tau,\dots,s\tau}_{i}\})\\
& &  \quad +s^{-1}[m, sg]+\lambda s^{-1} m\circ (s\tau \ot  sg)+\lambda s^{-1} m\circ (sg \ot  s\tau)\Big),\end{array}$$}
	which is easily seen to be in
	 correspondence with  $- \partial_{\DA}^n(\widetilde{sf}, \check{g})$ via the  fixed isomorphisms \meqref{eq:iso1} and  \meqref{eq:iso2}. This completes the proof.
\end{proof}

Although $\mathfrak{C}_{\DA}(A)$ is an $L_\infty$-algebra,   once the associative algebra structure $\mu$ over $A$ is fixed, the graded space  $\mathfrak{C}_{\DO}(A)$ is indeed a differential graded Lie algebra, as shown by the following result. Moreover, this result justifies the cohomology theory of differential operators.

\begin{prop}\mlabel{pp:difopcochain}
	Let $(A,\mu)$ be an associative algebra. Then the graded space  $\mathfrak{C}_{\DO}(A)$ can be endowed with a dg Lie algebra structure, of which the Maurer-Cartan elements are in bijection with the differential operators   of weight $\lambda$ on $(A,\mu)$.

Given   a differential operator   $d$  of weight $\lambda$  on  the associative algebra $(A, \mu)$, the  dg Lie algebra $\mathfrak{C}_{\DO}(A)$ twisted by the Maurer-Cartan element corresponding to $d$ has its underlying complex being the cochain complex $\C^{\bullet}_\DO(A)$ of the differential operator $d$.
\end{prop}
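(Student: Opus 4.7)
The plan is to deduce all three assertions from the $L_\infty$-algebra of Theorem~\mref{th:linfdiff} by successive twisting. First, identify the associative product $\mu$ with the element $m = -s\circ\mu\circ(s^{-1})^{\otimes 2}\in \mathfrak{C}_{\Alg}(A)\subset \mathfrak{C}_{\DA}(A)$ of degree $-1$ under the isomorphism \meqref{eq:iso1}, and verify that $m$ is a Maurer--Cartan element of the ambient $L_\infty$-algebra $\mathfrak{C}_{\DA}(A)$. This is immediate: by clauses (i)--(v) defining $\{l_n\}$, each operation $l_n$ on $\mathfrak{C}_{\DA}(V)$ involves at most one entry from $\mathfrak{C}_{\Alg}(V)$, so $l_n(m^{\otimes n}) = 0$ for $n\neq 2$, and the only nontrivial term $l_2(m\ot m) = [m,m]_G$ vanishes precisely because $\mu$ is associative.

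Next, apply the twisting procedure (Proposition~\mref{pp:deflinf}) to obtain an $L_\infty$-structure $\{l_n^m\}_{n\geq 1}$ on $\mathfrak{C}_{\DA}(A)$. The key structural point is that this twisted structure restricts to the subspace $\mathfrak{C}_{\DO}(A)$ and collapses to a dg Lie algebra there. Indeed, $l_n^m(g_1 \ot \cdots \ot g_n)$ with $g_i\in \mathfrak{C}_{\DO}(A)$ is a sum of scalar multiples of terms $l_{n+i}(m^{\ot i}\ot g_1\ot\cdots\ot g_n)$; operations with two or more $\mathfrak{C}_{\Alg}$-entries vanish by clause (v), so only $i\in\{0,1\}$ contribute. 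Moreover, when $i=1$, the arity constraint $m\leq n$ in clause (iii) (with $sf=m$ of arity $2$) forces the number of $\mathfrak{C}_{\DO}$-entries to be at most $2$. Hence $l_n^m|_{\mathfrak{C}_{\DO}(A)^{\ot n}}=0$ for $n\geq 3$, leaving only a differential $l_1^m(g) = -l_2(m\ot g)$ and a bracket $l_2^m(g_1\ot g_2) = l_3(m\ot g_1\ot g_2)$, which assemble into a dg Lie algebra structure on $\mathfrak{C}_{\DO}(A)$.

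For the Maurer--Cartan correspondence, I invoke the standard shift principle, a direct consequence of formula~\meqref{eq:twlinf}: an element $\tau\in\mathfrak{C}_{\DO}(A)_{-1}$ is a Maurer--Cartan element of the twisted dg Lie algebra if and only if $m+\tau$ is a Maurer--Cartan element of the ambient $L_\infty$-algebra $\mathfrak{C}_{\DA}(A)$. By Proposition~\mref{pp:damc}, Maurer--Cartan elements of $\mathfrak{C}_{\DA}(A)$ are exactly the differential algebra structures of weight $\lambda$ on $A$, and those whose $\mathfrak{C}_{\Alg}$-component equals the prescribed $m$ correspond precisely to the differential operators of weight $\lambda$ on $(A,\mu)$.

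For the final complex identification, further twist the dg Lie algebra $\mathfrak{C}_{\DO}(A)$ by the Maurer--Cartan element $\tau=d\circ s^{-1}$ corresponding to $d$; iterated twisting is equal to twisting $\mathfrak{C}_{\DA}(A)$ by $\alpha=(m,\tau)$ in one step, so the resulting differential on the $\mathfrak{C}_{\DO}$-component is exactly the expression computed in the proof of Proposition~\mref{pp:infcohcomp}, namely
\[
g \longmapsto s^{-1}[m, sg]_G + \lambda\, s^{-1} m \circ (s\tau \ot sg) + \lambda\, s^{-1} m \circ (sg \ot s\tau).
\]
Under the isomorphism~\meqref{eq:iso2}, the first term recovers the Hochschild coboundary with coefficients in the regular bimodule, while the two $\lambda$-terms supply precisely the correction that replaces $A$ by the new bimodule ${}_\vdash A _\dashv$ of~\meqref{eq:newdifbim}; this yields $-\partial_{\DO}^{\bullet}$, completing the identification with $\C^\bullet_{\DO}(A)$. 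The main delicate point is tracking signs and suspensions through the twisting formula, but this has already been carried out in the proof of Proposition~\mref{pp:infcohcomp} and requires no new computation.
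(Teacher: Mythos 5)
Your proposal is correct and follows the same overall strategy as the paper's proof: twist the ambient $L_\infty$-algebra $\mathfrak{C}_{\DA}(A)$ by the Maurer--Cartan element $\beta=(m,0)$, observe that the twisted operations preserve $\mathfrak{C}_{\DO}(A)$ and vanish there for $n\geqslant 3$ because $m$ has arity $2$, and then identify the Maurer--Cartan elements and the twisted differential. The one step you handle differently is the Maurer--Cartan correspondence: the paper writes out $l_1^\beta(\tau)-\tfrac12 l_2^\beta(\tau\ot\tau)=0$ explicitly and matches it with the weight-$\lambda$ Leibniz identity, whereas you deduce it from the shift principle ($\tau$ is Maurer--Cartan in the twist if and only if $\beta+(0,\tau)$ is Maurer--Cartan in the ambient algebra) combined with Proposition~\ref{pp:damc}; this is cleaner and equally valid, though it leans on Proposition~\ref{pp:damc}, which the paper itself only justifies by appeal to general minimal-model facts, while the paper's computation is self-contained. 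Two cosmetic points: your opening claim that every $l_n$ involves at most one $\mathfrak{C}_{\Alg}$-entry fails for $l_2$ (clause (i) pairs two such entries), though your subsequent treatment of $l_2(m\ot m)=[m,m]_G$ handles exactly that case; and the twisted differential on the $\mathfrak{C}_{\DO}$-component corresponds to $+\partial_{\DO}$ rather than $-\partial_{\DO}$ (the $\DO$-component of $-\partial_{\DA}$ is $+\partial_{\DO}+\Phi$), an immaterial sign for identifying the underlying complex.
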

\begin{proof} Regard $A$ as a graded space concentrated in degree 0. Define $m=- s\circ \mu\circ (s^{-1}\ot s^{-1}): (sA)^{\ot 2}\rightarrow sA$. Then it is easy to see that   $\beta=(m,0)$ is naturally a Maurer-Cartan element in the $L_\infty$-algebra $\mathfrak{C}_{\DA}(A)$.  We apply the twisting procedure by $\beta$ on $\mathfrak{C}_{\DA}(A)$.
	By the construction of   $\{l_n\}_{\geqslant 1}$ on $\mathfrak{C}_{\DA}(A)$,    the graded subspace $\mathfrak{C}_{\DO}(A)$ is closed under the action of the operators $\{l_n^\beta\}_{n\geqslant 1}$.

 Since the arity of $m$ is 2,  the restriction of $l_n^\beta$ on $\mathfrak{C}_{\DO}(A)$ is $0$ for $n\geqslant 3$. Thus $(\mathfrak{C}_{\DO},\{l_n^\beta\}_{n=1,2})$ forms a dg Lie algebra which can be made explicit as follows. For $g\in \Hom((sA)^{\ot n},A)$ and $h\in \Hom((sA)^{\ot k},A)$, we have
	{\small\begin{align*}
		l_1^\beta(g)=&-l_2(m\ot g)=  s^{-1} [m, sg]_G,\\
		l_2^\beta(g\ot h)=&l_3(m\ot g\ot h)\\
		=& \lambda (-1)^{|g|} s^{-1} m\circ (sg\ot sh)+\lambda (-1)^{|g||h|+1+|h|} s^{-1} m\circ (sh\ot sg)\\
=& \lambda (-1)^{n} s^{-1} m\circ (sg\ot sh)+\lambda (-1)^{nk+k+1} s^{-1} m\circ (sh\ot sg).
	\end{align*}}
	
Since $A$ is concentrated in degree 0, we have $\mathfrak{C}_{\DO}(A)_{-1}=\Hom(sA,A)$. Take an element  $\tau\in \Hom(sA,A)$.  Then $\tau$ satisfies the  Maurer-Cartan equation:
$$l_1^\beta(\tau)-\frac{1}{2}l_2^{\beta}(\tau\ot \tau)=0$$
if and only if
$$   s^{-1}[m, s\tau]_G+\lambda\   s^{-1} m\circ (s\tau\ot s\tau)=0.$$
Define $d=\tau\circ s:A\rightarrow A$. Under the isomorphisms \meqref{eq:iso1} and   \meqref{eq:iso2},  the above equation is exactly the statement  that $d$ is a differential operator of weight $\lambda$ on the associative algebra $(A,\mu)$.

Now let $d$ be a differential operator of weight $\lambda$ on the associative algebra $(A,\mu)$ and $\tau=d\circ s^{-1}$ be the corresponding Maurer-Cartan element in the dg Lie algebra $(\mathfrak{C}_{\DO},\{l_n^\beta\}_{n=1,2})$. For $g\in \Hom((sA)^{\ot n},A)$, a direct computation shows that
$\partial_\DO(\check{g})$ corresponds to $(l_1^\beta)^\tau(g)$ via the isomorphism \meqref{eq:iso2}. This proves the last statement.
\end{proof}

\begin{remark} In the course of proving the above proposition, the Lie bracket $(l_2^\beta)^\tau$ on  $\C^{\bullet}_\DO(A)$ can also be given explicitly:
    for $f\in \C^n_\DO(A)$ and $g\in \C^k_\DO(A)$,
$$[f, g](a_{1, n+k})=  (-1)^n \lambda f(a_{1, n})g(a_{n+1, n+k})+ (-1)^{nk+k+1} \lambda g(a_{1, k})f(a_{k+1, n+k}),\quad a_1, \dots, a_{n+k}\in A.$$
  Note that when $\lambda=0$, the cochain complex $\C^{\bullet}_\DO(A)$ has the trivial Lie bracket.
\end{remark}

\section{The Koszul dual homotopy cooperad}\mlabel{sec:dualhomocoop}

In this section, we will  construct a homotopy cooperad $ \Dif^\ac$, which will be called the Koszul dual homotopy cooperad of the operad $\Dif$ of differential algebras with weight $\lambda$. In fact,   we will show that  the  cobar construction of $ \Dif^\ac$  is  the minimal model of $\Dif$  (Theorem~\mref{thm:difmodel}) which justifies the name.

\subsection{Homotopy  (co)operads and $L_\infty$-algebras}\mlabel{ss:homocood}\

We first collect needed background on nonsymmetric homotopy  (co)operads that are scattered in several references \mcite{DP16, Mar96, MV09a, MV09b}. We  also  explain how $L_\infty$-structures can be obtained from homotopy operads, in particular from convolution homotopy operads.

A graded collection is a family $\calo=\{\calo(n)\}_{n\geqslant 1}$ of graded space indexed by positive integers. So each $\calo(n)$ itself is a graded space. Elements in $\calo(n)$ are said to have arity $n$. The suspension of $\calo$, denoted by $s\calo$ is defined to be the graded collection $\{s\calo(n)\}_{n\geqslant 1}$. The desuspension $s^{-1}\calo$ of the graded collection $\calo$ is defined in the same way.

We also recall preliminaries about trees. As we  only consider planar reduced rooted trees, we will simply call them trees.
For a tree $T\in \mathfrak{T}$,  denote the weight (=number of vertices)  and arity (=number of leaves)  of $T$ by $\omega(T)$ and $\alpha(T)$  respectively. Let $\mathfrak{T}$ be the set of all   trees with weight $\geqslant 1$ and, for  $n\geqslant 1$, let $\mathfrak{T}^{(n)}$ be the set of trees of weight $n$.
Since the trees are planar, the inputs of each vertex in a tree have total order by going clockwise.
Furthermore by the existence of the root, there is a naturally induced total order on the set of all    vertices of the tree $T$, given by counting the   vertices  starting from the root clockwise along the tree. We call this order the \name{planar order}.

Let $T'$ be a divisor of $T$. Let $T/T'$ denote the tree obtained from $T$ in replacing $T'$ by a corolla (a tree with only one vertex)  of arity $\alpha(T')$. There is a natural permutation $\sigma=\sigma(T, T')\in \rmS_{\omega(T)}$ associated to the pair  $(T, T')$ defined as follows.  Let $\{v_1<\dots<v_n\}$ be the sequence of all vertices of $T$ listed in the planar order and let $\omega(T')=j$. Let $v'$ be the vertex in $T/T'$ corresponding to the divisor $T'$ of $T$ and let $i$ be  the serial number of $v'$ in $T/T'$ in the planar order, that is, there are $i-1$ vertices before $T'$. Then define $\sigma=\sigma(T, T')\in \rmS_{n}$ to be the unique permutation with the properties that it does not permute the vertices $v_1, \dots, v_{i-1}$, the  ordered set $\{v_{\sigma(i)}<\dots<v_{\sigma(i+j-1)}\}$ is exactly the planar ordered set of all vertices  of $T'$, and the ordered set $\{v_{1}<\dots<v_{i-1}<v'<v_{\sigma(i+j)}<\dots< v_{\sigma(n)}\}$ is exactly the  planar ordered   set of all vertices  in the tree $T/T'$.

Let $\calp=\{\calp(n)\}_{n\geqslant 1}$ be a graded collection, let  $T\in \mathfrak{T}$ and let $\{v_1<\dots<v_n\}$ be the sequence of all vertices  of $T$ in the planar order. Let $\calp^{\otimes T}$ denote   $\calp(\alpha(v_1))\ot \cdots \ot \calp(\alpha(v_n))$. Its element can be pictured as a linear combination of  the tree $T$ with each vertex  $v_i$ decorated by an element  of  $\calp(\alpha(v_i))$.

\begin{defn} \mlabel{de:homood}
A \name{homotopy operad structure} on a graded collection $\calp=\{\calp(n)\}_{n\geqslant 1}$ is a family of operations $$\{m_T: \calp^{\ot T}\rightarrow \calp(\alpha(T))\}_{T\in \mathfrak{T}}$$ with $|m_T|=\omega(T)-2$, such that, for all $T\in \mathfrak{T}$, the equation	
\[\sum\limits_{T'\subset{T}}(-1)^{i-1+jk}\sgn(\sigma(T,T'))\ m_{T/T'}\circ(\id^{\ot {i-1}}\ot m_{T'}\ot \id^{\ot k})\circ r_{\sigma(T,T')}=0\]
holds. Here $T'$ runs through subtrees of $T$,  $i$ is the serial number of the vertex $v'$ in $T/T'$ as above, $j=\omega(T')$, $k=\omega(T)-i-j$, and
	  $r_\sigma$ denotes the right action by $\sigma=\sigma(T,T')$:
$$r_\sigma(x_1\ot \cdots\ot x_n)=\varepsilon(\sigma; x_1,\dots, x_{n})x_{\sigma(1)}\ot \cdots  \ot x_{\sigma(n)}.$$	
\end{defn}

A \name{strict morphism} between two homotopy operads is a morphism of graded collections that is compatible with all operations $m_T, T\in \mathfrak T$.

 Let $\cali$ be the collection with $\cali(1)=k$ and $\cali(n)=0$ for $n\ne1$. The collection $\cali$ can be naturally endowed with a homotopy operad structure: when $T$ is the tree with two vertices and a unique leaf, then $m_T: \cali(1)\ot \cali(1)\to \cali(1)$ is given by the identity; for other trees $T$, $m_T$ vanish.

A homotopy operad $\calp$ is called \name{strictly unital} if there exists a strict morphism of homotopy operads $\eta: \cali\rightarrow \calp$ such that, for $n\geqslant 1$,  the compositions  $$\calp(n)\cong \calp(n)\ot \cali(1)\xrightarrow{\id\ot \eta}\calp(n)\ot \calp(1)\xrightarrow{m_{T_{1, i}}}\calp(n)$$ and
$$\calp(n)\cong \cali(1)\ot \calp(n)\xrightarrow{\eta\ot \id}\calp(1)\ot \calp(n)\xrightarrow{m_{T_2}} \calp(n)$$ are identity maps on $\calp(n)$, where for $1\leqslant i\leqslant n$, $T_{1,i}$ is the tree of weight $2$, arity $n$, with its second vertex having arity $1$ and connecting to the first vertex on its $i$-th leaf; and $T_2$ is the tree of weight $2$, arity $n$ whose first vertex has arity $1$. Moreover,
for a tree $T$ of weight at least three, $m_T(\Id^{\ot i-1}\ot \eta\ot \Id^{\ot \omega(T)-i})=0$ for each $1\leqslant i\leqslant \omega(T)$.

Also a homotopy operad $\calp$ is called \name{augmented} if there is a strict morphism of homotopy operads $\varepsilon: \calp\rightarrow \cali$ such that $\varepsilon \circ \eta=\id_{\cali}$.

Given a dg operad $\calp$, for each tree $T$, one can define  the composition $m_\calp^T:\calp^{\ot T}\to \calp(\alpha(T))$ in $\calp$ along  $T$ as follows:  for $\omega(T)=1$, set $m_\calp^T:=\Id$; for  $\omega(T)=2$, set $m_\calp^T:=m_T$;
 for $\omega(T)\geqslant 3$, write $T$ as the grafting  of a subtree $T'$,   whose vertex set is that of $T$ except the last one, with the corolla whose unique vertex is just the last vertex of $T$ in the planar order, then set
 $m_\calp^T:=m_{T/T'}\circ (m_\calp^{T'}\ot \Id) $, where $m_\calp^{T'}$ is obtained by induction.

Dualizing the definition of homotopy operads, one has the notion of homotopy cooperads.

\begin{defn}Let $\calc=\{\calc(n)\}_{n\geqslant 1}$ be a graded collection. A \name{homotopy cooperad structure} on $\calc$ consists of a family of operations $\{\Delta_T: \calc(\alpha(T))\rightarrow \calc^{\ot T }\}_{T\in\mathfrak{T}}$ with $|\Delta_T|=\omega(T)-2$ such that for $c\in \calc$, $\Delta_T(c)=0$ for all but finitely many $T\in\frakt$, and  the family of operations $\{\Delta_T\}_{T\in\frakt}$ satisfies the identity
$$\sum_{T'\subset T}\sgn(\sigma(T,T')^{-1})(-1)^{i-1+jk} r_{\sigma(T,T')^{-1}}\circ (\id^{\ot i-1}\ot \Delta_{T'}\ot \id^{\ot k})\circ  \Delta_{T/T'}=0$$
for all $T\in\frakt$, where  $T', i, j, k$ have the same meaning as for  homotopy operads.
\end{defn}

The graded collection   $\cali$ has a natural  homotopy cooperad structure. It is obtained by taking $\Delta_T: \cali(1)\to  \cali(1)\ot \cali(1)$ to be the identity when $T$ is the tree with two vertices and one unique leaf, and $\Delta_T=0$ for other $T$.
Dualizing the notions of being strictly unital and being augmented gives the notions of being \name{strictly counital and coaugmented}.
For a coaugmented homotopy cooperad $\calc$, the graded collection $\overline{\calc}=\ker(\varepsilon)$ endowed with operations $\{\overline{\Delta}_T\}_{T\in\frakt}$ is naturally a homotopy cooperad, where $\overline{\Delta}_T$ is the  the restriction of operation $\Delta_T$ on $\overline{\calc}$.

A homotopy cooperad  $\cale=\{\cale(n)\}_{n\geqslant 1}$  such that   $\{\Delta_T\}$ vanish for all $\omega(T)\geqslant 3$ is  exactly a noncounital dg cooperad in the sense of Markl \mcite{Mar08}.

For a (noncounital) dg cooperad $\cale$, one can define  the cocomposition $\Delta^{T}_\cale:\cale(\alpha(T))\to \cale^{\ot T}$ along a tree $T$ in duality of the composition $m^T_\calp$ along $T$ for a dg operad $\calp$.

\begin{prop-def} Let $\calc$ be a homotopy cooperad and $\cale$ be a dg cooperad. Then the graded collection $\calc\ot \cale$ with $(\calc\ot\cale)(n):=\calc(n)\ot \cale(n), n\geqslant 1$, has a natural structure of homotopy cooperad as follows:
	\begin{enumerate}
		\item For a tree $T\in \frakt$ of weight $1$ and arity $n$, define  $$\Delta_T^H(c\ot e):=\Delta_T^\calc(c)\ot e+(-1)^{|c|}c\ot d_{\cale}e$$ for homogeneous elements $c\in \calc(n), e\in \cale(n)$;
		\item For a tree $T$ of weight $n\geqslant 2$, define
		$$\Delta_T^H(c\ot e):=(-1)^{\sum\limits_{k=1}^{n-1}\sum_{j=k+1}^n|e_k||c_j|}(c_1\ot e_1)\ot \cdots \ot (c_n\ot e_n)\in (\calc\ot \cale )^{\ot T},$$
		with $c_1\ot \cdots\ot c_n=\Delta_T^\calc(c)\in \calc^{\ot T}$  and $e_1\ot \cdots \ot e_n=\Delta^T_\cale(e)\in \cale^{\ot T}$, where $\Delta^T_\cale$ is the cocomposition  in $\cale$  along   $T$.
	\end{enumerate}
The new homotopy cooperad  is called the \name{Hadamard product} of $\calc$ and $\cale$,   denoted by $\calc\ot_{\rmH}\cale$.
	\end{prop-def}

Define $\cals:=\mathrm{End}_{\bfk s}^c$ to be the graded cooperad whose underlying graded collection is given by $\cals(n)=\Hom((\bfk s)^{\ot n},\bfk s),n\geqslant 1$. Denote $\delta_n$ to be the map in $\cals(n)$ sending $s^{\ot n}$ to $s$. The cooperad structure is given by
$$\Delta_T(\delta_n):=(-1)^{(i-1)(j-1)}\delta_{n-j+1}\ot \delta_j\in \cals^{\ot T} $$
 for a tree $T$ of weight $2$ whose second vertex is connected with the $i$-th  leaf of its first vertex. We also define $\cals^{-1}$ to be the graded cooperad whose underlying graded collection is $\cals^{-1}(n)=\Hom((\bfk s^{-1})^{\ot n},s^{-1})$ for all $n\geqslant 1$ and the cooperad structure is given by
 $$\Delta_T(\varepsilon_n):=(-1)^{(j-1)(n-i+1-j)}\varepsilon_{n-j+1}\ot \varepsilon_j\in (\cals^{-1})^{\ot T},$$
 where $\varepsilon_n\in \cals^{-1}(n)$ is the map which takes $(s^{-1})^{\ot n}$ to $s^{-1}$ and $T$ is the same as before.
 It is easy to see that $\cals\ot_{\mathrm{H}}\cals^{-1}\cong \cals^{-1}\ot_{\mathrm{H}}\cals=:\mathbf{As}^\vee$. Notice that for a homotopy cooperad $\calc$, we have $ \calc\ot_\rmH \mathbf{As}^\vee\cong\calc\cong \mathbf{As}^\vee\ot_\rmH \calc.$

Let $\calc$ be a homotopy cooperad. Define the \name{operadic suspension} (resp. \name{desuspension})  of $\calc$ to be the homotopy cooperad $\calc\ot _{\mathrm{H}} \cals$ (resp. $\calc\ot_{\mathrm{H}}\cals^{-1}$),  denoted as $\mathscr{S}\calc$ (resp. $\mathscr{S}^{-1}\calc$). For cooperads, we have

\begin{defn}\mlabel{Def: cobar construction}
	Let $\calc=\{\calc(n)\}_{n\geqslant 1}$ be a coaugmented homotopy cooperad. The \name{cobar construction} of $\calc$, denoted by $\Omega\calc$,  is the free graded operad generated by the graded collection $s^{-1}\overline{\calc}$, endowed with the differential $\partial$ which is lifted from
	$$\partial: s^{-1}\overline{\calc}\to \Omega\calc, \ \partial(s^{-1}f):=-\sum_{T\in \mathfrak{T} } (s^{-1})^{\otimes \omega(T)} \circ \Delta_T(f) \tforall f\in \overline{\calc}(n).$$
 \end{defn}

This provides an alternative definition for   homotopy cooperads. In fact, a   graded collection  $\overline{\calc}=\{\overline{\calc}(n)\}_{n\geqslant 1}$ is a     homotopy cooperad if and only if   the free graded operad generated by $s^{-1}\overline{\calc}$ (also called the cobar construction of $\calc=\overline{C}\oplus  \cali$) is endowed with a differential such that it becomes a dg operad.

A natural  $L_\infty$-algebra  is associated with a   homotopy operad $\calp=\{\calp(n)\}_{n\geqslant 1}$.
 Denote $\calp^{\prod}:= \prod\limits_{n=1}^\infty\calp(n)$. For each $n\geqslant 1$, define operations $m_n=\sum\limits_{T\in \mathfrak{T}^{(n)}}m_T: (\calp^{\prod})^{\ot n}\to \calp^{\prod} $
  and define $l_n$ to be the anti-symmetrization of $m_n$, that is, $$l_n(x_1\ot \cdots \ot x_n):=\sum\limits_{\sigma\in \rmS_n}\chi(\sigma; x_1,\dots, x_n)m_n(x_{\sigma(1)}\ot \cdots \ot x_{\sigma(n)}).$$
The we have the following conclusion.
\begin{prop}\mlabel{pp:linfhomood}
	Let $\calp$ be a homotopy operad. Then $(\calp^{\prod}, \{l_n\}_{n\geqslant 1})$ is an $L_\infty$-algebra.
\end{prop}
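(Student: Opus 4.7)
The plan is to verify the two defining conditions of Definition~\ref{Def:L-infty}: the degree condition with generalized anti-symmetry, and the generalized Jacobi identity. The degree condition $|l_n|=n-2$ is immediate, since every $T\in\mathfrak{T}^{(n)}$ has $\omega(T)=n$ and thus $|m_T|=n-2$; generalized anti-symmetry is built into the very definition of $l_n$ as the signed anti-symmetrization of $m_n$.

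The substantive content is the Jacobi identity. I would expand each term
\[
l_{n-i+1}\bigl(l_i(x_{\sigma(1)}\ot\cdots\ot x_{\sigma(i)})\ot x_{\sigma(i+1)}\ot\cdots\ot x_{\sigma(n)}\bigr)
\]
by inserting the definitions $l_i=\mathrm{Antisym}\bigl(\sum_{T'\in\mathfrak{T}^{(i)}}m_{T'}\bigr)$ and $l_{n-i+1}=\mathrm{Antisym}\bigl(\sum_{T''\in\mathfrak{T}^{(n-i+1)}}m_{T''}\bigr)$. A composite of the form $m_{T''}\circ(\id^{\ot p}\ot m_{T'}\ot\id^{\ot q})$ is precisely $m_{T/T'}\circ(\id^{\ot p}\ot m_{T'}\ot\id^{\ot q})$ for the unique tree $T\in\mathfrak{T}^{(n)}$ obtained by grafting $T'$ onto the appropriate leaf of $T''$. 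Allowing $l_i$ to be inserted in any of the $n-i+1$ slots of $l_{n-i+1}$ realizes grafting at every possible leaf, so after summing the expanded expression over $i$, over shuffles $\sigma\in\mathrm{Sh}(i,n-i)$, and over the trees $T',T''$, the pairs that appear are in bijection with pairs $(T,T')$ with $T\in\mathfrak{T}^{(n)}$ and $T'\subset T$.

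Regrouping the sum by the resulting tree $T$ and using the compatibility of the Koszul signs in $r_{\sigma(T,T')}$ with those coming from the shuffle $\sigma$, the total coefficient of each $T$ reduces to the outer anti-symmetrization applied to
\[
\sum_{T'\subset T}(-1)^{i-1+jk}\,\sgn(\sigma(T,T'))\,m_{T/T'}\circ(\id^{\ot i-1}\ot m_{T'}\ot\id^{\ot k})\circ r_{\sigma(T,T')},
\]
which vanishes by the defining relation of a homotopy operad in Definition~\ref{de:homood}. Thus the generalized Jacobi identity holds tree by tree.

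The main obstacle is purely combinatorial: matching the $(i,n-i)$-shuffle sum and the sign $(-1)^{i(n-i)}$ appearing in the Jacobi identity with the vertex permutation $\sigma(T,T')\in S_{\omega(T)}$ and the sign $(-1)^{i-1+jk}$ appearing in the homotopy operad axiom, while keeping track of the Koszul factors $\chi(\sigma;x_1,\dots,x_n)$ and $\epsilon(\sigma;\cdots)$ produced by anti-symmetrization. This is the same sign bookkeeping that underlies the classical fact that an operad yields a graded Lie algebra by anti-symmetrization, extended to the homotopy setting as in~\cite{VdL02,VdL03}; to keep the argument short I would, after laying out the expansion above, invoke that literature for the final sign verification once our conventions are shown to agree.
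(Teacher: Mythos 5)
The paper does not actually prove this proposition: it is stated as quoted background, with the reader referred to \cite{DP16, Mar96, MV09a, MV09b} (and to \cite{KS00,VdL02,VdL03} for the $L_\infty$-from-homotopy-operads mechanism). So there is no in-paper proof to compare against, and your sketch is already more detailed than what the paper supplies.

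Your outline is the standard argument and is sound in structure: the degree and anti-symmetry claims are immediate, and the generalized Jacobi identity does reduce, after expanding the antisymmetrizations and regrouping by the composite tree, to the defining relation of a homotopy operad applied tree by tree. Two remarks. First, a terminological slip worth fixing: the output of $m_{T'}$ is substituted for a \emph{vertex} of $T''$, not grafted onto a leaf --- the tensor factors of $\calp^{\ot T''}$ are indexed by the vertices of $T''$, so inserting $l_i$ into one of the $n-i+1$ slots of $l_{n-i+1}$ corresponds to collapsing $T'$ to a vertex, and the composite tree then has $\omega(T'')+\omega(T')-1=n$ vertices as required (grafting at a leaf would give $n+1$). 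The arity constraint $\alpha(v)=\alpha(T')$ needed for the substitution is enforced automatically, since an element of $\calp^{\prod}$ concentrated in arity $\alpha(T')$ contributes zero at any vertex of different arity; with that observation the bijection between triples $(T'',T',\text{vertex})$ and pairs $(T, T'\subset T)$ is correct. Second, the step you defer to \cite{VdL02,VdL03} --- reconciling the shuffle sum, the sign $(-1)^{i(n-i)}$ and the Koszul factors $\chi(\sigma;\dots)$ with $\sgn(\sigma(T,T'))$, $(-1)^{i-1+jk}$ and $r_{\sigma(T,T')}$ --- is the only nontrivial content of the statement; deferring it is defensible for a background result and is consistent with the paper's own treatment, but be aware that your write-up then establishes a matching of terms without itself verifying that they cancel.
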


For a homotopy operad $\calp$ satisfying $m_T=0$ for all $T\in \frakt$ with $\omega(T)\geqslant 3$, the operad $\calp$ reduces to the nonunital dg operad in the sense of Markl \mcite{Mar08} and $\calp^{\prod}$ is just a dg Lie algebra. The operation $l_2$ in the dg Lie algebra $\calp^{\prod}$ recovers the Gerstenhaber bracket defined in \eqref{eq:gers}.

\begin{defn}\label{Deninition: brace operation}Let $\calp$ be a (nonunital) dg operad.
 For  $f\in \calp(m)$ and $g_1\in\calp(l_1),\dots,g_n\in \calp(l_n) $ with $1\leqslant n\leqslant m$, define
\[f\{g_1,\dots,g_n\}:=\sum_{i_j\geqslant l_{j-1}+i_{j-1},n\geqslant j\geqslant 2, i_1\geqslant 1}\Big(\big((f\circ_{i_1} g_1)\circ_{i_2}g_2\big)\dots\Big)\circ_{i_n}g_n,\]
called the \name{brace operation} on $\calp^{\prod}$. For $f\in \calp(m)$ and $g\in \calp(m)$, define
\[[f,g]_{G}:=f\{g\}-(-1)^{|f||g|}g\{f\}\in \calp(m+n-1).\]
It is called the \name{Gerstenhaber bracket} of $f$ and $g$.
\end{defn}

The brace operation in a dg operad $\calp$ satisfies the following pre-Jacobi identity:
\begin{prop}\mcite{Ge1, GV95, Get93}
For  homogeneous elements $f, g_1,\dots, g_m,  h_1,\dots,h_n$ in $\calp^{\prod}$, we have
\begin{eqnarray}
	\mlabel{Eq: pre-jacobi}	&&\Big(f \{g_1,\dots,g_m\}\Big)\{h_1,\dots,h_n\}=\\
			\notag &&\quad  \sum\limits_{0\leqslant i_1\leqslant j_1\leqslant i_2\leqslant j_2\leqslant \dots \leqslant i_m\leqslant  j_m\leqslant n}(-1)^{\sum\limits_{k=1}^m(|g_k|)(\sum\limits_{j=1}^{i_k}(|h_j|))}
			f\{h_{1, i_1},  g_1\{h_{i_1+1, j_1}\},\dots,   g_m\{h_{i_m+1, j_m} \}, h_{j_m+1,  n}\}.
		\end{eqnarray}	
\end{prop}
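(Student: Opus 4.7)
The plan is to prove this identity by a direct term-by-term comparison, interpreting both sides as sums over the same combinatorial set of labelled insertion patterns of the $g_k$'s and $h_j$'s into the inputs of $f$, and then checking that the Koszul signs match. Unwinding the inner brace on the left, the expression $f\{g_1,\dots,g_m\}$ is a sum of iterated partial compositions $\big((f\circ_{i_1}g_1)\circ_{i_2}g_2\big)\cdots \circ_{i_m}g_m$ over increasing indices as in the definition. Applying the outer brace $\{h_1,\dots,h_n\}$ to each such term then inserts the $h_j$'s, in order, at distinct input slots of the resulting tree. Every input slot lies either at a leaf of $f$ that is not below any $g_k$, or at a leaf inside a unique $g_k$, so each term on the left is indexed by a pair: the insertion points of the $g_k$'s into $f$, together with a partition of $\{1,\dots,n\}$ into $2m+1$ ordered blocks recording where each $h_j$ lands.

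I would then identify this data with the indexing set on the right. Writing $i_k$ for the number of $h_j$'s that fall into $f$-slots located before (the image of) $g_k$, and $j_k$ for the number of $h_j$'s that fall before or inside $g_k$, the constraints $0\leq i_1\leq j_1\leq \cdots \leq i_m\leq j_m\leq n$ are exactly the conditions on the right-hand sum. Under this bijection, the $h_j$'s landing strictly inside $g_k$ reassemble, by the definition of brace, into $g_k\{h_{i_k+1,j_k}\}$, while the remaining $h_j$'s are inserted into $f$ between the $g_k$ blocks; collecting these gives precisely one brace operation of $f$ on the sequence $h_{1,i_1},\, g_1\{h_{i_1+1,j_1}\},\, h_{j_1+1,i_2},\, \dots,\, g_m\{h_{i_m+1,j_m}\},\, h_{j_m+1,n}$ as on the right.

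The main obstacle is sign bookkeeping. On the left the signs come from Koszul rearrangements introduced each time an $h_j$ is inserted past the $g_k$'s already placed before it, while on the right the intrinsic Koszul signs of the outer brace $f\{\cdots\}$ together with those of the inner braces $g_k\{h_{i_k+1,j_k}\}$ must combine to produce the claimed global factor $(-1)^{\sum_{k=1}^m |g_k|(\sum_{j=1}^{i_k}|h_j|)}$. The key point is that, for fixed $k$, the $g_k$ symbol must be permuted past exactly those $h_j$'s that appear to its left in the canonical ordering of the outer brace; these are precisely the $h_j$ with $j\leq i_k$. All other Koszul contributions cancel against the positional signs in the definition of $\circ_i$, leaving only the stated global sign.

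An alternative that I would pursue if the global matching becomes cumbersome is induction on $m$. The case $m=1$ is the classical pre-Jacobi identity for a single brace $f\{g\}\{h_1,\dots,h_n\}$, which follows from the sequential axiom of partial composition $\circ_i$ in the dg operad $\calp$ by distinguishing whether each $h_j$ is inserted into $f$ or into $g$. For the inductive step, one writes $f\{g_1,\dots,g_m\}=\big(f\{g_1,\dots,g_{m-1}\}\big)\{g_m\}+(\text{terms with }g_m\text{ inserted into earlier }g_k)$, applies the $m=1$ identity to the outer brace with $h_1,\dots,h_n$, and then invokes the induction hypothesis to rewrite the left factor; matching the resulting index ranges with the constraint $0\leq i_1\leq j_1\leq \cdots \leq j_m\leq n$ completes the argument.
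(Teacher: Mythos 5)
The paper does not actually prove this proposition --- it is quoted from the classical literature (Gerstenhaber, Gerstenhaber--Voronov, Getzler), so there is no in-paper argument to compare against. Judged on its own, your outline is the standard and correct proof: both sides expand to sums over the same data, namely a choice of insertion slots for $g_1,\dots,g_m$ into $f$ together with an assignment of each $h_j$ either to an $f$-slot between consecutive $g$'s or to a slot inside some $g_k$, and the monotonicity of insertion positions forces the $h_j$'s landing in $g_k$ to form the consecutive block $h_{i_k+1},\dots,h_{j_k}$, which is exactly the constraint $0\leqslant i_1\leqslant j_1\leqslant\cdots\leqslant i_m\leqslant j_m\leqslant n$. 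Your identification of the sign is also right, though the sentence about contributions cancelling ``against the positional signs in the definition of $\circ_i$'' is imprecise: with the paper's sign-free definition of the brace, the \emph{only} source of signs is the parallel-composition axiom $(f\circ_i g)\circ_j h=(-1)^{|g||h|}(f\circ_{j'} h)\circ_i g$ for disjoint slots. Writing each side as one long iterated partial composition and reordering the factors so that all $g$'s precede all $h$'s, each $g_k$ must transpose past exactly the $h_j$ with $j\leqslant i_k$ --- note this correctly includes the $h_j$ nested inside an earlier $g_{k'}$, since $j\leqslant j_{k'}\leqslant i_k$ for $k'<k$ --- yielding the global factor $(-1)^{\sum_k|g_k|\sum_{j\leqslant i_k}|h_j|}$ with nothing left over to cancel. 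Your fallback induction on $m$, reducing to the sequential and parallel operad axioms in the case $m=1$, is also viable. To turn the sketch into a complete proof you would only need to carry out the reordering computation explicitly rather than asserting it.
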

In particular, the following equation holds.
	\begin{eqnarray}
			\mlabel{Eq: pre-jacobi1}
(f \{g\})\{h\}=f\{g\{h\}\}+f\{g, h\}+(-1)^{|g||h|}f\{h, g\}.
\end{eqnarray}

 Next we introduce the notion of  convolution homotopy operads.
\begin{prop-def}\mlabel{Prop: convolution homotopy operad}
	Let $\calc$ be a homotopy cooperad and $\calp$ be a dg operad. Then the graded collection $\mathbf{Hom}(\calc,\calp)=\{\Hom(\calc(n), \calp(n))\}_{n\geqslant 1}$ has a natural homotopy operad structure as follows.
	 \begin{enumerate}
	 	\item For  $T\in \mathfrak{T}$ with $\omega(T)=1$, $$m_T(f):=d_{\calp}\circ f-(-1)^{|f|}f\circ \Delta_T^\calc, \  f\in\mathbf{Hom}(\calc,\calp)(n).$$
	 	\item For  $T\in \mathfrak{T}$ with $\omega(T)\geqslant 2$, $$m_T(f_1\ot \cdots \ot f_r):=(-1)^{\frac{r(r-1)}{2}+1+r\sum_{i=1}^r |f_i|}m_{\calp}^T\circ(f_1\ot \cdots\ot f_r)\circ \Delta_T^\calc,$$  where $m_\calp^T$ is the composition in $\calp$ along $T$.
	 \end{enumerate}

 This homotopy operad is called the \textbf{convolution homotopy cooperad}.
\end{prop-def}

 The following result explains the Maurer-Cartan elements of the  $L_\infty$-algebra associated to a convolution homotopy operad.
\begin{prop}\mlabel{Prop: Linfinity give MC}
	Let $\calc$ be a coaugmented homotopy cooperad and $\calp$ be a unital dg operad. Then there is a natural bijection
	\[\Hom_{udgOp}(\Omega\calc, \calp)\cong \calm\calc\Big(\mathbf{Hom}(\overline{\calc},\calp)^{\prod}\Big)\]
between the set of morphisms of unital dg operads from $\Omega C$ to $\calp$ and the set of Maurer-Cartan elements in the $L_\infty$-algebra $\mathbf{Hom}(\overline{\calc},\calp)^{\prod}$.
\end{prop}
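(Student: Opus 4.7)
The natural plan is to adapt the standard bar-cobar adjunction argument to the homotopy setting, proceeding in three steps: identify the underlying sets as sets of graded maps, translate the differential compatibility into an equation on generators, and recognize that equation as the Maurer-Cartan equation in the convolution homotopy operad's associated $L_\infty$-algebra.

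First I would observe that, since $\Omega\calc$ is by definition the free graded operad generated by $s^{-1}\overline{\calc}$, a morphism of unital graded operads $\Phi\colon \Omega\calc\to\calp$ is uniquely determined by, and uniquely determined from, its restriction $\tilde\alpha\colon s^{-1}\overline{\calc}\to\calp$ to the generating collection; unitality is automatic on $\cali$. Via desuspension this data is equivalent to a degree $-1$ element $\alpha\in\mathbf{Hom}(\overline{\calc},\calp)^{\prod}$, given by $\alpha(c) := \tilde\alpha(s^{-1}c)$ up to a standard Koszul sign. This is precisely the degree required for a Maurer-Cartan element in the $L_\infty$-algebra coming from the convolution homotopy operad (Proposition \ref{pp:linfhomood}).

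Next I would translate the condition that $\Phi$ commutes with the differentials. Because $\partial_{\Omega\calc}$ and $d_\calp$ are derivations and $\Phi$ is an operad map, the identity $d_\calp\circ \Phi = \Phi\circ \partial_{\Omega\calc}$ reduces to its restriction to $s^{-1}\overline{\calc}$. On a generator $s^{-1}f$ with $f\in\overline{\calc}$, Definition \ref{Def: cobar construction} gives
\[
\Phi(\partial(s^{-1}f)) \;=\; -\sum_{T\in\mathfrak{T}}\Phi\bigl((s^{-1})^{\otimes\omega(T)}\circ\Delta_T(f)\bigr),
\]
and, after pulling the desuspensions across, the right-hand side becomes a signed sum of compositions $m_\calp^{T}(\alpha\otimes\cdots\otimes\alpha)\circ\Delta_T^\calc(f)$. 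Matching this term by term against the formulas for $m_T$ in Proposition-Definition \ref{Prop: convolution homotopy operad}, the equation $d_\calp\circ \Phi = \Phi\circ\partial_{\Omega\calc}$ on $s^{-1}f$ becomes the statement that $\sum_{T\in\mathfrak{T}} m_T(\alpha^{\otimes\omega(T)})(f)$ vanishes, with appropriate global signs. Because $\alpha$ has odd degree $-1$, the Koszul sign $\chi(\sigma;\alpha,\dots,\alpha)$ equals $1$ for every $\sigma\in S_n$, so the antisymmetrization is trivial and $l_n(\alpha^{\otimes n}) = n!\, m_n(\alpha^{\otimes n})$ with $m_n = \sum_{T\in\mathfrak{T}^{(n)}} m_T$. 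The Maurer-Cartan equation (Definition \ref{de:linfmc}) thus reduces to $\sum_{n\geq 1}(-1)^{n(n-1)/2}\sum_{T\in\mathfrak{T}^{(n)}} m_T(\alpha^{\otimes n}) = 0$, which I would then identify, component-wise in $f$, with the equation extracted above.

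The main obstacle is the sign bookkeeping, since four conventions collide at once: the Koszul signs generated by sliding $s^{-1}$'s through $\Delta_T(f)$ in the cobar differential; the signs $\mathrm{sgn}(\sigma(T,T'))$ built into the axioms of a homotopy cooperad; the global sign $(-1)^{r(r-1)/2+1+r\sum|f_i|}$ attached to $m_T$ in the convolution homotopy operad; and the Maurer-Cartan prefactor $(-1)^{n(n-1)/2}/n!$. Once these are shown to cancel uniformly, the bijection and its inverse are manifest: to $\Phi$ one assigns $\alpha$ as above, and conversely any degree $-1$ Maurer-Cartan $\alpha$ extends to a morphism $\Phi$ of graded unital operads whose compatibility with differentials is guaranteed on generators, hence everywhere by the derivation property. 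Naturality in both $\calc$ and $\calp$ is then evident from the construction.
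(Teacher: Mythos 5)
The paper does not actually prove this proposition: it is quoted in \S\ref{ss:homocood} as background collected from the literature on homotopy (co)operads (Van der Laan, Merkulov--Vallette, Dotsenko--Poncin), so there is no in-paper argument to compare against. Your proposal is the standard proof of this kind of statement and its architecture is sound: a unital graded operad morphism out of the free operad $\Omega\calc$ is the same as a degree-$0$ map $s^{-1}\overline{\calc}\to\calp$ of collections, hence a degree $-1$ element $\alpha$ of $\mathbf{Hom}(\overline{\calc},\calp)^{\prod}$; compatibility with differentials need only be checked on generators because $d_\calp\circ\Phi-\Phi\circ\partial$ is a $\Phi$-derivation; and your observation that $\chi(\sigma;\alpha,\dots,\alpha)=1$ for odd $\alpha$, so that $l_n(\alpha^{\otimes n})=n!\,m_n(\alpha^{\otimes n})$ and the $1/n!$ in the Maurer--Cartan equation cancels, is exactly the right reduction. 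The finiteness of the sum for each fixed $f$ (needed for the Maurer--Cartan equation to be defined) follows from the local finiteness axiom $\Delta_T(f)=0$ for almost all $T$, which you could state explicitly.

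The one genuine incompleteness is that the crux of the verification --- that the four sign conventions you enumerate do cancel so that $d_\calp\circ\Phi=\Phi\circ\partial$ on $s^{-1}f$ is literally $\sum_{n}(-1)^{n(n-1)/2}\sum_{T\in\mathfrak{T}^{(n)}}m_T(\alpha^{\otimes n})(f)=0$ --- is asserted ("once these are shown to cancel uniformly") rather than carried out. Since with all $|f_i|=-1$ the prefactor in Proposition-Definition~\ref{Prop: convolution homotopy operad} collapses to $(-1)^{r(r-1)/2+1+r}$ and must be matched against the Maurer--Cartan prefactor $(-1)^{r(r-1)/2}$ and the Koszul signs from commuting the $s^{-1}$'s past $\Delta_T(f)$, this is a one-page computation, not a conceptual gap; but as written the proof is a correct skeleton with its only nontrivial calculation deferred.
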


At last we recall the notions of minimal models and Koszul dual homotopy cooperads  of operads and explain how they are related to deformation complexes and $L_\infty$-algebra structures on deformation complexes.

For a collection $M=\{M(n)\}_{n\geqslant 1} $ of (graded) vector spaces, denote by $ \mathcal{F}(M)$ the free  graded  operad generated by $M$. Recall that a dg operad is called {\bf quasi-free} if its underlying graded operad is free.
\begin{defn}\mcite{DCV13} \mlabel{de:model} A \name{minimal model} for an operad $\mathcal{P}$  is a quasi-free dg operad $ (\mathcal{F}(M),d)$ together with a surjective quasi-isomorphism of operads $(\mathcal{F}(M), \partial)\overset{\sim}{\twoheadrightarrow}\mathcal{P}$, where the dg operad $(\mathcal{F}(M),  \partial)$  satisfies the following conditions.
	\begin{enumerate}
		\item The differential $\partial$ is decomposable, i.e. $\partial$ takes $M$ to $\mathcal{F}(M)^{\geqslant 2}$, the subspace of $\mathcal{F}(M)$ consisting of elements with weight $\geqslant 2$; \label{it:min1}
		\item The generating collection $M$ admits a decomposition $M=\bigoplus\limits_{i\geqslant 1}M_{(i)}$  such that $\partial(M_{(k+1)})\subset \mathcal{F}\Big(\bigoplus\limits_{i=1}^kM_{(i)}\Big)$ for all $k\geqslant 1$. \label{it:min2}
\end{enumerate}
\end{defn}

By~\mcite{DCV13}, if an operad $\mathcal{P}$ admits a minimal model, then it is unique up to isomorphisms.

For an operad $\calp$, assume that its minimal model $\calp_\infty$ exists. Since $\calp_\infty$ is quasi-free,
there exists a coaugmented homotopy cooperad $\calc$ such that $\Omega\calc\cong \calp_\infty$.  So  $\calc$ is called the \textbf{Koszul dual homotopy cooperad} of $\calp$, denoted by $\calp^\ac$.

 Let $V$ be a complex. Denote by  $\End_V$ the dg endomorphism  operad. Then the underlying complex of the convolution homotopy cooperad $\mathbf{Hom}(\overline{\calp^\ac}, \End_V)$ is called the \textbf{deformation complex} of $\calp$ on the complex $V$.
An element of  $\Hom_{udgOp}( \calp_\infty,  \End_V)$ is exactly a homotopy $\calp$-structure in $V$. So
 Proposition~\ref{Prop: Linfinity give MC} gives a bijection between  the set of homotopy $\calp$-structures on $V$ and that of Maurer-Cartan elements in the $L_\infty$-algebra on the deformation complex.

\subsection{ Koszul dual homotopy cooperad of $ \Dif$}\
\mlabel{ss:dual}

Now we introduce  the operad of differential algebras with weight.

\begin{defn}\mlabel{de:difod}
	The \name{operad for differential algebras of weight $\mathbf \lambda$} is defined to be the quotient of the free graded operad  $\mathcal{F(M)}$ generated by a  graded collection  $\mathcal{M}$ by an  operadic ideal $I$, where  the   collection  $\mathcal{M}$ is given by $\mathcal{M}(1)=\bfk d, \mathcal{M}(2)=\bfk \mu $ and $\mathcal{M}(n)=0$ for $n\neq 1,2$, and  where  $I$ is   generated by
	\begin{equation}\mlabel{eq:difod}
		\mu\circ_1\mu-\mu\circ_2\mu \ \  \mathrm{and}\ \
		d\circ_1\mu-\big(\mu\circ_1d+\mu\circ_2 d+\lambda\ (\mu\circ_1 d)\circ_2 d\big).
	\end{equation}
	Denote    this operad by  $\Dif$.
\end{defn}

The  homotopy cooperad $\mathscr{S} (\Dif^\ac)$ is defined on the graded collection with arity-$n$ component
$$\mathscr{S} (\Dif^\ac)(n):=\bfk  \widetilde{m_n} \oplus \bfk  \widetilde{d_n}$$
with $|\widetilde{m_n}|=0$, $|\widetilde{d_n}|=1$ for $n\geqslant 1$.

The coaugmented homotopy cooperad structure on the graded collection $\mathscr{S} (\Dif^\ac)$ is defined as follows.
First consider the following two types of trees:
\begin{itemize}[keyvals]
	\item[(i)]
	\begin{eqnarray*}
		\begin{tikzpicture}[scale=0.6,descr/.style={fill=white}]
			\tikzstyle{every node}=[thick,minimum size=3pt, inner sep=1pt]
			\node(r) at (0,-0.5)[minimum size=0pt,circle]{};
			\node(v0) at (0,0)[fill=black, circle,label=right:{\tiny $ n-j+1$}]{};
			\node(v1-1) at (-1.5,1){};
			\node(v1-2) at(0,1)[fill=black,circle,label=right:{\tiny $\tiny j$}]{};
			\node(v1-3) at(1.5,1){};
			\node(v2-1)at (-1,2){};
			\node(v2-2) at(1,2){};
			\draw(v0)--(v1-1);
			\draw(v0)--(v1-3);
			\draw(v1-2)--(v2-1);
			\draw(v1-2)--(v2-2);
			\draw[dotted](-0.4,1.5)--(0.4,1.5);
			\draw[dotted](-0.5,0.5)--(-0.1,0.5);
			\draw[dotted](0.1,0.5)--(0.5,0.5);
			\path[-,font=\scriptsize]
			(v0) edge node[descr]{{\tiny$i$}} (v1-2);
		\end{tikzpicture}
	\end{eqnarray*}
	where $n\geqslant 1, 1\leqslant j\leqslant n, 1\leqslant i \leqslant n-j+1$.
	
	\item[(ii)]
	\begin{eqnarray*}
		\begin{tikzpicture}[scale=0.6,descr/.style={fill=white}]
			\tikzstyle{every node}=[thick,minimum size=3pt, inner sep=1pt]
	
			\node(v1-2) at(0,1.2)[circle,fill=black,label=right:{\tiny $p$}]{};

			\node(v2-1) at(-1.9,2.6){};
			\node(v2-2) at (-0.9, 2.8)[circle,fill=black,label=right:{\tiny $l_1$}]{};
			\node(v2-3) at (0,2.9){};
			\node(v2-4) at(0.9,2.8)[circle,fill=black,label=right:{\tiny $l_q$}]{};
			\node(v2-5) at(1.9,2.6){};
			\node(v3-1) at (-1.5,3.5){};
			\node(v3-2) at (-0.3,3.5){};
			\node(v3-3) at (0.3,3.5){};
			\node(v3-4) at(1.5,3.5){};
			\draw(v1-2)--(v2-1);
			\draw(v1-2)--(v2-3);
			\draw(v1-2)--(v2-5);
			\path[-,font=\scriptsize]
			(v1-2) edge node[descr]{{\tiny$k_1$}} (v2-2)
			edge node[descr]{{\tiny$k_{q}$}} (v2-4);
			\draw(v2-2)--(v3-1);
			\draw(v2-2)--(v3-2);
			\draw(v2-4)--(v3-3);
			\draw(v2-4)--(v3-4);
			\draw[dotted](-0.5,2.4)--(-0.1,2.4);
			\draw[dotted](0.1,2.4)--(0.5,2.4);
			\draw[dotted](-1.4,2.4)--(-0.8,2.4);
			\draw[dotted](1.4,2.4)--(0.8,2.4);
			\draw[dotted](-1.1,3.2)--(-0.6,3.2);
			\draw[dotted](1.1,3.2)--(0.6,3.2);
		\end{tikzpicture}
	\end{eqnarray*}
	where $  2\leqslant  p\leqslant n , 2\leqslant q \leqslant p, 1\leqslant k_1<\dots<k_{q}\leqslant p, l_1, \dots, l_q\geqslant 1,   l_1 + \cdots + l_q+p-q=n$.
\end{itemize}

Next, we define a family of operations $\{\Delta_T: \mathscr{S} (\Dif^\ac)\rightarrow \mathscr{S}( \Dif^\ac)^{\ot T}\}_{T\in \frakt}$ as follows:
\begin{itemize}
	\item[(i)] For the element $\widetilde{m_n}\in\mathscr{S} (\Dif^\ac)(n)$ and  a tree $T$ of type $\mathrm{(i)}$, define
	\begin{eqnarray*}
		\begin{tikzpicture}[scale=0.8,descr/.style={fill=white}]
			\tikzstyle{every node}=[thick,minimum size=5pt, inner sep=1pt]
			\node(r) at (0,-0.5)[minimum size=0pt,rectangle]{};
			\node(v-2) at(-2,0.5)[minimum size=0pt, label=left:{$\Delta_T(\widetilde{m_n})=$}]{};
			\node(v0) at (0,0)[draw,rectangle]{{\small $\widetilde{m_{n-j+1}}$}};
			\node(v1-1) at (-1.5,1){};
			\node(v1-2) at(0,1)[draw,rectangle]{\small$\widetilde{m_j}$};
			\node(v1-3) at(1.5,1){};
			\node(v2-1)at (-1,2){};
			\node(v2-2) at(1,2){};
			\draw(v0)--(v1-1);
			\draw(v0)--(v1-3);
			\draw(v1-2)--(v2-1);
			\draw(v1-2)--(v2-2);
			\draw[dotted](-0.4,1.5)--(0.4,1.5);
			\draw[dotted](-0.5,0.5)--(-0.1,0.5);
			\draw[dotted](0.1,0.5)--(0.5,0.5);
			\path[-,font=\scriptsize]
			(v0) edge node[descr]{{\tiny$i$}} (v1-2);
		\end{tikzpicture}
	\end{eqnarray*}
	\item[(ii)] For the element $\widetilde{d_n}\in\mathscr{S} (\Dif^\ac)(n)$ and  a tree $T$ of type $\mathrm{(i)}$, define
	\begin{eqnarray*}
		\begin{tikzpicture}[scale=0.8,descr/.style={fill=white}]
			\tikzstyle{every node}=[thick,minimum size=5pt, inner sep=1pt]
			\node(r) at (0,-0.5)[minimum size=0pt,rectangle]{};
			\node(va) at(-2,0.5)[minimum size=0pt, label=left:{$\Delta_T(\widetilde{d_n})=$}]{};
			\node(vb) at(-1.5,0.5)[minimum size=0pt ]{};
			\node(vc) at (0,0)[draw, rectangle]{\small $\widetilde{d_{n-j+1}}$};
			\node(v1) at(0,1)[draw,rectangle]{\small $\widetilde{m_j}$};
            \node(v0) at(-1.5,1){};
            \node(v2) at(1.5,1){};
			\node(v2-1)at (-1,2){};
			\node(v2-2) at(1,2){};
			\node(vd) at(1.5,0.5)[minimum size=0, label=right:$+$]{};
			\node(ve) at (3.5,0)[draw, rectangle]{\small $\widetilde{m_{n-j+1}}$};
			\node(ve1) at (3.5,1)[draw,rectangle]{\small $\widetilde{d_j}$};
			\node(ve2-1) at(2.5,2){};
            \node(ve0) at (2,1){};
            \node(ve2) at (5,1){};
			\node(ve2-2) at(4.5,2){};
			\draw(v1)--(v2-1);
			\draw(v1)--(v2-2);
			\draw[dotted](-0.4,1.5)--(0.4,1.5);
            \draw(vc)--(v0);
            \draw(vc)--(v2);
            \draw(ve)--(ve0);
            \draw(ve)--(ve2);
			\draw(ve1)--(ve2-1);
			\draw(ve1)--(ve2-2);
			\draw[dotted](3.1,1.5)--(3.9,1.5);
            \draw[dotted](-0.5,0.5)--(0,0.5);
            \draw[dotted](0,0.5)--(0.5,0.5);
            \draw[dotted](3,0.5)--(3.5,0.5);
            \draw[dotted](3.5,0.5)--(4,0.5);
            \path[-,font=\scriptsize]
			(vc) edge node[descr]{{\tiny$i$}} (v1);
            \path[-,font=\scriptsize]
			(ve) edge node[descr]{{\tiny$i$}} (ve1);
		\end{tikzpicture}
	\end{eqnarray*}
	
	\item[(iii)] For the element $\widetilde{d_n}\in\mathscr{S} (\Dif^\ac)(n)$ and a tree $T$ of type $\mathrm{(ii)}$, define
	\begin{eqnarray*}
		\begin{tikzpicture}[scale=0.8,descr/.style={fill=white}]
			\tikzstyle{every node}=[minimum size=4pt, inner sep=1pt]
			\node(v-2) at (-5,2)[minimum size=0pt, label=left:{$\Delta_T(\widetilde{d_n})=$}]{};
			\node(v-1) at(-5,2)[minimum size=0pt,label=right:{$(-1)^\frac{q(q-1)}{2}\lambda^{q-1}$}]{};
			\node(v1-1) at (-2,1){};
			\node(v1-2) at(0,1.2)[rectangle,draw]{\small $\widetilde{m_p}$};
			\node(v1-3) at(2,1){};
			\node(v2-1) at(-1.9,2.6){};
			\node(v2-2) at (-0.9, 2.8)[rectangle,draw]{\small$\widetilde{d_{l_1}}$};
			\node(v2-3) at (0,2.9){};
			\node(v2-4) at(0.9,2.8)[rectangle,draw]{\small $\widetilde{d_{l_q}}$};
			\node(v2-5) at(1.9,2.6){};
			\node(v3-1) at (-1.5,3.5){};
			\node(v3-2) at (-0.3,3.5){};
			\node(v3-3) at (0.3,3.5){};
			\node(v3-4) at(1.5,3.5){};
			\draw(v1-2)--(v2-1);
			\draw(v1-2)--(v2-3);
			\draw(v1-2)--(v2-5);
			\path[-,font=\scriptsize]
			(v1-2) edge node[descr]{{\tiny$k_1$}} (v2-2)
			edge node[descr]{{\tiny$k_{q}$}} (v2-4);
			\draw(v2-2)--(v3-1);
			\draw(v2-2)--(v3-2);
			\draw(v2-4)--(v3-3);
			\draw(v2-4)--(v3-4);
			\draw[dotted](-0.5,2.4)--(-0.1,2.4);
			\draw[dotted](0.1,2.4)--(0.5,2.4);
			\draw[dotted](-1.4,2.4)--(-0.8,2.4);
			\draw[dotted](1.4,2.4)--(0.8,2.4);
			\draw[dotted](-1.1,3.2)--(-0.6,3.2);
			\draw[dotted](1.1,3.2)--(0.6,3.2);
		\end{tikzpicture}
	\end{eqnarray*}
	\item[(iv)] $\Delta_T$ vanishes elsewhere.
\end{itemize}

Note that $\Delta_T(\widetilde{m_1})=\widetilde{m_1}\otimes \widetilde{m_1}\in  \mathscr{S} (\Dif^\ac)^{\otimes T}$ when $T$ is the tree of  type $(i)$ with $i=j=n=1$ and $\Delta_T(\widetilde{m_1})=0$ for every other tree $T$.  So $\bfk \widetilde{m_1}\cong \cali$ as homotopy cooperads and there is a  natural projection and injection $\varepsilon:\mathscr{S} (\Dif^\ac)\twoheadrightarrow \bfk \widetilde{m_1}\cong \cali$ and $\eta:\cali\cong \bfk \widetilde{m_1}\hookrightarrow \mathscr{S} (\Dif^\ac).$

\begin{prop}\mlabel{pp:dualcoop}
The graded collection $\mathscr{S} (\Dif^\ac)$ endowed with operations $\{\Delta_T\}_{T\in \frakt}$ forms a coaugmented homotopy cooperad with  strict counit  $\varepsilon:\mathscr{S} (\Dif^\ac)\twoheadrightarrow \bfk \widetilde{m_1}\cong \cali$  and the coaugmentation  $\eta:\cali\cong \bfk \widetilde{m_1}\hookrightarrow \mathscr{S} (\Dif^\ac).$ 	
\end{prop}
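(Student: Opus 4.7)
The proof proposal is to establish three things separately: the strict counitality of $\varepsilon$, the coaugmentation property of $\eta$, and the main homotopy cooperad identity
\[
\sum_{T'\subset T}\sgn(\sigma(T,T')^{-1})(-1)^{i-1+jk}\, r_{\sigma(T,T')^{-1}}\circ (\id^{\ot (i-1)}\ot \Delta_{T'}\ot \id^{\ot k})\circ \Delta_{T/T'}=0
\]
for every $T\in\frakt$. The (co)unitality reduces to direct inspection of the definition: among trees $T$ of type (i), only the subcases $j=1$ or $i=n-j+1=1$ produce an $\widetilde{m_1}$-factor, and in both subcases the factor is grafted onto $\widetilde{m_n}$ or $\widetilde{d_n}$ in exactly the position needed to realize the identity, while no other $\Delta_T$ outputs a factor in $\bfk\widetilde{m_1}\cong\cali$.

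For the main identity, the cleanest route is to invoke the equivalent formulation recalled right after Definition~\mref{Def: cobar construction}: a homotopy cooperad structure on $\mathscr{S}(\Dif^\ac)$ is the same as a square-zero derivation $\partial$ on the free operad $\mathcal{F}\bigl(s^{-1}\overline{\mathscr{S}(\Dif^\ac)}\bigr)$ defined on generators by $\partial(s^{-1}f)=-\sum_{T}(s^{-1})^{\otimes\omega(T)}\circ\Delta_T(f)$, so it suffices to verify $\partial^2(s^{-1}\widetilde{m_n})=0$ and $\partial^2(s^{-1}\widetilde{d_n})=0$ for all $n\geqslant 1$. For the first, the output of $\Delta_T(\widetilde{m_n})$ involves only type (i) trees decorated entirely by $\widetilde{m}$'s, so the identity reduces to the standard non-symmetric $A_\infty$-coassociativity, which is classical. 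The substantive case is $\partial^2(s^{-1}\widetilde{d_n})=0$, which I would organize by grouping the terms of $\partial^2(s^{-1}\widetilde{d_n})$ according to the isomorphism class of the resulting decorated tree, and verifying cancellation class-by-class: contributions carrying a single $\widetilde{d}$-label arise through Leibniz-matched pairs (the unique $\widetilde{d}$ can be produced in the first splitting and be preserved by the second, or be produced only in the second) and cancel using the $A_\infty$-coassociativity of the underlying $\widetilde{m}$-structure; contributions carrying $q\geqslant 2$ top $\widetilde{d}$-labels are of type (ii) shape and their coefficients reorganize using the binomial identity underlying $(-1)^{q(q-1)/2}$ combined with the multiplicativity of $\lambda^{q-1}$ under the factorization $q=q_1+q_2-1$ of top $\widetilde{d}$'s between the two successive splittings.

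The main obstacle is precisely the sign-and-weight bookkeeping in this last case: one must confirm that every output decorated tree with $q$ top $\widetilde{d}$'s receives the correct total coefficient $(-1)^{q(q-1)/2}\lambda^{q-1}$ independently of how the $q$ copies of $\widetilde{d}$ are distributed across the two successive splittings, and that the mixed type-(i)/type-(ii) contributions combine through a Leibniz-type relation. This computation is the operadic shadow of the differential-algebra relation $d\circ_1\mu=\mu\circ_1 d+\mu\circ_2 d+\lambda\,(\mu\circ_1 d)\circ_2 d$ of \meqref{eq:difod}, so the verification amounts to a quadratic-with-cubic-correction Koszul-type calculation twisted by the weight $\lambda$; consistently, it is this same computation that underlies the subsequent identification of $\Omega(\mathscr{S}(\Dif^\ac))$ with the minimal model of $\Dif$ in Theorem~\mref{thm:difmodel}.
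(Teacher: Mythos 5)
Your proposal follows the same overall strategy as the paper: reduce the homotopy cooperad axioms to the statement that the induced derivation $\partial$ on the cobar construction $\Omega\mathscr{S}(\Dif^\ac)$ squares to zero on the generators $s^{-1}\widetilde{m_n}$ and $s^{-1}\widetilde{d_n}$, with the $\widetilde{m}$-only part being classical $A_\infty$-coassociativity and the counit/coaugmentation checked by inspection. Where you diverge is in how the substantive cancellation in $\partial^2(s^{-1}\widetilde{d_n})$ is organized: you propose grouping output decorated trees by isomorphism class and matching coefficients by hand, flagging the sign-and-weight bookkeeping as the main obstacle. The paper instead rewrites $\partial$ on generators entirely in terms of the brace operations of the free operad, so that $\partial(\nu_n)=\sum\nu_{n-j+1}\{\mu_j\}-\sum\lambda^{q-1}\mu_p\{\nu_{l_1},\dots,\nu_{l_q}\}$, and then the six families of terms in $\partial^2(\nu_n)$ cancel in pairs, $(a)=(c)$ and $(b)+(f)=(d)+(e)$, as direct instances of the pre-Jacobi identity \meqref{Eq: pre-jacobi}; in particular the redistribution of the $q$ copies of $\widetilde{d}$ between the two splittings, and the matching of the powers of $\lambda$ under $q=q_1+\beta-1$, is handled automatically by that identity rather than by a case analysis. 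Both routes lead to the same verification, but the brace formalism absorbs exactly the bookkeeping you identify as the hard part, so if you carry out your plan you should expect the class-by-class check to reproduce the pre-Jacobi cancellations term for term.
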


\begin{proof}
 First of all, it is easy to check that  $|\Delta_T|=\omega(T)-2$ for all $T\in \frakt$.

Next, we prove that $\{\Delta_T\}_{T\in \frakt}$ endows $\mathscr{S} (\Dif^\ac)$ with a homotopy cooperad structure. It suffices to prove that the induced derivation $\partial$ on the cobar construction $\Omega \mathscr{S} (\Dif^\ac)$, namely the free operad generated by $s^{-1}\overline{\mathscr{S} (\Dif^\ac)}$ is a differential:  $\partial^2=0$. To simplify notations, we denote the generators $s^{-1}\widetilde{m_n}, n\geqslant 2$ and $s^{-1}\widetilde{d_n}, n\geqslant 1$ by $\mu_n$ and $\nu_n$ respectively and note that $|\mu_n|=-1, n\geqslant 2$  and $|\nu_n|=0, n\geqslant 1$.

 	By the definition of $\mathscr{S} (\Dif^\ac)$ (Definition \ref{de:difod} $\mathrm {(i)-(iv)}$ ) and  $\Omega \mathscr{S} (\Dif^\ac)$ (Definition~\mref{Def: cobar construction}), the action of $\partial$ on generators $\mu_n=s^{-1}\widetilde{m_n}$ and $v_n=s^{-1}\widetilde{d_n}$ can be writted in terms of trees as:
 	\begin{eqnarray}\label{Eq: trees of differential of associative}
 		\begin{tikzpicture}[scale=0.33,baseline={(b0.base)}]
 			\tikzstyle{every node}=[thick,minimum size=3pt, inner sep=1pt]
 			\node(a) at (-2,-0.5){\begin{large}$\partial \ \ \ $\end{large}};
 			\node(b0)at (0,-1)[rectangle,draw]{\tiny{$\mu_n$}};
 			\node (b1) at (-2,1)  [minimum size=0pt,label=above:{\tiny{$1$}}]{};
 			\node (b2) at (0,1)  [minimum size=0pt,label=above:{}]{};
 			\node (b3) at (2,1)  [minimum size=0pt,label=above:{\tiny{$n$}}]{};
 			\draw        (b0)--(b1);
 			\draw        (b0)--(b2);
 			\draw        (b0)--(b3);
 			\draw [dotted,line width=0.5pt] (-1.25,0.25)--(1.25,0.25);
 		\end{tikzpicture}
 		&&
 		\begin{tikzpicture}[scale=0.4,descr/.style={fill=white},baseline={(b0.base)}]
 			\tikzstyle{every node}=[thick,minimum size=5pt, inner sep=1pt]
 			\node(a) at (2,-0.5)[minimum size=0pt, label=right:{$=-\sum\limits_{2\leqslant i\leqslant n-1\atop1\leqslant j\leqslant n-i+1}$}]{};
 			\node(r1) at(10,-1)[rectangle,draw]{\tiny{$\mu_{n-i+1}$}};
 			\node(r2) at(10,1.5)[rectangle,draw]{\tiny{$\mu_{i}$}};
 			\draw(r1)--(8,1);
 			\draw (r1)--(r2);
 			\draw(r1)--(12,1);
 			\draw(r2)--(8.5,3);
 			\path[-,font=\scriptsize]
 			(r1) edge node[descr]{{\tiny{$j$}}} (r2);
 			\draw(r2)--(11.5,3);
 			\draw [dotted,line width=0.5pt] (9,0.5)--(11,0.5);
 			\draw [dotted,line width=0.5pt] (9,2.5)--(11,2.5);
 		\end{tikzpicture}\\
 		\label{Eq: trees of differential of operators}
 		\begin{tikzpicture}[scale=0.35,baseline={(b0.base)}]
 			\tikzstyle{every node}=[thick,minimum size=3pt, inner sep=1pt]
 			\node(a) at (-2,0){\begin{large}$\partial \ \ \ $\end{large}};
 			\node(b0)at (0,-1)[rectangle,draw]{\tiny{$\nu_n$}};
 			\node (b1) at (-2,1)  [minimum size=0pt,label=above:{\tiny{$1$}}]{};
 			\node (b2) at (0,1)  [minimum size=0pt,label=above:{}]{};
 			\node (b3) at (2,1)  [minimum size=0pt,label=above:{\tiny{$n$}}]{};
 			\draw        (b0)--(b1);
 			\draw        (b0)--(b2);
 			\draw        (b0)--(b3);
 			\draw [dotted,line width=0.5pt] (-1.25,0.25)--(1.25,0.25);
 		\end{tikzpicture}\!\!\!\!&&
 		\begin{tikzpicture}[scale=0.4,descr/.style={fill=white},baseline={(b0.base)}]
 			\tikzstyle{every node}=[thick,minimum size=5pt, inner sep=1pt]
 			\node(a) at (1,0)[minimum size=0pt, label=right:{$=\sum\limits_{2\leqslant i\leqslant n\atop 1\leqslant j\leqslant n-i+1}$}]{};
 			\node(r1) at(7,-1)[rectangle,draw]{\tiny{$\nu_{n-i+1}$}};
 			\node(r2) at(7,1.5)[rectangle,draw]{\tiny{$\mu_{i}$}};
 			\draw(r1)--(5,1);
 			\draw(r1)-- (r2);
 			\draw(r1)--(9,1);
 			\draw(r2)--(5.5,3);
 			\draw(r2)--(8.5,3);
 			\draw [dotted,line width=0.5pt] (6.5,0)--(7.5,0);
 			\draw [dotted,line width=0.5pt] (6.5,2.25)--(7.5,2.25);
 			\path[-,font=\scriptsize]
 			(r1) edge node[descr]{{\tiny{$j$}}} (r2);

 			\node(b) at(8.5,-0.5)[minimum size=0pt,label=right:{$\ \ \ \  -\sum\limits_{1\leqslant k_1<\dots<k_q\leqslant p\atop 1\leqslant q\leqslant p,2\leqslant p}\sum\limits_{l_1+\dots+l_q+p-q=n,\atop l_1,\dots,l_q \geqslant 1 }\!\!\!\!\lambda^{q-1}$}]{};
 			\node(l1) at(25,-1)[rectangle, draw]{\tiny{$\mu_p$}};
 			\node(l21) at(22,1.5)[rectangle, draw]{\tiny{$\nu_{l_1}$}};
 			\node(l22) at(25,1.5)[rectangle,draw]{\tiny{$\nu_{l_j}$}};
 			\node(l23) at(28,1.5)[rectangle,draw]{\tiny{$\nu_{l_q}$}};
 			\draw[dotted, line width=0.5](22.7,0.2)--(27.4,0.2);
 			\draw(l21)--(21,2.5);
 			\draw(l21)--(23,2.5);
 			\draw(l22)--(24,2.5);
 			\draw(l22)--(26,2.5);
 			\draw(l23)--(27,2.5);
 			\draw(l23)--(29,2.5);
 			\draw(l1)--(20.5,0.75);
 			\draw(l1)--(29.5,0.75);
 			\draw(l1)--(23.5,1.5);
 			\draw(l1)--(26.5,1.5);
 			\draw(l1)--(l21);
 			\draw(l1)--(l22);
 			\draw(l1)--(l23);
 			\draw[dotted,line width=0.5pt] (21.5,2.25)--(22.5,2.25);
 			\draw[dotted,line width=0.5pt] (24.5,2.25)--(25.5,2.25);
 			\draw[dotted,line width=0.5pt] (27.5,2.25)--(28.5,2.25);
 			\path[-,font=\scriptsize]
 			(l1) edge node[descr]{{\tiny{$k_1$}}} (l21)
 			(l1) edge node[descr]{{\tiny{$k_j$}}} (l22)
 			(l1) edge node[descr]{{\tiny{$k_q$}}} (l23);
 		\end{tikzpicture}
 	\end{eqnarray}
 	
 	As $\partial ^2$ is also a derivation in the free  operad $ \Omega \mathscr{S} (\Dif^\ac) $, to prove $\partial ^2=0$ is equivalent to proving that $\partial ^2=0 $ holds on the generators $\mu_n,n\geqslant 2$ and $\nu_n,n\geqslant 1$.
 	
 	For $\partial^2(\mu_n)$, we have
 	\begin{eqnarray*}
 	&&	\begin{tikzpicture}[scale=0.45,baseline={(b0.base)}]
 			\tikzstyle{every node}=[thick,minimum size=3pt, inner sep=1pt]
 			\node(a) at (-4,0.5){\begin{large}$\partial^2 \ \ \ $\end{large}};
 			\node(b0)at (-2,-0.5)[rectangle,draw]{\tiny{$\mu_n$}};
 			\node (b1) at (-3.5,1.5)  [minimum size=0pt,label=above:{\tiny{$1$}}]{};
 			\node (b2) at (-2,1.5)  [minimum size=0pt,label=above:{}]{};
 			\node (b3) at (-0.5,1.5)  [minimum size=0pt,label=above:{\tiny{$n$}}]{};
 			\draw        (b0)--(b1);
 			\draw        (b0)--(b2);
 			\draw        (b0)--(b3);
 			\draw [dotted,line width=1pt] (-3,1)--(-2.2,1);
 			\draw [dotted,line width=1pt] (-1.8,1)--(-1,1);
 		\end{tikzpicture}\\
 		&&
 		\begin{tikzpicture}[scale=0.45,descr/.style={fill=white},baseline={(b0.base)}]
 			\tikzstyle{every node}=[thick,minimum size=5pt, inner sep=1pt]
 			\node(a) at (0,-0.5)[minimum size=0pt, label=right:{$=\large{\partial}\ \Big(-\sum\limits_{2\leqslant i\leqslant n-1\atop 1\leqslant j \leqslant n-i+1}$}]{};
 			\begin{scope}[xshift=-2cm]
 \node(r1) at(10,-1)[rectangle,draw]{\tiny{$\mu_{n-i+1}$}};
 			\node(r2) at(10,1)[rectangle,draw]{\tiny{$\mu_{i}$}};
 			\node(r3) at(11.5,0)[minimum size=0pt,label=right:{$\Big)$}]{};
 			\draw(r1)--(8.5,1);
 			\draw (r1)--(r2);
 			\draw(r1)--(11.5,1);
 			\draw(r2)--(9,2.5);
 			\draw(r2)--(11,2.5);
 			\draw [dotted,line width=0.5pt] (9.5,0)--(10.5,0);
 			\draw [dotted,line width=0.5pt] (9.5,2.25)--(10.5,2.25);
 \path[-,font=\scriptsize]
 			(r1) edge node[descr]{{\tiny{$j$}}} (r2);
 \end{scope}
 		\end{tikzpicture}\\
 		&&\begin{tikzpicture}[scale=0.45,descr/.style={fill=white}]
 			\tikzstyle{every node}=[thick,minimum size=5pt, inner sep=1pt]
 			\node(a) at (0,-0.5)[minimum size=0pt, label=right:{$=\Big(-\sum\limits_{2\leqslant i\leqslant n-1\atop 1\leqslant j \leqslant n-i+1}$}]{};
 			\node(l1) at(8,-1)[rectangle,draw,label=left:{\tiny{$\partial$}}]{\tiny{$\mu_{n-i+1}$}};
 			\node(l2) at(8,1)[rectangle,draw]{\tiny{$\mu_{i}$}};
 			\node(l3) at(9.5,0)[minimum size=0pt,label=right:{$\Big)\ +\ \Big($}]{};
 			\node(b) at (13,-0.5)[minimum size=0pt, label=right:{$\sum\limits_{2\leqslant i\leqslant n-1\atop 1\leqslant j \leqslant n-i+1}$}]{};
 			\node(r1) at(18,-1)[rectangle, draw]{\tiny$\mu_{n-i+1}$};
 			\node(r2) at (18,1)[rectangle,draw,label=left:{\tiny{$\partial$}}]{\tiny$\mu_{i}$};
 			\node(r3) at(20,0)[minimum size=0pt, label=right:{$\Big)$}]{};
 			\draw(l1)--(7,0);
 			
 			\draw(l1)--(l2);
 			\draw(l1)--(9,0);
 			\draw(l2)--(7,2.5);
 			\draw(l2)--(9,2.5);
 			\draw [dotted,line width=0.5pt] (7.5,0)--(8.5,0);
 			\draw [dotted,line width=0.5pt] (7.5,2.25)--(8.5,2.25);
 			\draw(r1)--(17,0);
 			\draw(r1)--(19,0);
 			\draw(r1)--(r2);
 			\draw(r2)--(17,2.5);
 			\draw(r2)--(19,2.5);
 			\draw [dotted,line width=0.5pt] (17.5,0)--(18.5,0);
 			\draw [dotted,line width=0.5pt] (17.5,2.25)--(18.5,2.25);
  \path[-,font=\scriptsize]
            (l1)  edge node[descr]{{\tiny{$j$}}} (l2)
 			(r1) edge node[descr]{{\tiny{$j$}}} (r2);
 		\end{tikzpicture}\\
 		&&\begin{tikzpicture}[scale=0.45,descr/.style={fill=white}]
 			
 			\tikzstyle{every node}=[thick,minimum size=5pt, inner sep=1pt]
 			\tikzstyle{every node}=[thick,minimum size=5pt, inner sep=1pt]
 			\node(p) at (0,-0.5)[minimum size=0pt, label=right:{$=\Big(\sum\limits_{\tiny{\substack{i+r+s-2=n, \\ i,r,s \geqslant 2, \\ 1\leqslant j < k \leqslant r}}}$}]{};		\begin{scope}[xshift=1cm]
            \node(p1) at(6.5,-1)[rectangle,draw]{\tiny{$\mu_{r}$}};
 			\node(p21) at(5,1.5)[rectangle,draw]{\tiny{$\mu_{i}$}};
 			\node(p22) at(8,1.5)[rectangle,draw]{\tiny{$\mu_{s}$}};
 			\draw(p1)--(p21);
 			\draw(p1)--(p22);
 			\draw(p21)--(4,3);
 			\draw(p21)--(6,3);
 			\draw[dotted,line width=0.5pt](4.5,2.25)--(5.5,2.25);
 			\draw[dotted,line width=0.5pt](7.5,2.25)--(8.5,2.25);
 			\draw(p22)--(7,3);
 			\draw(p22)--(9,3);
 			\draw(p1)--(4,1);
 			\draw(p1)--(9,1);
 			\draw[dotted,line width=0.5pt](5,0.25)--(8,0.25);
 			\draw(p1)--(6.5,1.5);
 		 \path[-,font=\scriptsize]
            (p1)  edge node[descr]{{\tiny{$j$}}} (p21)
 			(p1) edge node[descr]{{\tiny{$k$}}} (p22);
 			\node(m) at(9,-0.5)[minimum size=0pt,label=right:{$\ \ -\!\!\!\sum\limits_{\tiny{\substack{i+r+s-2=n, \\ i,r,s\geqslant 2, \\ 1\leqslant k< j \leqslant r}}}$}]{};
  			\end{scope}
  \begin{scope}[xshift=1.5cm]
 			\node (m1) at (15.5,-1)[rectangle,draw]{\tiny{$\mu_r$}};
 			\node(m21) at (14,1.5)[rectangle,draw]{\tiny{$ \mu_s$}};
 			\node(m22) at (17,1.5)[rectangle,draw]{\tiny{$\mu_i$}};
 			\draw(m1)--(m21);
 			\draw(m1)--(m22);
 			\draw(m1)--(13,1);
 			\draw(m1)--(15.5,1);
 			\draw(m1)--(18,1);
 			\draw[dotted,line width=0.5pt](14,0.25)--(17,0.25);
 			\draw[dotted,line width=0.5pt](13.5,2.25)--(14.5,2.25);
 			\draw[dotted,line width=0.5pt](16.5,2.25)--(17.5,2.25);
 			\draw(m21)--(13,3);
 			\draw(m21)--(15,3);
 			\draw(m22)--(16,3);
 			\draw(m22)--(18,3);
  \path[-,font=\scriptsize]
            (m1)  edge node[descr]{{\tiny{$k$}}} (m21)
 			(m1) edge node[descr]{{\tiny{$j$}}} (m22);
 			\end{scope}
 \begin{scope}[xshift=2cm]
 			\tikzstyle{every node}=[thick,minimum size=5pt, inner sep=1pt]
 			\node(a) at (18,-0.5)[minimum size=0pt, label=right:{$+\sum\limits_{\tiny{\substack{i+r+s-2=n\\ i,r,s\geqslant 2}}} \sum\limits_{\tiny{\substack{1\leqslant k \leqslant r\\ 1\leqslant j' \leqslant s}}}$}]{};
 \end{scope}
 \begin{scope}[xshift=4.5cm]
 			\node(l1) at(23,-1)[rectangle,draw]{\tiny{$\mu_r$}};
 			\node(l2)at(23,0.5)[rectangle,draw]{\tiny{$\mu_{s}$}};
 			\node(l3) at(23,2)[rectangle,draw]{\tiny{$\mu_{i}$}};
 			\node(s31) at(24.5,0)[minimum size=0pt,label=right:{$\Big)$}]{};
 			\draw(l1)--(22,0);
 			\draw(l1)--(24,0);
 			\draw(l1)--(l2);
 			\draw(l2)--(l3);
 			\draw(l2)--(22,1.5);
 			\draw(l2)--(24,1.5);
 			\draw(l3)--(22,3);
 			\draw(l3)--(24,3);
 			\draw [dotted,line width=0.5pt] (22.5,-0.25)--(23.5,-0.25);
 			\draw [dotted,line width=0.5pt] (22.5,2.75)--(23.5,2.75);
 			\draw[dotted,line width=0.5pt] (22.5,1.25)--(23.5,1.25);
 \path[-,font=\scriptsize]
            (l1)  edge node[descr]{{\tiny{$k$}}} (l2)
 			(l2) edge node[descr]{{\tiny{$j'$}}} (l3);
 			\end{scope}
\end{tikzpicture}\\
&& \quad \begin{tikzpicture}[scale=0.45,descr/.style={fill=white}]
 			\tikzstyle{every node}=[thick,minimum size=5pt, inner sep=1pt]
 		\node(s32) at(24.5,0)[minimum size=0pt,label=right:{$\ -\ \Big($}]{};
 			\node(b) at (28,-0.5)[minimum size=0pt, label=right:{$\sum\limits_{\tiny{\substack{2\leqslant i \leqslant n-1\\ 1\leqslant j \leqslant n-i+1 }}} \sum\limits_{\tiny{\substack{2\leqslant s \leqslant i-1 \\ 1\leqslant k \leqslant i-s+1}}}$}]{};
 \begin{scope}[xshift=2.5cm]
 			\node(r1) at(33,-1)[rectangle, draw]{\tiny$\mu_{n-i+1}$};
 			\node(r2) at (33,0.5)[rectangle,draw]{\tiny$\mu_{i-s+1}$};
 			\node(r3) at(33,2)[rectangle,draw]{{\tiny{$\mu_s$}}};
 			\node(t3) at(35,0)[minimum size=0pt, label=right:{$\Big)$}]{};
 			\draw(r1)--(32,0);
 			\draw(r1)--(34,0);
 			\draw(r1)--(r2);
 			\draw(r2)--(r3);
 			\draw(r2)--(32,1.5);
 			\draw(r2)--(34,1.5);
 			\draw(r3)--(32,3);
 			\draw(r3)--(34,3);
 			\draw [dotted,line width=0.5pt] (32.5,1.25)--(33.5,1.25);
 			\draw [dotted,line width=0.5pt] (32.5,-0.25)--(33.5,-0.25);
 			\draw [dotted,line width=0.5pt] (32.5,2.75)--(33.5,2.75);
  \path[-,font=\scriptsize]
            (r1)  edge node[descr]{{\tiny{$j$}}} (r2)
 			(r2) edge node[descr]{{\tiny{$k$}}} (r3);
 \end{scope}
 		\end{tikzpicture}\\
 		&&=0,
 	\end{eqnarray*}
 	where we use the pre-Jacobi identity \meqref{Eq: pre-jacobi1} implicitly in the second-to-last equality.
 	
 	The computation of $\partial ^2(\nu_n)$ is more involved.
 	\begin{eqnarray*}
 		&&\begin{tikzpicture}[scale=0.43,baseline={(b0.base)}]
 			\tikzstyle{every node}=[thick,minimum size=3pt, inner sep=1pt]
 			\node(a) at (-2,0){\begin{large}$\partial^2 \ \ \ $\end{large}};
 \begin{scope}[xshift=1cm]
 			\node(b0)at (0,-1)[rectangle,draw]{\tiny{$\nu_n$}};
 			\node (b1) at (-2.5,1.5)  [minimum size=0pt,label=above:{\tiny{$1$}}]{};
 			\node (b2) at (0,1.5)  [minimum size=0pt,label=above:{}]{};
 			\node (b3) at (2.5,1.5)  [minimum size=0pt,label=above:{\tiny{$n$}}]{};
 			\draw        (b0)--(b1);
 			\draw        (b0)--(b2);
 			\draw        (b0)--(b3);
 			\draw [dotted,line width=0.5pt] (-1.25,0.25)--(1.25,0.25);
 \end{scope}
 		\end{tikzpicture}\!\!\!\!\\
 		&&\begin{tikzpicture}[scale=0.43,descr/.style={fill=white},baseline={(b0.base)}]
 			\tikzstyle{every node}=[thick,minimum size=5pt, inner sep=1pt]
 			\node(a) at (0,0)[minimum size=0pt, label=right:{$=\large{\partial}\ \Big(\sum\limits_{2\leqslant i\leqslant n \atop 1\leqslant j \leqslant n-i-1 }$}]{};
 			\node(r1) at(7,-1)[rectangle,draw]{\tiny{$\nu_{n-i+1}$}};
 			\node(r2) at(7,1.5)[rectangle,draw]{\tiny{$\mu_{i}$}};
 			\draw(r1)--(6,0.5);
 			\draw(r1)-- (r2);
 			\draw(r1)--(8,0.5);
 			\draw(r2)--(6,2.5);
 			\draw(r2)--(8,2.5);
 			\draw [dotted,line width=0.5pt] (6.5,0)--(7.5,0);
 			\draw [dotted,line width=0.5pt] (6.5,2.25)--(7.5,2.25);
  \path[-,font=\scriptsize]
            (r1)  edge node[descr]{{\tiny{$j$}}} (r2);

 			\node(b) at(8,-0.5)[minimum size=0pt,label=right:{$\ \ \ \  -\sum\limits_{\tiny{\substack{l_1+\dots+l_q+p-q=n,\\ 1\leqslant q\leqslant p,p\geqslant 2 \\ 1\leqslant k_1 < \dots < k_q \leqslant p}}}\lambda^{q-1}$}]{};
 			\node(l1) at(21,-1)[rectangle, draw]{\tiny{$\mu_p$}};
 			\node(l21) at(18,1.5)[rectangle, draw]{\tiny{$\nu_{l_1}$}};
 			\node(l22) at(21,1.5)[rectangle,draw]{\tiny{$\nu_{l_j}$}};
 			\node(l23) at(24,1.5)[rectangle,draw]{\tiny{$\nu_{l_q}$}};
 			\draw[dotted, line width=0.5](18.7,0.2)--(23.4,0.2);
 			\draw(l21)--(17,2.5);
 			\draw(l21)--(19,2.5);
 			\draw(l22)--(20,2.5);
 			\draw(l22)--(22,2.5);
 			\draw(l23)--(23,2.5);
 			\draw(l23)--(25,2.5);
 			\draw(l1)--(16,1.3);
 			\draw(l1)--(26,1.3);
 			\draw(l1)--(19.5,1.5);
 			\draw(l1)--(22.5,1.5);
 			\draw(l1)--(l21);
 			\draw(l1)--(l22);
 			\draw(l1)--(l23);
 			\draw[dotted,line width=0.5pt] (17.5,2.25)--(18.5,2.25);
 			\draw[dotted,line width=0.5pt] (20.5,2.25)--(21.5,2.25);
 			\draw[dotted,line width=0.5pt] (23.5,2.25)--(24.5,2.25);
 			 \path[-,font=\scriptsize]
            (l1)  edge node[descr]{{\tiny{$k_1$}}} (l21)
            (l1)  edge node[descr]{{\tiny{$k_j$}}} (l22)
            (l1)  edge node[descr]{{\tiny{$k_q$}}} (l23);
 			\node(m) at(26.5,0)[minimum size=0pt, label=right:$\Big)$]{};

 		\end{tikzpicture}\\
 		&&\begin{tikzpicture}[scale=0.43,descr/.style={fill=white}]
 			\tikzstyle{every node}=[thick,minimum size=5pt, inner sep=1pt]
 			\node(a) at (0,0)[minimum size=0pt, label=right:{$=\sum\limits_{2\leqslant i\leqslant n\atop 1\leqslant j \leqslant n-i+1}$}]{};
 			\node(l1) at(6,-1)[rectangle,draw,label=left:{\tiny{$\partial$}}]{\tiny{$ \nu_{n-i+1}$}};
 			\node(l2) at(6,1.5)[rectangle,draw]{\tiny{$\mu_{i}$}};
 			\draw(l1)--(5,0.5);
 			\draw(l1)--(7,0.5);
 			\draw(l2)--(5,2.5);
 			\draw(l2)--(7,2.5);
 			\draw [dotted,line width=0.5pt] (5.5,0)--(6.5,0);
 			\draw [dotted,line width=0.5pt] (5.5,2.25)--(6.5,2.25);
 			\draw(l1)--(l2);
 			 \path[-,font=\scriptsize]
            (l1)  edge node[descr]{{\tiny{$j$}}} (l2);
 			
 			\node(b) at(7,0)[minimum size=0pt,label=right:{$\ \ +\ \ \sum\limits_{2\leqslant i\leqslant n\atop 1\leqslant j \leqslant n-i+1} $}]{};
 			\node (m1) at (14,-1)[rectangle,draw]{\tiny{$\nu_{n-i+1}$}};
 			\node(m2) at (14,1.5)[rectangle,draw,label=left:{\tiny{$\partial$}}]{\tiny{$ \mu_i$}};
 			\draw[dotted,line width=0.5pt](13.5,0)--(14.5,0);
 			\draw[dotted,line width=0.5pt](13.5,2.25)--(14.5,2.25);
 			\draw(m1)--(13,0.5);
 			\draw(m1)--(15,0.5);
 			\draw(m2)--(13,2.5);
 			\draw(m2)--(15,2.5);
 			\draw(m1)--(m2);
 			\path[-,font=\scriptsize]
            (m1)  edge node[descr]{{\tiny{$j$}}} (m2);

 			\node(c) at(17,0)[minimum size=0pt,label=right:{$-\sum\limits_{\tiny{\substack{l_1+\dots+l_q+p-q=n,\\ 1\leqslant q\leqslant p,p\geqslant 2 \\ 1\leqslant k_1 < \dots < k_q \leqslant p}}} {\lambda^{q-1}}$}]{};
 \begin{scope}[xshift=-2.5cm]
 			\node (r1) at (31,-1)[rectangle,draw,label=left:{\tiny{$\partial$}}]{\tiny{$\mu_p$}};
 			\node(r21) at(28,1.5)[rectangle, draw]{\tiny{$\nu_{l_1}$}};
 			\node(r22) at(31,1.5)[rectangle,draw]{\tiny{$\nu_{l_j}$}};
 			\node(r23) at(34,1.5)[rectangle,draw]{\tiny{$\nu_{l_q}$}};
 			\draw(r1)--(26,1.25);
 			\draw(r1)--(36,1.25);
 			\draw(r22)--(30,2.5);
 			\draw(r22)--(32,2.5);
 			\draw(r21)--(27,2.5);
 			\draw(r21)--(29,2.5);
 			\draw(r23)--(33,2.5);
 			\draw(r23)--(35,2.5);
 			\draw(r1)--(r21);
 			\draw(r1)--(r22);
 			\draw(r1)--(r23);
 			\draw[dotted,line width=0.5pt](28.6,0.25)--(33.4,0.25);
 			\draw[dotted,line width=0.5pt](27.5,2.25)--(28.5,2.25);
 			\draw(r1)--(29.5,1.5);
 			\draw(r1)--(32.5,1.5);
 			\draw[dotted,line width=0.5pt](30.5,2.25)--(31.5,2.25);
 			\draw[dotted,line width=0.5pt](33.5,2.25)--(34.5,2.25);
         \path[-,font=\scriptsize]
            (r1)  edge node[descr]{{\tiny{$k_1$}}} (r21)
            (r1)  edge node[descr]{{\tiny{$k_j$}}} (r22)
            (r1)  edge node[descr]{{\tiny{$k_q$}}} (r23);
            \end{scope}
 		\end{tikzpicture}\\
 		&&
 		\quad \begin{tikzpicture}
 			[scale=0.43,descr/.style={fill=white}]
 			\tikzstyle{every node}=[thick,minimum size=5pt, inner sep=1pt]
 			\node(a) at(0,0)[minimum size=0pt,label=right:{$+\sum\limits_{\tiny{\substack{l_1+\dots+l_q+p-q=n,\\ 1\leqslant q\leqslant p,p\geqslant 2,1\leqslant j \leqslant q \\ 1\leqslant k_1 < \dots < k_q \leqslant p}}}\lambda^{q-1}$}]{};
 			\node (l1) at (13,-1)[rectangle,draw]{\tiny{$ \mu_p$}};
 			\node(l21) at(10,1.5)[rectangle, draw]{\tiny{$\nu_{l_1}$}};
 			\node(l22) at(13,1.5)[rectangle,draw,label=left:{\tiny{$\partial$}}]{\tiny{$\nu_{l_j}$}};
 			\node(l23) at(16,1.5)[rectangle,draw]{\tiny{$\nu_{l_q}$}};
 			\draw(l1)--(8,1.3);
 			\draw(l1)--(18,1.3);
 			\draw(l21)--(9,2.5);
 			\draw(l21)--(11,2.5);
 			\draw(l22)--(12,2.5);
 			\draw(l22)--(14,2.5);
 			\draw(l23)--(15,2.5);
 			\draw(l23)--(17,2.5);
 			\draw(l1)--(l21);
 			\draw(l1)--(l22);
 			\draw(l1)--(l23);
 			\draw(l1)--(11.5,1.5);
 			\draw(l1)--(14.5,1.5);
 			\draw[dotted,line width=0.5pt](10.5,0.2)--(15.5,0.2);
 			\draw[dotted,line width=0.5pt](9.5,2.25)--(10.5,2.25);
 			\draw[dotted,line width=0.5pt](12.5,2.25)--(13.5,2.25);
 			\draw[dotted,line width=0.5pt](15.5,2.25)--(16.5,2.25);
          \path[-,font=\scriptsize]
            (l1)  edge node[descr]{{\tiny{$k_1$}}} (l21)
            (l1)  edge node[descr]{{\tiny{$k_j$}}} (l22)
            (l1)  edge node[descr]{{\tiny{$k_q$}}} (l23);
 		\end{tikzpicture}\\
 		&&\begin{tikzpicture}[scale=0.43,descr/.style={fill=white}]
 			\tikzstyle{every node}=[thick,minimum size=5pt, inner sep=1pt]
 			\node(a) at (0,0)[minimum size=0pt, label=right:{$=\Big(\sum\limits_{\tiny{\substack{i+r+s-2=n, \\ i,s \geqslant 2, r\geqslant 1, \\ 1\leqslant j < k \leqslant r}}}$}]{};
 \begin{scope}[yshift=-0.5cm]
 			\node(l1) at(8,-1)[rectangle,draw]{\tiny{$\nu_{r}$}};
 			\node(l21) at(6.5,1.5)[rectangle,draw]{\tiny{$\mu_{i}$}};
 			\node(l22) at(9.5,1.5)[rectangle,draw]{\tiny{$\mu_{s}$}};
 			\draw(l1)--(l21);
 			\draw(l1)--(l22);
 			\draw(l21)--(5.5,3);
 			\draw(l21)--(7.5,3);
 			\draw[dotted,line width=0.5pt](6,2.25)--(7,2.25);
 			\draw[dotted,line width=0.5pt](9,2.25)--(10,2.25);
 			\draw(l22)--(8.5,3);
 			\draw(l22)--(10.5,3);
 			\draw(l1)--(5,1.5);
 			\draw(l1)--(11,1.5);
 			\draw[dotted,line width=0.5pt](6.5,0.25)--(9.5,0.25);
 			\draw(l1)--(8,1.5);
 			\path[-,font=\scriptsize]
            (l1)  edge node[descr]{{\tiny{$j$}}} (l21)
 			(l1) edge node[descr]{{\tiny{$k$}}} (l22);
 \end{scope}
 			\begin{scope}[xshift=3cm]
 			\node(b) at(10,0)[minimum size=0pt,label=right:{$\ \ -\sum\limits_{\tiny{\substack{i+r+s-2=n, \\ i,s \geqslant 2, r\geqslant 1, \\ 1\leqslant k < j \leqslant r}}} $}]{};
 \begin{scope}[yshift=-0.5cm]
 			\node (m1) at (17.5,-1)[rectangle,draw]{\tiny{$\nu_r$}};
 			\node(m21) at (16,1.5)[rectangle,draw]{\tiny{$ \mu_s$}};
 			\node(m22) at (19,1.5)[rectangle,draw]{\tiny{$\mu_i$}};
 			\draw(m1)--(m21);
 			\draw(m1)--(m22);
 			\draw(m1)--(17.5,1.5);
 			\draw(m1)--(14.5,1.5);
 			\draw(m1)--(20.5,1.5);
 			\draw[dotted,line width=0.5pt](16,0.25)--(19,0.25);
 			\draw[dotted,line width=0.5pt](15.5,2.25)--(16.5,2.25);
 			\draw[dotted,line width=0.5pt](18.5,2.25)--(19.5,2.25);
 			\draw(m21)--(15,3);
 			\draw(m21)--(17,3);
 			\draw(m22)--(18,3);
 			\draw(m22)--(20,3);
 			\path[-,font=\scriptsize]
            (m1)  edge node[descr]{{\tiny{$k$}}} (m21)
 			(m1) edge node[descr]{{\tiny{$j$}}} (m22);
 			\end{scope}	\end{scope}
 		\begin{scope}[xshift=5cm]	
 			\node(c1) at(20,0)[minimum size=0pt,label=right:{$\ \ + \sum\limits_{\tiny{\substack{i+r+s-2=n\\ i,s \geqslant 2, r\geqslant 1}}} \sum\limits_{\tiny{\substack{1\leqslant k \leqslant r\\ 1\leqslant j' \leqslant s}}}$}]{};
 \begin{scope}[yshift=-1cm]	
 			\node(r1) at(27,-1)[rectangle,draw]{\tiny{$ \nu_r$}};
 			\node(r2) at (27,1)[rectangle,draw]{\tiny{$ \mu_s$}};
 			\node(r3)  at (27,3)[rectangle,draw]{\tiny{$ \mu_i$}};
 			\draw(r1)--(r2);
 			\draw(r2)--(r3);
 			\draw(r1)--(26,0);
 			\draw(r1)--(28,0);
 			\draw(r2)--(26,2);
 			\draw(r2)--(28,2);
 			\draw[dotted,line width=0.5pt](26.5,-0.25)--(27.5,-0.25);
 			\draw[dotted,line width=0.5pt](26.5,1.75)--(27.5,1.75);
 			\draw(r3)--(26,4);
 			\draw(r3)--(28,4);
 			\draw[dotted,line width=0.5pt](26.25,3.75)--(27.75,3.75);
 \path[-,font=\scriptsize]
            (r1)  edge node[descr]{{\tiny{$k$}}} (r2)
 			(r2) edge node[descr]{{\tiny{$j'$}}} (r3);
 \end{scope}
 \end{scope}
 		\end{tikzpicture}\\
&& \ \ \  \begin{tikzpicture}[scale=0.43,descr/.style={fill=white}]
 			\tikzstyle{every node}=[thick,minimum size=5pt, inner sep=1pt]
 			\node(d) at(29,0)[minimum size=0pt,label=right:{$\ -\sum\limits_{2\leqslant i\leqslant n-1\atop p\geqslant 2,1\leqslant q \leqslant p , }\sum\limits_{\tiny{\substack{ l_1+\dots+l_q+p-q=n-i+1 \\ 1\leqslant k_1 < \dots < k_q \leqslant p \\ 1\leqslant t\leqslant q,1\leqslant j'\leqslant l_t }}}\!\!\!\!\!{\lambda^{q-1}}$}]{};
 \begin{scope}[xshift=2cm,yshift=-0.5cm]
 			\node (rr1) at (41,-1)[rectangle,draw]{\tiny{$\mu_p$}};
 			\node(rr21) at(38,1.5)[rectangle, draw]{\tiny{$\nu_{l_1}$}};
 			\node(rr22) at(41,1.25)[rectangle,draw]{\tiny{$\nu_{l_{t}}$}};
 			\node(rr23) at(44,1.5)[rectangle,draw]{\tiny{$\nu_{l_q}$}};
 			\node(rr3) at(41, 3)[rectangle,draw]{\tiny{$\mu_i$}};
 			\draw(rr1)--(37,0.65);
 			\draw[dotted,line width=0.5pt](38.6,0.2)--(43.4,0.2);
 			\draw(rr1)--(45,0.65);
 			\draw(rr21)--(37,2.5);
 			\draw(rr21)--(39,2.5);
 			\draw[dotted,line width=0.5pt](37.5,2.25)--(38.5,2.25);
 			\draw(rr22)--(40,2.5);
 			\draw(rr22)--(42,2.5);
 			\draw[dotted,line width=0.5pt](44.5,2.25)--(43.5,2.25);
 			\draw(rr23)--(43,2.5);
 			\draw(rr23)--(45,2.5);
 			\draw[dotted,line width=0.5pt](40.5,2.25)--(41.5,2.25);
 			\draw[dotted,line width=0.5pt](40.5,3.75)--(41.5,3.75);
 			\draw(rr3)--(40,4);
 			\draw(rr3)--(42,4);
 			\draw(rr1)--(rr21);
 			\draw(rr1)--(rr22);
 			\draw(rr1)--(rr23);
 			\draw(rr22)--(rr3);
 			\draw(rr1)--(39.5,1.5);
 			\draw(rr1)--(42.5,1.5);
 			\path[-,font=\scriptsize]
            (rr1)  edge node[descr]{{\tiny{$k_1$}}} (rr21)
 			(rr1) edge node[descr]{{\tiny{$k_t$}}} (rr22)
            (rr1) edge node[descr]{{\tiny{$k_q$}}} (rr23)
          (rr22) edge node[descr]{{\tiny{$j'$}}} (rr3);
          \end{scope}
 			\begin{scope}[xshift=46cm]
 			\node(a1) at(0,0)[minimum size=0pt,label=right:{$\ \ \ -\sum\limits_{2\leqslant i \leqslant n-1 \atop p\geqslant 2, 1\leqslant q\leqslant p}\sum\limits_{\tiny{\substack{ l_1+\dots+l_q+p-q=n-i+1\\ 1\leqslant k_1 < \dots < k_q \leqslant p\\1\leqslant j' \leqslant p ,j'\neq k_1,\dots, k_q}}}\!\!\!\!{\lambda^{q-1}}$}]{};
 \begin{scope}[xshift=3cm,yshift=-0.5cm]
 			\node (l1) at (12,-1)[rectangle,draw]{\tiny{$\mu_p$}};
 			\node (l21) at(9,1.5)[rectangle, draw]{\tiny{$\nu_{l_1}$}};
 			\node(l22) at(12,2.5)[rectangle,draw]{\tiny{$\mu_{i}$}};
 			\node(l23) at(15,1.5)[rectangle,draw]{\tiny{$\nu_{l_q}$}};
 			\draw(l1)--(8,0.65);
 			\draw(l1)--(16,0.65);
 			\draw[dotted, line width=0.5pt](9.5,0.2)--(14.5,0.2);
 			\draw(l22)--(11,3.5);
 			\draw(l22)--(13,3.5);
 			\draw(l21)--(8,2.5);
 			\draw(l21)--(10,2.5);
 			\draw(l23)--(14,2.5);
 			\draw(l23)--(16,2.5);
 			\draw(l1)--(l21);
 			\draw(l1)--(l22);
 			\draw(l1)--(l23);
 			\draw(l1)--(10.5,1.5);
 			\draw(l1)--(13.5,1.5);
 			\draw[dotted,line width=0.5pt](8.5,2.25)--(9.5,2.25);
 			\draw[dotted,line width=0.5pt](14.5,2.25)--(15.5,2.25);
 			\draw[dotted,line width=0.5pt](11.5,3.25)--(12.5,3.25);
 \path[-,font=\scriptsize]
            (l1)  edge node[descr]{{\tiny{$k_1$}}} (l21)
 			(l1) edge node[descr]{{\tiny{$j'$}}} (l22)
            (l1) edge node[descr]{{\tiny{$k_q$}}} (l23);
            \node(b') at(16,0.75)[minimum size=0pt,label=right:{$\Big)$}]{};
            \end{scope}
 			\end{scope}
 			\end{tikzpicture}\\
 && \quad
\begin{tikzpicture}[scale=0.43,descr/.style={fill=white}]
 			\tikzstyle{every node}=[thick,minimum size=5pt, inner sep=1pt]
 			\node(b) at(16,0)[minimum size=0pt,label=right:{$-\sum\limits_{2\leqslant i \leqslant n,\atop 1\leqslant j \leqslant n-i+1  }\sum\limits_{2\leqslant s\leqslant i-1,\atop 1\leqslant k \leqslant i-s+1} $}]{};
 \begin{scope}[xshift=1cm,yshift=-1cm]
 			\node(m1) at(23,-1)[rectangle,draw]{\tiny{$ \nu_{n-i+1}$}};
 			\node(m2) at (23,1)[rectangle,draw]{\tiny{$ \mu_{i-s+1}$}};
 			\node(m3)  at (23,3)[rectangle,draw]{\tiny{$ \mu_s$}};
 			\draw(m1)--(m2);
 			\draw(m2)--(m3);
 			\draw(m1)--(22,0);
 			\draw(m1)--(24,0);
 			\draw(m2)--(22,2);
 			\draw(m2)--(24,2);
 			\draw(m3)--(22,4);
 			\draw(m3)--(24,4);
 			\draw[dotted,line width=0.5pt](22.5,-0.25)--(23.5,-0.25);
 			\draw[dotted,line width=0.5pt](22.25,1.75)--(23.75,1.75);
 			\draw[dotted,line width=0.5pt](22.25,3.75)--(23.75,3.85);
 			\path[-,font=\scriptsize]
            (m1)  edge node[descr]{{\tiny{$j$}}} (m2)
 			(m2) edge node[descr]{{\tiny{$k$}}} (m3);
 			\begin{scope}[yshift=0.5cm]
 			\node(c) at(25,0)[minimum size=0pt,label=right:{$+ \ \Big(\sum\limits_{\tiny{\substack{p\geqslant 2,  1\leqslant q\leqslant p, \\ l_1+\dots+l_q+p-q=n,\\ 1\leqslant u\leqslant v \leqslant q  }}} \sum\limits_{\tiny{\substack{2\leqslant i \leqslant p-1 \\ 1\leqslant j'\leqslant p-i+1 \\  1\leqslant k'_{u} < \dots < k'_v \leqslant i\\ 1\leqslant k'_1 < \dots <k'_{u-1}< j' < k'_{v+1} < \dots < k'_q \leqslant p-i+1}}} \!\!\!\!\!\!\lambda^{q-1}\ \ \ \ \  \ $}]{};
 \end{scope}     \end{scope}\begin{scope}[xshift=9cm,yshift=-1cm]
 			\node (l1) at (36,-1)[rectangle,draw]{\tiny{$\mu_{p-i+1}$}};
 			\node(l21) at(33.5,1)[rectangle, draw]{\tiny{$\nu_{l_1}$}};
 			\node(l22) at(36,1.5)[rectangle,draw]{\tiny{$\mu_{i}$}};
 			\node(l23) at(38.5,1)[rectangle,draw]{\tiny{$\nu_{l_q}$}};
 			\node(l31) at(35,3.5)[rectangle,draw]{\tiny{$\nu_{l_u}$}};
 			\node(l32) at(37,3.5)[rectangle,draw]{\tiny{$\nu_{l_{v}}$}};
 			\draw(l1)--(l21);
 			\draw(l1)--(l22);
 			\draw(l1)--(l23);
 			\draw(l1)--(32,0.3);
 			\draw(l1)--(40,0.3);
 			\draw(l22)--(l31);
 			\draw(l22)--(l32);
 			\draw(l31)--(34,5);
 			\draw(l31)--(35.8,5);
 			\draw(l32)--(36.2,5);
 			\draw(l32)--(38,5);
 			\draw(l21)--(32.5,2);
 			\draw(l21)--(34.5,2);
 			\draw(l23)--(37.5,2);
 			\draw(l23)--(39.5,2);
 			\draw(l1)--(35,1);
 			\draw(l1)--(37,1);
 			\draw[dotted,line width=0.5](34,0)--(38,0);
 			\draw[dotted,line width=0.5](33,1.75)--(34,1.75);
 			\draw[dotted,line width=0.5](38,1.75)--(39,1.75);
 			\draw(l22)--(33.5,3);
 			\draw(l22)--(38.5,3);
 			\draw[dotted,line width=0.5pt](35,2.5)--(37,2.5);
 			\draw[dotted,line width=0.5pt](34.5,4.5)--(35.5,4.5);
 			\draw[dotted,line width=0.5pt](36.5,4.5)--(37.5,4.5);
 \path[-,font=\scriptsize]
            (l1)  edge node[descr]{{\tiny{$k'_1$}}} (l21)
 			(l1) edge node[descr]{{\tiny{$j'$}}} (l22)
            (l1) edge node[descr]{{\tiny{$k'_q$}}} (l23)
            (l22) edge node[descr]{{\tiny{${k'_{u}}$}}} (l31)
            (l22) edge node[descr]{{\tiny{${k'_{v}}$}}} (l32);
 			\end{scope}
 		\end{tikzpicture}\\
 	&&\ \begin{tikzpicture}[scale=0.4,descr/.style={fill=white}]
 		\tikzstyle{every node}=[thick,minimum size=5pt, inner sep=1pt]
 	\node(a1) at(0,0)[minimum size=0pt,label=right:{$\ \ \ +\sum\limits_{ p\geqslant 2, 1\leqslant q\leqslant p, \atop 2\leqslant i \leqslant p-1 }\sum\limits_{\tiny{\substack{ l_1+\dots+l_q+p-q=n\\ 1\leqslant k'_1 < \dots < k'_q \leqslant p-i+1 \\1\leqslant j' \leqslant p-i+1 ,j'\neq k'_1,\dots, k'_q}}}\!\!\!\!\!\!\!{\lambda^{q-1}}$}]{};
 \begin{scope}[xshift=3cm,yshift=-0.5cm]
 			\node (l1) at (12,-1)[rectangle,draw]{\tiny{$\mu_{p-i+1}$}};
 			\node (l21) at(9,1.5)[rectangle, draw]{\tiny{$\nu_{l_1}$}};
 			\node(l22) at(12,2.5)[rectangle,draw]{\tiny{$\mu_{i}$}};
 			\node(l23) at(15,1.5)[rectangle,draw]{\tiny{$\nu_{l_q}$}};
 			\draw(l1)--(7.5,0.65);
 			\draw(l1)--(16.5,0.65);
 			\draw[dotted, line width=0.5pt](9.5,0.2)--(14.5,0.2);
 			\draw(l22)--(11,3.5);
 			\draw(l22)--(13,3.5);
 			\draw(l21)--(8,2.5);
 			\draw(l21)--(10,2.5);
 			\draw(l23)--(14,2.5);
 			\draw(l23)--(16,2.5);
 			\draw(l1)--(l21);
 			\draw(l1)--(l22);
 			\draw(l1)--(l23);
 			\draw(l1)--(10.5,1.5);
 			\draw(l1)--(13.5,1.5);
 			\draw[dotted,line width=0.5pt](8.5,2.25)--(9.5,2.25);
 			\draw[dotted,line width=0.5pt](14.5,2.25)--(15.5,2.25);
 			\draw[dotted,line width=0.5pt](11.5,3.25)--(12.5,3.25);
 \path[-,font=\scriptsize]
            (l1)  edge node[descr]{{\tiny{$k'_1$}}} (l21)
 			(l1) edge node[descr]{{\tiny{$j'$}}} (l22)
            (l1) edge node[descr]{{\tiny{$k'_q$}}} (l23);
            \end{scope}
            \begin{scope}[xshift=20cm]
            \begin{scope}[yshift=0.75cm]
	\node(a) at(-0.5,-0.5)[minimum size=0pt,label=right:{$\Big)+ \Big( \sum\limits_{\tiny{\substack{p\geqslant 2, 1\leqslant q \leqslant p, 1\leqslant j \leqslant q\\ l_1+\dots+l_q+p-q=n,\\ 1 \leqslant k_1 < \dots < k_q \leqslant p}}}\sum\limits_{ r_j+s_j-1=l_j \atop 1\leqslant t \leqslant r_j  }\!\! \small{\lambda^{q-1}}$}]{};
\end{scope}
\begin{scope}[xshift=3.5cm,yshift=-1cm]
 			\node (r1) at (11.5,-1)[rectangle,draw]{\tiny{$\mu_p$}};
 			\node(r21) at(9,1)[rectangle, draw]{\tiny{$\nu_{l_1}$}};
 			\node(r22) at(11.5,1.5)[rectangle,draw]{\tiny{$\nu_{r_j}$}};
 			\node(r23) at(14,1)[rectangle,draw]{\tiny{$\nu_{l_q}$}};
 			\node(r3) at(11.5,3.5)[rectangle,draw]{\tiny{$\mu_{s_j}$}};
 			\draw(r1)--(r21);
 			\draw(r1)--(r22);
 			\draw(r1)--(r23);
 			\draw(r1)--(7.5,0.3);
 			\draw(r1)--(15.5,0.3);
 			\draw(r22)--(r3);
 			\draw(r21)--(8,2);
 			\draw(r21)--(10,2);
 			\draw(r23)--(13,2);
 			\draw(r23)--(15,2);
 			\draw(r1)--(10.5,1);
 			\draw(r1)--(12.5,1);
 			\draw[dotted,line width=0.5](9.5,0)--(13.5,0);
 			\draw[dotted,line width=0.5](8.5,1.75)--(9.5,1.75);
 			\draw[dotted,line width=0.5](13.5,1.75)--(14.5,1.75);
 			\draw(r22)--(9.5,3);
 			\draw(r22)--(13.5,3);
 			\draw[dotted,line width=0.5pt](10.5,2.5)--(12.5,2.5);
 			\draw[dotted,line width=0.5pt](11,4.5)--(12,4.5);
 			\draw(r3)--(10.5,5);
 			\draw(r3)--(12.7,5);
  \path[-,font=\scriptsize]
            (r1)  edge node[descr]{{\tiny{$k_1$}}} (r21)
 			(r1) edge node[descr]{{\tiny{$k_j$}}} (r22)
            (r1) edge node[descr]{{\tiny{$k_q$}}} (r23)
            (r22) edge node[descr]{{\tiny{$t$}}} (r3);
 \end{scope}
 			\end{scope}
 			 \end{tikzpicture}\\
 		&& \quad \begin{tikzpicture}[scale=0.4,descr/.style={fill=white}]
 			\tikzstyle{every node}=[thick,minimum size=5pt, inner sep=1pt]
 			
 			\node(b1) at(15.5,-0.5)[minimum size=0pt,label=right:{$ -\sum\limits_{\tiny{\substack{p\geqslant 2, 1\leqslant q\leqslant p,1\leqslant j \leqslant q\\l_1+\dots+l_q+p-q=n, \\ 1\leqslant k_1 < \dots < k_q \leqslant p}}}\sum\limits_{\tiny{\substack{ r_j\geqslant 2, 1\leqslant k \leqslant r_j, \\ s_1+\dots+s_k+r_j - k =l_j \\ 1\leqslant t_1 < \dots < t_k \leqslant r_j}}}\lambda^{q+k-2}\ \ $}]{};
 \begin{scope}[yshift=-1.5cm]
 			\node (m1) at (31.5,-1)[rectangle,draw]{\tiny{$\mu_p$}};
 			\node(m21) at(29,1)[rectangle, draw]{\tiny{$\nu_{l_1}$}};
 			\node(m22) at(31.5,1.5)[rectangle,draw]{\tiny{$\mu_{r_j}$}};
 			\node(m23) at(34,1)[rectangle,draw]{\tiny{$\nu_{l_q}$}};
 			\node(m31) at(30.5,3.5)[rectangle,draw]{\tiny{$\nu_{s_1}$}};
 			\node(m32) at(32.5,3.5)[rectangle,draw]{\tiny{$\nu_{s_k}$}};
 			\draw(m1)--(m21);
 			\draw(m1)--(m22);
 			\draw(m1)--(m23);
 			\draw(m1)--(27.5,0.3);
 			\draw(m1)--(35.5,0.3);
 			\draw(m22)--(m31);
 			\draw(m22)--(m32);
 			\draw(m31)--(29.5,5);
 			\draw(m31)--(31.3,5);
 			\draw(m32)--(31.7,5);
 			\draw(m32)--(33.5,5);
 			\draw(m21)--(28,2);
 			\draw(m21)--(30,2);
 			\draw(m23)--(33,2);
 			\draw(m23)--(35,2);
 			\draw(m1)--(30.5,1);
 			\draw(m1)--(32.5,1);
 			\draw[dotted,line width=0.5](29.5,0)--(33.5,0);
 			\draw[dotted,line width=0.5](28.5,1.75)--(29.5,1.75);
 			\draw[dotted,line width=0.5](33.5,1.75)--(34.5,1.75);
 			\draw(m22)--(29.5,3);
 			\draw(m22)--(33.5,3);
 			\draw[dotted,line width=0.5pt](30.5,2.5)--(32.5,2.5);
 			\draw[dotted,line width=0.5pt](30,4.5)--(31,4.5);
 			\draw[dotted,line width=0.5pt](32,4.5)--(33,4.5);
 			\node(b2) at (36,1)[minimum size=0pt,label=right:{$\Big)$}]{};
   \path[-,font=\scriptsize]
            (m1)  edge node[descr]{{\tiny{$k_1$}}} (m21)
 			(m1) edge node[descr]{{\tiny{$k_j$}}} (m22)
            (m1) edge node[descr]{{\tiny{$k_q$}}} (m23)
            (m22) edge node[descr]{{\tiny{$t_1$}}} (m31)
            (m22) edge node[descr]{{\tiny{$t_k$}}} (m32);
            \end{scope}
 		\end{tikzpicture}
 	\end{eqnarray*}
 	Here, the pre-Jacobi identity \meqref{Eq: pre-jacobi} is implicitly used in the last equation above. Notice that  all  trees appear twice with opposite signs. Thus we have $\partial^2(\nu_n)=0$. 

 Finally, note that the maps $\varepsilon$ and $\eta$ are  strict morphisms of homotopy cooperads. This completes the proof.
 \end{proof}

\begin{defn}\mlabel{de:Koszul dual homotopy cooperad}
	The Hadamard product  $\mathscr{S} (\Dif^\ac)\ot_{\mathrm{H}} \cals^{-1}$ is called the \name{Koszul dual homotopy cooperad} of $\Dif$, and is denoted by $\Dif^\ac$.
\end{defn}

To be precise, let $sm_n=\widetilde{m_n}\otimes \varepsilon_n, n\geqslant 1$ and $sd_n=\widetilde{d_n}\otimes \varepsilon_n, n\geqslant 1$. Then  the underlying graded collection of $ \Dif^\ac$ is $ \Dif^\ac(n)=\bfk sm_n\oplus \bfk sd_n$ with $|sm_n|=n-1$ and $|sd_n|=n$.
 The family of operations $\{\Delta_T\}_{T\in \frakt}$ defining its homotopy cooperad structure is given by the following formulas.
\begin{itemize}
	\item[(i)] For the element $\widetilde{m_n}\in\mathscr{S} (\Dif^\ac)(n)$ and $T$ of type $\mathrm{(i)}$, define
	\begin{eqnarray*}
		\begin{tikzpicture}[scale=0.7,descr/.style={fill=white}]
			\tikzstyle{every node}=[thick,minimum size=5pt, inner sep=1pt]
			\node(r) at (0,-0.5)[minimum size=0pt,rectangle]{};
			\node(v-2) at(-2,0.5)[minimum size=0pt, label=left: {$\Delta_T(sm_n)= (-1)^{(j-1)(n-i-j+1)}$}]{};
			\node(v0) at (0,0)[draw,rectangle]{{\small $sm_{n-j+1}$}};
			\node(v1-1) at (-1.5,1){};
			\node(v1-2) at(0,1)[draw,rectangle]{\small$sm_j$};
			\node(v1-3) at(1.5,1){};
			\node(v2-1)at (-1,2){};
			\node(v2-2) at(1,2){};
			\draw(v0)--(v1-1);
			\draw(v0)--(v1-3);
			\draw(v1-2)--(v2-1);
			\draw(v1-2)--(v2-2);
			\draw[dotted](-0.4,1.5)--(0.4,1.5);
			\draw[dotted](-0.5,0.5)--(-0.1,0.5);
			\draw[dotted](0.1,0.5)--(0.5,0.5);
			\path[-,font=\scriptsize]
			(v0) edge node[descr]{{\tiny$i$}} (v1-2);
		\end{tikzpicture}
	\end{eqnarray*}
	\item[(ii)] For the element $\widetilde{d_n}\in\mathscr{S} (\Dif^\ac)(n)$ and  $T$ of type $\mathrm{(i)}$, define
	\begin{eqnarray*}
		\begin{tikzpicture}[scale=0.7,descr/.style={fill=white}]
			\tikzstyle{every node}=[thick,minimum size=5pt, inner sep=1pt]
			\node(r) at (0,-0.5)[minimum size=0pt,rectangle]{};
			\node(va) at(-2,0.5)[minimum size=0pt, label=left:{$\Delta_T(sd_n)=(-1)^{(j-1)(n-j-i+1)}$}]{};
			\node(vb) at(-1.5,0.5)[minimum size=0pt ]{};
			\node(vc) at (0,0)[draw, rectangle]{\small $sd_{n-j+1}$};
			\node(v1) at(0,1)[draw,rectangle]{\small $sm_j$};
            \node(v0) at(-1.5,1){};
            \node(v2) at(1.5,1){};
			\node(v2-1)at (-1,2){};
			\node(v2-2) at(1,2){};
			\node(vd) at(-0.5,-1.5)[minimum size=0, label=left:{$+(-1)^{(j-1)(n-i-j+1)+n-j}$}]{};
			\node(ve) at (2,-2)[draw, rectangle]{\small $sm_{n-j+1}$};
			\node(ve1) at (2,-1)[draw,rectangle]{\small $sd_j$};
			\node(ve2-1) at(1,0){};
            \node(ve0) at (0.5,-1){};
            \node(ve2) at (3.5,-1){};
			\node(ve2-2) at(3,0){};
			\draw(v1)--(v2-1);
			\draw(v1)--(v2-2);
			\draw[dotted](-0.4,1.5)--(0.4,1.5);
            \draw(vc)--(v0);
            \draw(vc)--(v2);
            \draw(ve)--(ve0);
            \draw(ve)--(ve2);
			\draw(ve1)--(ve2-1);
			\draw(ve1)--(ve2-2);
			\draw[dotted](1.6,-0.5)--(2.4,-0.5);
            \draw[dotted](-0.5,0.5)--(0,0.5);
            \draw[dotted](0,0.5)--(0.5,0.5);
            \draw[dotted](1.5,-1.5)--(2,-1.5);
            \draw[dotted](2,-1.5)--(2.5,-1.5);
            \path[-,font=\scriptsize]
			(vc) edge node[descr]{{\tiny$i$}} (v1);
            \path[-,font=\scriptsize]
			(ve) edge node[descr]{{\tiny$i$}} (ve1);
		\end{tikzpicture}
	\end{eqnarray*}
	
	\item[(iii)] For the element $\widetilde{d_n}\in\mathscr{S} (\Dif^\ac)(n)$ and tree $T$ of type $\mathrm{(ii)}$, define
	\begin{eqnarray*}
		\begin{tikzpicture}[scale=0.7,descr/.style={fill=white}]
			\tikzstyle{every node}=[minimum size=4pt, inner sep=1pt]
			\node(v-2) at (-5,2)[minimum size=0pt, label=left:{$\Delta_T(sd_n)=$}]{};
			\node(v-1) at(-5,2)[minimum size=0pt,label=right:{$(-1)^\alpha \lambda^{q-1}$}]{};
			\node(v1-1) at (-2,1){};
			\node(v1-2) at(0,1.2)[rectangle,draw]{\small $sm_p$};
			\node(v1-3) at(2,1){};
			\node(v2-1) at(-1.9,2.6){};
			\node(v2-2) at (-0.9, 2.8)[rectangle,draw]{\small$sd_{l_1}$};
			\node(v2-3) at (0,2.9){};
			\node(v2-4) at(0.9,2.8)[rectangle,draw]{\small $sd_{l_q}$};
			\node(v2-5) at(1.9,2.6){};
			\node(v3-1) at (-1.5,3.5){};
			\node(v3-2) at (-0.3,3.5){};
			\node(v3-3) at (0.3,3.5){};
			\node(v3-4) at(1.5,3.5){};
			\draw(v1-2)--(v2-1);
			\draw(v1-2)--(v2-3);
			\draw(v1-2)--(v2-5);
			\path[-,font=\scriptsize]
			(v1-2) edge node[descr]{{\tiny$k_1$}} (v2-2)
			edge node[descr]{{\tiny$k_{q}$}} (v2-4);
			\draw(v2-2)--(v3-1);
			\draw(v2-2)--(v3-2);
			\draw(v2-4)--(v3-3);
			\draw(v2-4)--(v3-4);
			\draw[dotted](-0.5,2.4)--(-0.1,2.4);
			\draw[dotted](0.1,2.4)--(0.5,2.4);
			\draw[dotted](-1.4,2.4)--(-0.8,2.4);
			\draw[dotted](1.4,2.4)--(0.8,2.4);
			\draw[dotted](-1.1,3.2)--(-0.6,3.2);
			\draw[dotted](1.1,3.2)--(0.6,3.2);
		\end{tikzpicture}
	\end{eqnarray*}
where $$\alpha=\sum_{s=1}^q(l_s-1)(p-k_s)+q(p-1) + \sum_{t=1}^{q-1}(q-t)l_t;$$
	\item[(iv)]All other components of $\Delta_T, T\in \frakt$ vanish.
\end{itemize}

\section{Operad of homotopy differential algebras with weight and minimal model}
\mlabel{sec:homomodel}
Now, we are going to introduce the notion of homotopy differential algebras and their governing dg operad and show that this dg operad is the minimal model of the operad of differential algebras with weight.

\subsection{Operad of homotopy differential algebras with weight}\
\mlabel{ss:operad}

\begin{defn}
The \name{operad  $\Difinfty$ of homotopy differential algebras of weight $\lambda$} is defined to be the cobar construction  $\Omega(\Dif^\ac)$ of  $\Dif^\ac$.
\end{defn}

A direct inspection gives the following description of  $\Difinfty$.
\begin{prop}\mlabel{def: expanded def of homotopy differential algebras}
Let $\mathcal{O}=(\mathcal{O}(1),\dots,\mathcal{O}(n),\dots)$ be the graded collection where  $\mathcal{O}(1)=\bfk d_1$ with $ |d_1|=0$ and for $n\geqslant 2$,  $\mathcal{O}(n)=\bfk d_n\oplus \bfk m_n$ with $ |d_n|=n-1, |m_n|=n-2$.
The operad  $\Difinfty$ of homotopy differential algebras of weight $\lambda$ is the differential graded operad   $(\mathcal{F(O)},\partial)$,  where the underlying free graded  operad is generated by the graded collection $\mathcal{O}$ and the action of the differential $\partial$ on generators is given by the following equations.
     For each $n\geqslant 2$,
\begin{equation}\mlabel{eq:diffgen1}   \partial ({m_n}) =\sum_{j=2}^{n-1}\sum_{i=1}^{n-j+1}(-1)^{i+j(n-i)}m_{n-j+1}\circ_i m_j\end{equation}
and for each $n\geqslant 1$,
{\small\begin{align}\mlabel{eq:diffgen2}  	\partial (d_n)&=-\sum_{j=2}^{n}\sum_{i=1}^{n-j+1}(-1)^{i+j(n-i)}d_{n-j+1}\circ_i m_j\\
\notag &-\hspace{-.7cm}\sum\limits_{\substack{1\leqslant k_1<\dots<k_{q}\leqslant p\\l_1+\dots+l_q+p-q=n\\l_1, \dots, l_q\geqslant 1,  2\leqslant p\leqslant n,1\leqslant q\leqslant p}} \hspace{-.5cm}(-1)^{\xi}\lambda^{q-1}\Big(\cdots\big(\big((m_p\circ_{k_1}d_{l_1})\circ_{k_2+l_1-1}d_{l_2}\big)\circ_{k_3+l_1+l_2-2}d_{l_3}\big)\cdots\Big)\circ_{k_{q}+l_1+\dots+l_{q-1}-q+1} d_{l_q}
\end{align}}
where $\xi:=\sum_{s=1}^q(l_s-1)(p-k_s).$
\end{prop}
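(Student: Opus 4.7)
The plan is a direct unpacking of the cobar construction (Definition~\ref{Def: cobar construction}) applied to the explicit homotopy cooperad $\Dif^\ac$ whose structure maps $\Delta_T$ were tabulated in the three pictorial cases just above. Nothing conceptually new is needed; the content is entirely bookkeeping of Koszul signs.

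First I would identify the generators. Since the coaugmentation of $\Dif^\ac$ picks out $\bfk\, sm_1$, we have $\overline{\Dif^\ac}(1)=\bfk\, sd_1$ and $\overline{\Dif^\ac}(n)=\bfk\, sm_n\oplus\bfk\, sd_n$ for $n\geq 2$. Setting $m_n:=s^{-1}(sm_n)$ and $d_n:=s^{-1}(sd_n)$ and using $|sm_n|=n-1$, $|sd_n|=n$, the degree shift by $-1$ produces exactly $|m_n|=n-2$ (for $n\geq 2$), $|d_1|=0$, and $|d_n|=n-1$ (for $n\geq 1$), which matches the statement. By definition, $\Omega(\Dif^\ac)$ is the free graded operad on $s^{-1}\overline{\Dif^\ac}$, which is precisely $\mathcal{F}(\mathcal{O})$.

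Next I would read off the differential. By the cobar formula
\[
\partial(s^{-1}f)=-\sum_{T\in\mathfrak T}(s^{-1})^{\otimes \omega(T)}\circ \Delta_T(f),
\]
only trees of weight~$2$ contribute to $\partial(m_n)$, since $\Delta_T(sm_n)$ vanishes outside of the type~(i) pictures. Plugging in case~(i) with the stated sign $(-1)^{(j-1)(n-i-j+1)}$, and tracking the Koszul sign produced when the tensor of desuspensions $s^{-1}\otimes s^{-1}$ is pulled past the first factor $sm_{n-j+1}$ (contributing a sign $(-1)^{n-j}$), should collapse to the sign $(-1)^{i+j(n-i)}$ in equation~\eqref{eq:diffgen1}. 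For $\partial(d_n)$ the contributions split as follows: the first picture of case~(ii) supplies the $d_{n-j+1}\circ_i m_j$ summand; the second picture of case~(ii) supplies the $m_{n-j+1}\circ_i d_j$ summand; and case~(iii) for the type~(ii) trees supplies the iterated $\circ_{k_r+\cdots}$ brace-type terms decorated with $\lambda^{q-1}$ and the sign $\xi=\sum_{s=1}^q(l_s-1)(p-k_s)$. In each case I would expand $(s^{-1})^{\otimes \omega(T)}$ using the standard Koszul rule $(s^{-1}\otimes s^{-1})(x\otimes y)=(-1)^{|x|}s^{-1}x\otimes s^{-1}y$ iterated, and then verify that the accumulated sign combined with the sign appearing in $\Delta_T$ simplifies modulo $2$ to the sign shown in~\eqref{eq:diffgen2}; the extra sign $(-1)^{q(p-1)+\sum_{t=1}^{q-1}(q-t)l_t}$ from case~(iii) will cancel against the desuspension signs, leaving just $\xi$.

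The main obstacle is therefore purely the sign arithmetic: organizing, in the type~(ii) case with $q$ many $sd$-factors of differing degrees interleaved among $sm$-factors, the simultaneous pass of $(s^{-1})^{\otimes(q+1)}$ through the tensor, and matching the residual parity against $\xi$. I would handle this by a short lemma: for a planar tensor $x_1\otimes\cdots\otimes x_r$ with $|x_k|=a_k$, one has $(s^{-1})^{\otimes r}(x_1\otimes\cdots\otimes x_r)=(-1)^{\sum_{k=1}^{r}(r-k)a_k+\binom{r}{2}}s^{-1}x_1\otimes\cdots\otimes s^{-1}x_r$, after which the verification becomes mechanical in each of the three cases. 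Once all three computations are reconciled with~\eqref{eq:diffgen1} and~\eqref{eq:diffgen2}, the proof is complete.
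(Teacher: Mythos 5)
Your proposal is correct and follows exactly the route the paper takes (the paper offers nothing beyond ``a direct inspection gives the following description''), namely unpacking the cobar construction of $\Dif^\ac$, matching generators and degrees, and checking that the Koszul signs from $(s^{-1})^{\otimes\omega(T)}$ combine with the signs built into $\Delta_T$ to yield $(-1)^{i+j(n-i)}$ and $(-1)^{\xi}$; your weight-two checks are right, and your identification of the $q=1$ terms of the second sum with the second picture of case~(ii) is the correct bookkeeping. One caveat: the auxiliary desuspension lemma you state carries an extraneous factor $(-1)^{\binom{r}{2}}$ that contradicts the rule $(s^{-1}\otimes s^{-1})(x\otimes y)=(-1)^{|x|}\,s^{-1}x\otimes s^{-1}y$ you correctly use for $r=2$ --- the plain Koszul rule gives only $(-1)^{\sum_{k}(r-k)a_k}$ --- so that term must be dropped before running the $q\geqslant 2$ sign verification.
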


\begin{remark}\mlabel{re:diffod}
 \begin{enumerate}
\item When $\lambda=0$, the operad  $\Difinfty$ is precisely the operad $\mathrm{AsDer}_\infty$  constructed by Loday in \mcite{Lod} by using Koszul duality for operads.
\item \label{it:diffod3} It is easy to see that
$\partial(m_2)=0=\partial(d_1)$. So the degree zero part of $\Difinfty$ is just the free non-graded operad generated by $m_2$ and $d_1$.
 This fact will be used in Lemma~\mref{Lem: H0 of Difinfty is Dif}.
\end{enumerate}
    \end{remark}

As free operads are constructed from   planar rooted trees, we use planar rooted  trees to display the dg operad $\Difinfty$ for later applications.
First, we use the following two kinds of corollas to represent generators of $\Difinfty$:
 \begin{figure}[h]
	\begin{tikzpicture}[grow'=up,scale=0.5]
		\tikzstyle{every node}=[thick,minimum size=6pt, inner sep=1pt]
		\node(r)[fill=black,circle,label=below:$\ \ \ \ \ \ \ \ \ \ \ \ \ \ m_n(n\geqslant 2)$]{}
		child{node(1){1}}
		 child{node(i) {}}
		child{node(n){n}};
		\draw [dotted,line width=0.5pt] (1)--(n);
	\end{tikzpicture}
	\hspace{8mm}
	\begin{tikzpicture}[grow'=up,scale=0.5]
		\tikzstyle{every node}=[thick,minimum size=6pt, inner sep=1pt]
		\node(r)[draw,circle,label=below:$\ \ \ \ \ \ \ \ \ \ \ \ \ \  d_n(n\geqslant 1)$]{}
		child{node(1){1}}
		child{node(i){}}
		child{node(n){n}};
		\draw [dotted,line width=0.5pt] (1)--(n);
	\end{tikzpicture}
\end{figure}

For example, the element   $(((m_3\circ_{1}d_4)\circ_3m_4)\circ_9m_2)\circ_{10}d_2$ can be represented by the following tree
{\tiny\begin{figure}[h]
	\begin{tikzpicture}[grow'=up,scale=0.4]
		\tikzstyle{every node}=[level distance=30mm,sibling distance=10em,thick,minimum size=1.5pt, inner sep=2pt]
		\node[fill=black,draw,circle,label=right:$\small m_3$]{}
		child{
			node[draw, circle, label=left:$d_4$](1){}
			child{node(1-1){}}
			child{node(1-2){}}
			child{
				node[fill=black, draw, circle,label=left:$m_4$](1-3){}
				child{node(1-3-1){}}
				child{node(1-3-2){}}
				child{node(1-3-3){}}
				child{node(1-3-4){}}
			}
			child{node(1-4){}}
		}
		child{node(2){}}
		child{
			node[fill=black,draw,circle,label=right:$m_2$](3){}
			child{node(3-1){}}
			child{
				node[circle, draw, label=right:$d_2$](3-2){}
				child{node(3-2-1){}}
				child{node(3-2-2){}}}
		};
	\end{tikzpicture}
\end{figure}
}

  Eqs. \meqref{eq:diffgen1} and \meqref{eq:diffgen2} can be represented by
\begin{eqnarray*}\hspace{-1.5cm}
	\begin{tikzpicture}[scale=0.4]
		\tikzstyle{every node}=[thick,minimum size=6pt, inner sep=1pt]
		\node(a) at (-4,0.5){\begin{large}$\partial$\end{large}};
		\node[circle, fill=black, label=right:$m_n (n\geqslant 2)$] (b0) at (-2,-0.5)  {};
		\node (b1) at (-3.5,1.5)  [minimum size=0pt,label=above:$1$]{};
		\node (b2) at (-2,1.5)  [minimum size=0pt,label=above:$$]{};
		\node (b3) at (-0.5,1.5)  [minimum size=0pt,label=above:$n$]{};
		\draw        (b0)--(b1);
		\draw        (b0)--(b2);
		\draw        (b0)--(b3);
		\draw [dotted,line width=1pt] (-3,1)--(-2.2,1);
		\draw [dotted,line width=1pt] (-1.8,1)--(-1,1);
	\end{tikzpicture}
	&
	\hspace{1mm}
	\begin{tikzpicture}[scale=0.4]
		\node(0){{$= \sum\limits_{j=2}^{n-1}\sum\limits_{i=1}^{n-j+1}(-1)^{i+j(n-i)}$}};
	\end{tikzpicture}
	&
	\begin{tikzpicture}[scale=0.5]
		\tikzstyle{every node}=[thick,minimum size=6pt, inner sep=1pt]
		\node(e0) at (0,-1.5)[circle, fill=black,label=right:$m_{n-j+1}$]{};
		\node(e1) at(-1.5,0){};
		\node(e2-0) at (0,-0.5){{\tiny$i$}};
		\node(e3) at (1.5,0){};
		\node(e2-1) at (0,0.5) [circle,fill=black,label=right: $m_j$]{};
		\node(e2-1-1) at (-1,1.5){};
		\node(e2-1-2) at (1, 1.5){};
		\draw [dotted,line width=1pt] (-0.7,-0.5)--(-0.2,-0.5);
		\draw [dotted,line width=1pt] (0.3,-0.5)--(0.8,-0.5);
		\draw [dotted,line width=1pt] (-0.4,1)--(0.4,1);
		\draw        (e0)--(e1);
		\draw         (e0)--(e3);
		\draw         (e0)--(e2-0);
		\draw         (e2-0)--(e2-1);
		\draw        (e2-1)--(e2-1-1);
		\draw        (e2-1)--(e2-1-2);
	\end{tikzpicture}	
\end{eqnarray*}

\begin{eqnarray*}
	\hspace{1.2cm}
	\begin{tikzpicture}[scale=0.4]
		\tikzstyle{every node}=[thick,minimum size=6pt, inner sep=1pt]
		\node(a) at (-4,0.5){\begin{large}$\partial$\end{large}};
		\node[circle, draw, label=right:$d_n(n\geqslant 1)$] (b0) at (-2,-0.5)  {};
		\node (b1) at (-3.5,1.5)  [minimum size=0pt,label=above:$1$]{};
		\node (b2) at (-2,1.5)  [minimum size=0pt,label=above:$$]{};
		\node (b3) at (-0.5,1.5)  [minimum size=0pt,label=above:$n$]{};
		\draw        (b0)--(b1);
		\draw        (b0)--(b2);
		\draw        (b0)--(b3);
		\draw [dotted,line width=1pt] (-3,1)--(-2.2,1);
		\draw [dotted,line width=1pt] (-1.8,1)--(-1,1);
	\end{tikzpicture}
	&\hspace{-1cm}
	\begin{tikzpicture}[scale=0.4]
		\node(0){{${\Large=}\ \ \ \ \ -\sum\limits_{j=2}^{n}\sum\limits_{i=1}^{n-j+1}(-1)^{i+j(n-i)}$}};
	\end{tikzpicture}
	& \hspace{-1.4cm}
	\begin{tikzpicture}[scale=0.6]
		\tikzstyle{every node}=[thick,minimum size=6pt, inner sep=1pt]
		\node(e0) at (0,-1.5)[circle, draw,label=right:$d_{n-j+1}$]{};
		\node(e1) at(-1.5,-0.3){};
		\node(e2-0) at (0,-0.7){{\tiny$i$}};
		\node(e3) at (1.5,-0.3){};
		\node(e2-1) at (0,0) [draw,circle,fill=black,label=right: $m_j$]{};
		\node(e2-1-1) at (-1,1){};
		\node(e2-1-2) at (1, 1){};
		\draw [dotted,line width=1pt] (-0.7,-0.5)--(-0.2,-0.5);
		\draw [dotted,line width=1pt] (0.3,-0.5)--(0.8,-0.5);
		\draw [dotted,line width=1pt] (-0.2,0.5)--(0.2,0.5);
		\draw        (e0)--(e1);
		\draw         (e0)--(e3);
		\draw         (e0)--(e2-0);
		\draw         (e2-0)--(e2-1);
		\draw        (e2-1)--(e2-1-1);
		\draw        (e2-1)--(e2-1-2);
	\end{tikzpicture}	\\
	&\hspace{1.5cm} \begin{tikzpicture}[scale=0.4]
		\node(0){{$-\sum\limits_{\substack{1\leqslant k_1<\dots<k_{q}\leqslant p\\l_1+\dots+l_q+p-q=n\\l_1, \dots, l_q\geqslant 1,  2\leqslant p\leqslant n,1\leqslant q\leqslant p}} (-1)^{\xi}\lambda^{q-1}$}};
	\end{tikzpicture}&
\hspace{-3mm}
		\begin{tikzpicture}[scale=0.7,descr/.style={fill=white}]
		\tikzstyle{every node}=[thick,minimum size=6pt, inner sep=1pt]
		\node(e0) at(0,0)[circle,draw, fill=black,label=below:$m_p$]{};
		\node(e1-1) at(-2.7,1.5){};
		\node(e1-2) at (-1.5,1.6)[circle, draw, label=right:$d_{l_1}$]{};
		\node(e1-3) at (-0.7,1.5){};
		\node(e1-4) at(0,1.6)[circle, draw, label=right:$d_{l_i}$]{};
		\node(e1-5) at(0.7,1.5){};
		\node(e1-6) at(1.5,1.6)[circle, draw, label=right:$d_{l_q}$]{};
		\node(e1-7) at(2.7,1.5){};
		\node(e2-1) at (-2,2.5){};
		\node(e2-2) at(-1,2.5){};
		\node(e2-3) at(-0.5,2.5){};
		\node(e2-4) at (0.5,2.5){};
		\node(e2-5) at (1,2.5){};
		\node(e2-6) at (2,2.5){};

\draw (e0)--(e1-1);
\draw (e0)--(e1-3);
\draw (e0)--(e1-5);
\draw(e0)--(e1-7);
\draw(e1-2)--(e2-1);
\draw(e1-2)--(e2-2);
\draw(e1-4)--(e2-3);
\draw(e1-4)--(e2-4);
\draw(e1-6)--(e2-5);
\draw(e1-6)--(e2-6);
\draw[dotted,line width=1pt](-1.4,0.9)--(1.4,0.9);
\draw[dotted, line width=1pt](-1.7,2.2)--(-1.3,2.2);
\draw[dotted, line width=1pt](-0.2,2.2)--(0.2,2.2);
\draw[dotted, line width=1pt](1.7,2.2)--(1.3,2.2);
\path[-,font=\scriptsize]
(e0) edge node[descr]{{\tiny$k_1$}} (e1-2)
      edge node[descr]{{\tiny$k_i$}} (e1-4)
      edge node[descr]{{\tiny$k_q$}}(e1-6);
	\end{tikzpicture}
\end{eqnarray*}

\subsection{The minimal model of  the operad of  differential algebras}
\mlabel{sec:model}\

The following result is the main result of this paper.
\begin{thm} \mlabel{thm:difmodel}
	The dg operad $\Difinfty$ is the minimal model of the operad $\Dif$.
\end{thm}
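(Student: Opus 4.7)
To establish that $(\Difinfty,\partial)$ is the minimal model of $\Dif$, the plan is to verify the three structural axioms of Definition~\mref{de:model} and then exhibit a surjective quasi-isomorphism $p\colon\Difinfty\twoheadrightarrow\Dif$. Quasi-freeness is immediate since $\Difinfty=\Omega(\Dif^\ac)$ is by construction the free graded operad on $s^{-1}\overline{\Dif^\ac}$. Decomposability is visible in \meqref{eq:diffgen1}--\meqref{eq:diffgen2}: every summand on the right-hand side is a composition of at least two generators, so $\partial$ lands in weight $\geq 2$. For axiom~\meqref{it:min2}, set $M_{(k)}=\bfk m_{k+1}\oplus\bfk d_k$ for $k\geq 1$; an inspection of \meqref{eq:diffgen1}--\meqref{eq:diffgen2} shows that every generator appearing on the right-hand side lies in a strictly lower layer than the generator being differentiated (note in particular that the extremal term $m_n\{d_1,\dots,d_1\}$ in $\partial(d_n)$ has $m_n\in M_{(n-1)}$, still strictly below $M_{(n)}$), so $\partial(M_{(k+1)})\subseteq \mathcal{F}\bigl(\bigoplus_{i\leq k}M_{(i)}\bigr)$.

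The projection $p$ is defined on generators by $p(m_2)=\mu$, $p(d_1)=d$, and $p(m_n)=p(d_n)=0$ for the remaining generators, and extended multiplicatively. Checking that $p$ is a morphism of dg operads amounts to verifying $p\circ\partial=0$ on generators. For $n\geq 4$ every summand of $\partial(m_n)$ carries a factor $m_{\geq 3}$, and for $n\geq 3$ every summand of $\partial(d_n)$ carries either an $m_{\geq 3}$- or a $d_{\geq 2}$-factor (a small case analysis on $p$, $q$ and $l_i$ in the second sum of \meqref{eq:diffgen2} handles the last point), all of which are killed by $p$. In the two remaining low-arity cases, $p(\partial m_3)$ equals $\mu\circ_1\mu-\mu\circ_2\mu$ and $p(\partial d_2)$ equals $d\circ_1\mu-\mu\circ_1 d-\mu\circ_2 d-\lambda(\mu\circ_1 d)\circ_2 d$, which are exactly the relations \meqref{eq:difod} and therefore vanish in $\Dif$. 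So $p$ is a well-defined surjective morphism of dg operads.

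It remains to show $H_\bullet(p)$ is an isomorphism. In degree zero, Remark~\mref{re:diffod}\meqref{it:diffod3} identifies $\Difinfty_0$ with the free ungraded operad on $m_2,d_1$, and the subspace $\partial(\Difinfty_1)\subseteq\Difinfty_0$ is precisely the operadic ideal generated by $\partial(m_3)$ and $\partial(d_2)$ (since the degree-$1$ part of $\Difinfty$ consists of composites with a single $m_3$- or $d_2$-factor, and $\partial$ acts as a derivation with all other factors in its kernel). Under the above identification these are exactly the two defining relations of $\Dif$, so $p$ induces an isomorphism $H_0(\Difinfty)\xrightarrow{\cong}\Dif$.

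The main obstacle is proving $H_{>0}(\Difinfty)=0$. My plan is to introduce a filtration on $\Difinfty(n)$---for example the $\lambda$-adic filtration, after treating $\lambda$ as a formal parameter---whose associated graded at $\lambda=0$ coincides with Loday's resolution $\mathrm{AsDer}_\infty\simeq\Dif_{|\lambda=0}$ recalled in Remark~\mref{re:diffod}, known to be acyclic in positive degrees by the classical Koszul duality for quadratic operads~\mcite{Lod}. One then argues that the resulting spectral sequence collapses at $E^1$ and lifts the acyclicity to arbitrary $\lambda$. The hardest point is that the cubic corolla terms $m_p\{d_{l_1},\dots,d_{l_q}\}$ with $q\geq 2$ in \meqref{eq:diffgen2}---precisely the terms whose presence led Loday to leave the non-zero-weight case open---have the potential to create new cocycles; I expect to neutralise them by a double induction on arity and on the total $d$-weight of a tree, with careful bookkeeping of the signs $(-1)^{i+j(n-i)}$ and $(-1)^\xi$ to produce pairwise cancellations, ultimately exhibiting a PBW-type normal form $\mu^{(n-1)}(d^{r_1}\otimes\cdots\otimes d^{r_n})$ in each arity $n$ that maps under $p$ isomorphically onto the standard basis of $\Dif(n)$.
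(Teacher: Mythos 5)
Your treatment of the formal axioms and of degree zero is sound and agrees with the paper: quasi-freeness and conditions (i)--(ii) of Definition~\mref{de:model} hold by construction, the map $p$ is a well-defined surjection of dg operads, and your identification of the image of $\partial$ on the degree-one part of $\Difinfty$ with the operadic ideal generated by $\partial(m_3)$ and $\partial(d_2)$ reproduces Lemma~\mref{Lem: H0 of Difinfty is Dif}, giving $\rmH_0(\Difinfty)\cong\Dif$. The gap is exactly where you flag it: the vanishing of $\rmH_{>0}(\Difinfty)$ is the entire content of the theorem, and what you offer for it is a plan rather than a proof. Two concrete obstacles stand in the way of the plan as stated. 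First, since $\lambda$ is a scalar in $\bfk$ and not a formal variable, the only natural substitute for a ``$\lambda$-adic'' filtration is the filtration by the number of $d$-labelled vertices; but this filtration is unbounded within a fixed arity and homological degree, because arbitrarily long chains of the degree-zero, arity-one generator $d_1$ occur, so convergence of the associated spectral sequence is not automatic and must be argued. Second, ``collapse at $E^1$'' is not the relevant claim: what you need is that $E^1$ vanishes in positive degrees (which would follow from Loday's acyclicity of $\mathrm{AsDer}_\infty$ applied to the associated graded differential) \emph{together with} convergence. You then concede that the corolla terms $m_p\{d_{l_1},\dots,d_{l_q}\}$ with $q\geqslant 2$ may create new cocycles and say you ``expect to neutralise them'' by a double induction yielding a PBW-type normal form; that expectation is essentially the statement to be proved, so the argument is incomplete at its core.

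For comparison, the paper takes a different and fully explicit route to acyclicity: it constructs a contracting homotopy $H$ of degree $1$ with $\partial H+H\partial=\Id$ in each positive degree (Proposition~\mref{Prop: homotopy}). This is done by putting a well order (graded path-lexicographic) on tree monomials, identifying for each positive-degree generator $S$ the leading monomial $\widehat S$ of $\partial S$ (the ``typical'' divisors), and defining $H$ by a Noetherian recursion that replaces the left-upper-most typical divisor of an ``effective'' tree monomial by $S$ and iterates on the strictly smaller remainder (Lemmas~\mref{Lem: homotopy well defined} and~\mref{Lem: Induction}). To complete your own route you would have to either justify convergence of the spectral sequence despite the unbounded filtration and carry out the claimed cancellations, or replace the spectral sequence by a rewriting-style contracting homotopy of this kind. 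As written, the proposal does not yet prove the theorem.
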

The proof of Theorem~\mref{thm:difmodel} will be carried out in the rest of this subsection.

\begin{lem} \mlabel{Lem: H0 of Difinfty is Dif}
 The  natural surjection
 \begin{equation} \label{eq:proj}
 	p:\Difinfty\rightarrow \Dif
 \end{equation}
induces an isomorphism   $\rmH_0(\Difinfty)\cong \Dif$.
\end{lem}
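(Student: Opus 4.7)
The plan is to identify $\Difinfty_0$ explicitly and then show that the degree-one boundaries produce exactly the defining relations of $\Dif$. Since $\Difinfty$ has no generators in negative degree, $\Difinfty_{-1}=0$, and therefore $\rmH_0(\Difinfty) = \Difinfty_0/\partial(\Difinfty_1)$. By Remark~\mref{re:diffod}, the only generators of $\Difinfty$ in degree zero are $m_2$ and $d_1$, so $\Difinfty_0$ is the free (ungraded) operad $\mathcal{F}(m_2,d_1)$. The projection $p$ in \eqref{eq:proj} restricts to a surjection $\Difinfty_0 \twoheadrightarrow \Dif$ sending $m_2\mapsto\mu$, $d_1\mapsto d$, so the task reduces to showing that $\partial(\Difinfty_1)$ coincides with the preimage, in $\mathcal{F}(m_2,d_1)$, of the defining ideal $I$ of $\Dif$ (Definition~\mref{de:difod}).

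Next, from $|m_n|=n-2$ and $|d_n|=n-1$, the only generators of degree one are $m_3$ and $d_2$. Hence $\Difinfty_1$ is spanned by trees containing exactly one vertex labelled by $m_3$ or $d_2$, with all remaining vertices labelled by $m_2$ or $d_1$. I would then evaluate $\partial(m_3)$ and $\partial(d_2)$ by plugging $n=3$ into \meqref{eq:diffgen1} and $n=2$ into \meqref{eq:diffgen2}. The expected outcome is
\[
\partial(m_3) = \pm(m_2\circ_1 m_2 - m_2\circ_2 m_2),
\]
\[
\partial(d_2) = \pm\bigl(d_1\circ_1 m_2 - m_2\circ_1 d_1 - m_2\circ_2 d_1 - \lambda\,(m_2\circ_1 d_1)\circ_2 d_1\bigr),
\]
matching (up to sign) the two generators of $I$ in Eq.~\meqref{eq:difod}.

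Finally, since $\partial$ is a derivation on the free graded operad $\mathcal{F(O)}$ and $\partial$ vanishes on $m_2$ and $d_1$, applying $\partial$ to a basis tree of $\Difinfty_1$ simply replaces its unique degree-one vertex by $\partial(m_3)$ or $\partial(d_2)$, with appropriate signs from the Koszul rule. Letting the tree and the vertex position vary, we conclude that $\partial(\Difinfty_1)$ equals the operadic ideal in $\mathcal{F}(m_2,d_1)$ generated by $\partial(m_3)$ and $\partial(d_2)$; combined with the computation above, this ideal agrees with $I$, so the induced map $\rmH_0(\Difinfty)\to\Dif$ is the desired isomorphism.

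The main obstacle is purely clerical: tracking the signs in $\partial(m_3)$ and especially in $\partial(d_2)$ via the somewhat intricate formula \meqref{eq:diffgen2}. However, for $n=2$ only the tuples with $p=2$, $q\in\{1,2\}$ and $l_1=\dots=l_q=1$ contribute, in which case $\xi=\sum_s(l_s-1)(p-k_s)$ vanishes for every surviving summand, rendering the sign verification immediate and confirming the identification with the generators of $I$.
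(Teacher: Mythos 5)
Your proposal is correct and follows essentially the same route as the paper: identify $\Difinfty_0$ as the free operad on $m_2,d_1$ via Remark~\mref{re:diffod}, compute $\partial(m_3)$ and $\partial(d_2)$ from Eqs.~\meqref{eq:diffgen1}--\meqref{eq:diffgen2}, and match them with the generators of $I$ in Eq.~\meqref{eq:difod}. Your additional observations --- that $\Difinfty_{-1}=0$, that the degree-one part is spanned by trees with a single $m_3$ or $d_2$ vertex, and that the derivation property forces $\partial(\Difinfty_1)$ to be exactly the operadic ideal generated by $\partial(m_3)$ and $\partial(d_2)$ --- merely spell out steps the paper leaves implicit, and your sign bookkeeping (including $\xi=0$ for all surviving summands when $n=2$) checks out.
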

\begin{proof} By Remark~\mref{re:diffod} \eqref{it:diffod3},
the degree zero part of $\Difinfty$ is the free (non-graded) operad  generated by $m_2$ and $d_1$. By definition, one has
$$\partial(m_3)=-\mu\circ_1\mu+\mu\circ_2\mu,$$
$$\partial(d_2)= d_1\circ_1 m_2-(m_2\circ_1d_1+m_2\circ_2 d_1+\lambda (m_2\circ_1 d_1)\circ_2 d_1). $$
So  the   surjection $p:\Difinfty\rightarrow \Dif$ sending $m_2$ to $\mu $ and $d_1$ to $d$ induces  $\rmH_0(\Difinfty)\cong \Dif$.
\end{proof}

Obviously, the differential $\partial$ of $\Difinfty$ satisfies Definition~\mref{de:model}~\eqref{it:min1} and \eqref{it:min2},  so in order to complete the proof of Theorem~\mref{thm:difmodel}, we only need to show that $p:\Difinfty\rightarrow \Dif$ is a quasi-isomorphism. For this purpose, we are going to build a homotopy map, i.e., a degree $1$ map  $H:\Difinfty\rightarrow \Difinfty$ such that in each  positive degree
$$\partial\circ H+H\circ \partial =\Id.$$

We need the following notion of graded  path-lexicographic ordering on $\Difinfty$.

Each tree monomial gives rise to a path sequence~\cite[Chapter 3]{BD16}.  More precisely,
 to a tree monomial $T$ with  $n$ leaves (written as $\mbox{arity}(T)=n$), we can associate a sequence   $(x_1, \dots, x_n)$  where  $x_i$ is the word formed by   generators of $\Difinfty$ corresponding to the vertices along the unique path from the root of $T$  to its $i$-th leaf.
We define an order on the graded tree monomials as follows.
For graded tree monomials $T$ and $T'$, define $T>T'$ if
 \begin{enumerate}
 	\item either $\mbox{arity}(T)>\mbox{arity}(T')$;
 	\item or $\mbox{arity}(T)=\mbox{arity}(T')$, and $\deg(T)>\deg(T')$, where $\deg(T)$ is the sum of the degrees of all generators of  $\Difinfty$ appearing in the   tree monomial $T$;
 	\item or $\mbox{arity}(T)=\mbox{airty}(T')(=n), \deg(T)=\deg(T')$, and the path sequences   $(x_1,\dots,x_n)$ and $(x'_1,\dots,x_n')$ associated to $T$ and $T'$ satisfy $(x_1,\dots,x_n)>(x_1',\dots,x_n')$ with respect to the length-lexicographic order of words induced by \[m_2<d_1<m_3<d_2<\dots<m_n<d_{n-1}<m_{n+1}<d_n<\dots.\]
 \end{enumerate}
 It is readily seen that $>$ is a well order.

Given a     linear combination of tree monomials, its \textbf{leading monomial} is the largest tree monomial appearing with nonzero coefficient in this  linear combination,  and the coefficient of the leading monomial is called the \textbf{leading coefficient}.

	Let $S$ be a generator of degree $\geqslant 1$ in $ \Difinfty$. Denote the leading monomial of $\partial S$ by $\widehat{S}$ and the  coefficient of $\widehat{S}$ in $\partial S$ by $c_S$.   It is easily seen that the coefficient $c_S$ is always $\pm 1$.
A tree monomial of the form $\widehat{S}$  with the degree of $S$ being positive  is called \textbf{typical}.
More precisely, typical tree monomials are of the following forms:
\begin{equation*}
	\begin{tikzpicture}[scale=0.5]
	\tikzstyle{every node}=[thick,minimum size=4pt, inner sep=1pt]
\node(e0) at (0,0)[circle, draw, fill=black, label=right:$m_n\quad (n\geqslant 2)$]{};
\node(e1-1) at (-1.5,1.5)[circle, draw,fill=black, label=right:$m_2$]{};
\node(e1-2) at(1.5,1.5){};
\node(e2-1) at(-2.5,2.5){};
\node(e2-2) at(-0.5,2.5){};
\draw(e0)--(e1-1);
\draw(e0)--(e1-2);
\draw(e1-1)--(e2-1);
\draw(e1-1)--(e2-2);
\draw[dotted,line width=1pt](-0.7,0.75)--(0.7,0.75);
	\end{tikzpicture}
\hspace{8mm}
\begin{tikzpicture}[scale=0.5]
\tikzstyle{every node}=[thick,minimum size=4pt, inner sep=1pt]
\node(e0) at (0,0)[circle, draw,  label=right:$d_n\quad (n\geqslant 1)$]{};
\node(e1-1) at (-1.5,1.5)[circle, draw,fill=black, label=right:$m_2$]{};
\node(e1-2) at(1.5,1.5){};
\node(e2-1) at(-2.5,2.5){};
\node(e2-2) at(-0.5,2.5){};
\draw(e0)--(e1-1);
\draw(e0)--(e1-2);
\draw(e1-1)--(e2-1);
\draw(e1-1)--(e2-2);
\draw[dotted,line width=1pt](-0.7,0.75)--(0.7,0.75);
	
\end{tikzpicture}
	\end{equation*}
 where the first one is the leading monomial of $\partial m_{n+1}, n\geqslant 2$, the second being that of $\partial d_{n+1}, n\geqslant 1$.

\begin{defn}
We call a tree monomial \textbf{ effective}  if  it satisfies the the following conditions:
\begin{enumerate}
	\item There exists a typical divisor $\widehat{S}$ in $T$;
 	\item On the path from  the root $v$ of $\widehat{S}$   to    the leftmost leaf   $l$ of $T$ above  $v$,    there are no other typical divisors and there are no vertices of positive degree except possibly $v$ itself;
 	\item For a leaf $l'$ of $T$ which is located on the left of $l$, there are no vertices of positive degree and no typical divisors on the path from the root of $T$ to $l'$.
\end{enumerate}
The divisor $\widehat{S}$ is unique if exists, and is called the {\bf effective divisor} of $T$ and $l$ is called the {\bf effective leaf} of $T$.
\end{defn}

Intuitively, the effective divisor of a tree monomial $T$ is the left-upper-most typical divisor of $T$.
It follows by the definition that for an effective divisor $T'$ in $T$ with the effective leaf $l$, no vertex in $T'$ belongs to the path from the root of $T$ to any leaf $l'$ located on the left of $l$.

\begin{exam}
	Consider the following three tree monomials with the same underlying tree:
	\begin{eqnarray*}
	\begin{tikzpicture}[scale=0.4]
		\tikzstyle{every node}=[thick,minimum size=4pt, inner sep=1pt]
		\node(r) at(0,-0.5)[minimum size=0pt, label=below:$(T_1)$]{};
		\node(0) at(0,0)[circle,draw, fill=black]{};
		\node(1-1)at(-1,1)[circle,draw]{};
		\node(1-2)at(1,1)[circle,draw,fill=black]{};

		\node(2-1) at(-2,2){};
		\node(2-2) at(0,2) [circle,draw,fill=black]{};
		\node(2-3) at(1,2){};
         \node(2-4) at(2,2){};
		\node(3-1) at (-1,3)[circle, draw]{};
		\node(3-2) at(1,3)[circle,draw,fill=black]{};
		\node(4-1) at (-2,4)[circle, draw, fill=black]{};
		\node(4-2) at (0,4)[circle,draw,fill=black]{};
		\node(4-3) at(2,4){};
		\node(5-1) at(-3,5)[circle,draw]{};
		\node(5-2) at(-1.3,4.7){};
		\node(5-3) at(-0.7,4.7){};
		\node(5-4)at (0.7,4.7){};
		\node(6-1) at (-3.7,5.7){};
		\draw(0)--(1-1);
		\draw(0)--(1-2);
		\draw(1-1)--(2-1);
		\draw(1-2)--(2-2);
		\draw(1-2)--(2-3);
        \draw(1-2)--(2-4);
		\draw(2-2)--(3-1);
		\draw(2-2)--(3-2);
		\draw(3-1)--(4-1);
		\draw(3-2)--(4-2);
		\draw(3-2)--(4-3);
		\draw(4-1)--(5-1);
		\draw(4-1)--(5-2);
		\draw(4-2)--(5-3);
		\draw(4-2)--(5-4);
		\draw(5-1)--(6-1);
		\draw[dashed,red](-2.5,3.8) to [in=150, out=120] (-1.9,4.4) ;
		\draw[dashed,red](-2.5,3.8)--(-1.2,2.5);
		\draw[dashed, red](-0.6,3.1)--(-1.9,4.4);
		\draw[dashed,red] (-0.6,3.1)to [in=-30, out=-60] (-1.2,2.5);
	\end{tikzpicture}	
\begin{tikzpicture}[scale=0.4]
	\tikzstyle{every node}=[thick,minimum size=4pt, inner sep=1pt]
		\node(r) at(0,-0.5)[minimum size=0pt, label=below:$(T_2)$]{};
		\node(0) at(0,0)[circle,draw, label=right:${\color{red}\times}$]{};
		\node(1-1)at(-1,1)[circle,draw]{};
		\node(1-2)at(1,1)[circle,draw,fill=black]{};
		\node(2-1) at(-2,2){};
		\node(2-2) at(0,2) [circle,draw,fill=black]{};
		\node(2-3) at(1,2){};
         \node(2-4) at(2,2){};
		\node(3-1) at (-1,3)[circle, draw]{};
		\node(3-2) at(1,3)[circle,draw,fill=black]{};
		\node(4-1) at (-2,4)[circle, draw, fill=black]{};
		\node(4-2) at (0,4)[circle,draw,fill=black]{};
		\node(4-3) at(2,4){};
		\node(5-1) at(-3,5)[circle,draw]{};
		\node(5-2) at(-1.3,4.7){};
		\node(5-3) at(-0.7,4.7){};
		\node(5-4)at (0.7,4.7){};
		\node(6-1) at (-3.7,5.7){};
		\draw(0)--(1-1);
		\draw(0)--(1-2);
		\draw(1-1)--(2-1);
		\draw(1-2)--(2-2);
		\draw(1-2)--(2-3);
\draw(1-2)--(2-4);
	\draw(2-2)--(3-1);
		\draw(2-2)--(3-2);
		\draw(3-1)--(4-1);
		\draw(3-2)--(4-2);
		\draw(3-2)--(4-3);
		\draw(4-1)--(5-1);
		\draw(4-1)--(5-2);
		\draw(4-2)--(5-3);
		\draw(4-2)--(5-4);
		\draw(5-1)--(6-1);
			\draw[dashed,red](-1.5,2.8) to [in=150, out=120] (-0.9,3.4) ;
		\draw[dashed,red](-1.5,2.8)--(-0.2,1.5);
		\draw[dashed, red](0.4,2.1)--(-0.9,3.4);
		\draw[dashed,red] (0.4,2.1)to [in=-30, out=-60] (-0.2,1.5);
	\end{tikzpicture}
	\begin{tikzpicture}[scale=0.4]
		\tikzstyle{every node}=[thick,minimum size=4pt, inner sep=1pt]
		\node(r) at(0,-0.5)[minimum size=0pt, label=below:$(T_3)$]{};
		\node(0) at(0,0)[circle,draw, fill=black]{};
		\node(1-1)at(-1,1)[circle,draw]{};
		\node(1-2)at(1,1)[circle,draw,fill=black]{};
		\node(2-1) at(-2,2){};
		\node(2-2) at(0,2) [circle,draw,fill=black]{};
		\node(2-3) at(1,2){};
         \node(2-4) at(2,2){};
		\node(3-1) at (-1,3)[circle, draw]{};
		\node(3-2) at(1,3)[circle,draw,fill=black]{};
		\node(4-1) at (-2,4)[circle, draw, label=right:$\color{red}\times$]{};
		\node(4-2) at (0,4)[circle,draw,fill=black]{};
		\node(4-3) at(2,4){};
		\node(5-1) at(-3,5)[circle,draw]{};
		\node(5-2) at(-1.3,4.7){};
		\node(5-3) at(-0.7,4.7){};
		\node(5-4)at (0.7,4.7){};
		\node(6-1) at (-3.7,5.7){};
		\draw(0)--(1-1);
		\draw(0)--(1-2);
		\draw(1-1)--(2-1);
		\draw(1-2)--(2-2);
		\draw(1-2)--(2-3);
\draw(1-2)--(2-4);
		\draw(2-2)--(3-1);
		\draw(2-2)--(3-2);
		\draw(3-1)--(4-1);
		\draw(3-2)--(4-2);
		\draw(3-2)--(4-3);
		\draw(4-1)--(5-1);
		\draw(4-1)--(5-2);
		\draw(4-2)--(5-3);
		\draw(4-2)--(5-4);
		\draw(5-1)--(6-1);
			\draw[dashed,red](-0.5,1.8) to [in=150, out=120] (0.1,2.4) ;
		\draw[dashed,red](-0.5,1.8)--(0.8,0.5);
		\draw[dashed, red](1.4,1.1)--(0.1,2.4);
		\draw[dashed,red] (1.4,1.1)to [in=-30, out=-60] (0.8,0.5);
	\end{tikzpicture}
\end{eqnarray*}

	\begin{enumerate}
		\item The tree monomial $T_1$ is effective and the divisor in the red/dashed circle is its effective divisor. Note that there is a vertex of positive degree on the path from the root of the tree to the root of the effective divisor.
		\item The tree monomial $T_2$ is not effective, because  there is a vertex of degree $1$ (the root itself) on the path from the root of the entire tree to the first leaf.
		\item For the tree monomial $T_3$, although there is a typical divisor $m_3\circ_1 m_2$ on the path from the root to the second leaf, there is a vertex  $d_2$ of positive degree on the path from the root of this divisor to the second leaf of $T_3$. Thus $T_3$ is not effective.
	\end{enumerate}
	\end{exam}

\begin{defn}
Let $T$ be an effective tree monomial in $\Difinfty$ and $T'$ be its effective divisor. Assume that $T'=\widehat{S}$, where $S$ is a generator of positive degree. Then define $$\overline{H}(T):=(-1)^\omega \frac{1}{c_S}m_{T', S}(T),$$ where $m_{T',S}(T)$ is the tree monomial obtained from $T$ by replacing the effective divisor $T'$ by $S$, $\omega$ is the sum of degrees of all the vertices on the path from the root of $T'$ to the root of $T$  (except the root vertex of $T'$) and on the left of this path.
\end{defn}

Now, we construct the homotopy map $H$ via the following  inductive  procedure:
\begin{enumerate}
	\item For a non-effective tree monomial $T$, define $H(T)=0$;
	\item For an effective tree monomial $T$, define $H(T)= \overline{H}(T)+H(\overline{T})$, where $\overline{T}$ is obtained from $T$ by replacing $\widehat{S}$ in $T$ by $\widehat{S}-\frac{1}{c_S}\partial S$. Then each tree monomials appearing in $\overline{T}$ is strictly smaller than $T$, and we define $H(\overline{T})$ by taking induction on the leading terms.
\end{enumerate}

We explain more on the definition of $H$. Denote $T$ also by $T_1$ and take $I_1:=\{1\}$ for convenience. Then by the definition above, $H(T)=\overline{H}(T_1)+H(\overline{T_1})$. Since $H$ vanishes on non-effective tree monomials, we have $H(\overline{T}_1)=H(\sum_{i_2\in I_2} T_{i_2})$ where $\{T_{i_2}\}_{i_2\in I_2}$ is the set of effective tree monomials together with their nonzero coefficients appearing in the expansion of $\overline{T_1}$. Then by the definition of $H$, we have $H\Big(\sum_{i_2\in I_2} T_{i_2}\Big)=\overline{H}\Big(\sum_{i_2\in I_2} T_{i_2}\Big)+H\Big(\sum_{i_2\in I_2}\overline{T_{i_2}}\Big),$
giving
$$H(T)=\overline{H}\Big(\sum_{i_1\in I_1} T_{i_1}\Big)+\overline{H}\Big(\sum_{i_2\in I_2} T_{i_2}\Big)+H\Big(\sum_{i_2\in I_2}\overline{T_{i_2}}\Big).$$ An induction on the leading terms shows that $H(T)$ is the following series:
\begin{eqnarray}
	\mlabel{eq:defhomo}
	H(T)=\sum_{k=1}^\infty
	\overline{H}\Big(\sum_{i_k\in I_k} T_{i_k}\Big),
	\end{eqnarray}
where $\{T_{i_k}\}_{i_k\in I_k}, k\geqslant 1,$ is the set of effective tree monomials  appearing in the expansion of $\sum_{i_{k-1}\in I_{k-1}} \overline{T_{i_{k-1}}}$ with nonzero coefficients.
	
\begin{lem}\mlabel{Lem: homotopy well defined}  For an effective tree monomial $T$, the expansion of $H(T)$ in Eq. \meqref{eq:defhomo} is always a finite sum, that is, there exists an integer $n$ such that none of the tree monomials in $\sum_{i_n\in I_n}\overline{T_{i_n}}$ is effective.
\end{lem}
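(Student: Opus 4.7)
The plan is to prove the lemma via the well-ordering on graded tree monomials introduced in this subsection. The central claim is that the maximum $M_k:=\max\{T_{i_k}\mid i_k\in I_k\}$ of the (finite) set of effective tree monomials at the $k$-th iteration strictly decreases in $k$, so no infinite descending chain is possible, and eventually $I_n=\emptyset$ for some $n$, at which point $\sum_{i_n\in I_n}\overline{T_{i_n}}=0$ vacuously contains no effective monomial.

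The key technical step is the monotonicity statement: every tree monomial appearing with nonzero coefficient in $\overline{T_{i_k}}$ is strictly less than $T_{i_k}$ in the order. By construction $\overline{T_{i_k}}$ is obtained by substituting the effective divisor $\widehat{S}$ inside $T_{i_k}$ by $\widehat{S}-\frac{1}{c_S}\partial S=-\frac{1}{c_S}(\partial S-c_S\widehat{S})$. Since $c_S\widehat{S}$ is the leading term of $\partial S$ by definition of $\widehat{S}$, the expression $\partial S-c_S\widehat{S}$ is a linear combination of tree monomials of the same arity and degree as $\widehat{S}$ but strictly smaller than $\widehat{S}$. Substituting such a smaller monomial for $\widehat{S}$ inside $T_{i_k}$ preserves arity and degree, and the comparison of path sequences reduces to comparing the subtrees themselves, since paths not passing through $\widehat{S}$ are unchanged while paths through $\widehat{S}$ share a common prefix above the root of $\widehat{S}$; hence the resulting tree monomial is strictly less than $T_{i_k}$. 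Consequently every effective $T_{i_{k+1}}\in I_{k+1}$ is strictly less than some $T_{i_k}$, so $M_{k+1}<M_k$ whenever $I_{k+1}\neq\emptyset$.

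To conclude, I would invoke that the order is a well-order: arity and degree range over $\NN$, and for each fixed arity $n$ and degree the path-lexicographic order is lexicographic on $n$-tuples of words over the (well-ordered) alphabet of generators, each component ordered by length-lexicographic order; a finite lexicographic product of well-orders is again a well-order. Hence any strictly descending chain $M_1>M_2>\cdots$ must be finite, yielding the desired $n$. The only delicate point, which I expect to handle by a careful unwinding of definitions rather than as a genuine obstacle, is the monotonicity of the path-lexicographic order under subtree substitution; this reduces to the elementary fact that prepending a common prefix preserves the order between the resulting suffixes in length-lexicographic order.
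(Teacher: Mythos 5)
Your proposal is correct and follows essentially the same route as the paper: the paper's proof simply observes that $\max\{T_{i_k}\mid i_k\in I_k\}>\max\{T_{i_{k+1}}\mid i_{k+1}\in I_{k+1}\}$ and concludes by the well-ordering of $>$. You have merely supplied the details (stability of arity and degree under replacing $\widehat S$ by the lower-order terms of $\tfrac{1}{c_S}\partial S$, and compatibility of the path-lexicographic comparison with substitution of a divisor) that the paper leaves as ``easy to see.''
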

\begin{proof}	 It is easily seen that $\max\{T_{i_k}|i_k\in I_k\}>\max\{T_{i_{k+1}}|i_{k+1}\in I_{k+1}\}$ for all   $k\geqslant 1$, where by convention, $i_1\in I_1=\{1\}$. Then Eq.~\meqref{eq:defhomo} is a finite sum since $>$ is a well order.
\end{proof}

\begin{lem}  \mlabel{Lem: Induction}
For an effective tree monomial $T\in \Difinfty$, we have
$$\partial \overline{H}(T)+H\partial(T-\overline{T})=T-\overline{T}.$$
\end{lem}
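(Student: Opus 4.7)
The plan is to expand $\partial \overline{H}(T)$ by the graded Leibniz rule in the free operad $\Difinfty$, isolate the contribution that reconstructs $T - \overline{T}$, and match the remaining residual terms against $H\partial(T - \overline{T})$. Write $T = T_\bullet(\widehat{S})$, where $T_\bullet$ denotes $T$ with a marked hole at the location of the effective divisor. By the definitions, $T - \overline{T} = c_S^{-1}\,T_\bullet(\partial S)$ and $\overline{H}(T) = (-1)^{\omega}\,c_S^{-1}\,T_\bullet(S)$. Applying $\partial$ to $\overline{H}(T)$, the contribution from differentiating the distinguished vertex $S$ acquires an extra sign $(-1)^\omega$ by commuting $\partial$ past the vertices preceding $S$ in the planar order; this is exactly the sign already built into $\overline{H}$, so the contribution equals $c_S^{-1}\,T_\bullet(\partial S) = T - \overline{T}$ as desired. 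The remaining contributions have the form $\pm\,c_S^{-1}(T_\bullet|_{v \mapsto \partial v})(S)$, one for each vertex $v$ of $T_\bullet$.

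For the other half of the identity, $\partial^2 S = 0$ forces $\partial(T - \overline{T}) = c_S^{-1}(\partial T_\bullet)(\partial S)$, with no derivative falling on $\partial S$. Splitting $\partial S = c_S\,\widehat{S} + R_S$, where $R_S$ is a combination of tree monomials strictly smaller than $\widehat{S}$ in the leading-term order, decomposes this expression into a principal piece $\sum_v \pm(T_\bullet|_{v \mapsto \partial v})(\widehat{S})$ and a tail piece $c_S^{-1}\sum_v \pm(T_\bullet|_{v \mapsto \partial v})(R_S)$. The key claim is that in each principal tree monomial $(T_\bullet|_{v \mapsto \partial v})(\widehat{S})$ the distinguished copy of $\widehat{S}$ remains the effective divisor; granting this, $\overline{H}$ acts by replacing $\widehat{S}$ with $S$ and the signs line up with those in the residual expansion above, so the principal part of $H\partial(T - \overline{T})$ cancels the residual terms of $\partial \overline{H}(T)$. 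The tail part is absorbed by the recursive definition of $H$ on strictly smaller tree monomials, which is well-founded by Lemma~\mref{Lem: homotopy well defined}.

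The main obstacle is the combinatorial verification that effectivity is preserved in the principal residuals. This reduces to a case analysis of the differential formulas \meqref{eq:diffgen1} and \meqref{eq:diffgen2}: each summand of $\partial m_n$ or $\partial d_n$ replaces the single vertex $v$ by a height-two tree whose new typical subdivisors, if any, appear only in positions that do not lie strictly upper-left of the preserved copy of $\widehat{S}$ in the planar order, by the hypothesis that $\widehat{S}$ was the effective divisor of $T$. Sign bookkeeping is then routine since the Koszul signs produced by the Leibniz rule are precisely the planar-order exponents used to define $\omega$ (and its vertex-indexed counterparts $\omega_v$). Assembling these ingredients yields the identity $\partial \overline{H}(T) + H\partial(T - \overline{T}) = T - \overline{T}$, and an induction on the leading-term order upgrades this to $\partial H + H\partial = \Id$ in positive degrees, completing the verification that $p\colon \Difinfty \to \Dif$ is a quasi-isomorphism.
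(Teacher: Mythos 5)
Your proposal is correct and follows essentially the same route as the paper's proof: expand $\partial\overline{H}(T)$ by the Leibniz rule so that the term where $\partial$ hits $S$ reproduces $T-\overline{T}$, use $\partial^2S=0$ and the splitting $\partial S=c_S\widehat{S}+(\partial S-c_S\widehat{S})$ in $H\partial(T-\overline{T})$, invoke the (asserted) preservation of effectivity with $\widehat{S}$ as effective divisor so that the recursion $H=\overline{H}+H\circ\overline{(\,\cdot\,)}$ collapses the principal terms to $\overline{H}$, and cancel these against the residual Leibniz terms. The only differences are notational (your marked-hole notation $T_\bullet$ versus the paper's explicit $X_k/Y_k$ factorization), and your effectivity claim is justified at the same level of detail as the paper's.
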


\begin{proof}
Express $T$ via partial compositions as follows:
	$$(\cdots(((((\cdots(X_1\circ_{i_1}X_2)\circ_{i_2}\cdots )\circ_{i_{p-1}}X_p)\circ_{i_p} \widehat{S})\circ_{j_1}Y_1)\circ_{j_2}Y_2)\circ_{j_3} \cdots)\circ_{j_q}Y_q,$$
	where $\widehat{S}$ is the effective divisor of $T$ and $X_1,\cdots, X_p$ are generators of $\Difinfty$ corresponding to the vertices that live on the path from the root of $T$ to the root of $\widehat{S}$ (except the root of $\widehat{S}$) and on the left of this path.
	
By definition,
{\small\begin{align*}
		 \partial \overline{H}(T)
=&\frac{1}{c_S}(-1)^{\sum\limits_{j=1}^p|X_j|}\partial \Big((\cdots(( ((\cdots(X_1\circ_{i_1}X_2)\circ_{i_2}\cdots )\circ_{i_{p-1}}X_p)\circ_{i_p} S)\circ_{j_1}Y_1)\circ_{j_2}  \cdots)\circ_{j_q}Y_q\Big)\\
 =&\frac{1}{c_S} \sum\limits_{k=1}^p(-1)^{\sum\limits_{j=1}^p|X_j|+\sum\limits_{j=1}^{k-1}|X_j|}\\
		& (\cdots ((((\cdots ((\cdots (X_1\circ_{i_1}X_2)\circ_{i_2}\cdots)\circ_{i_{k-1}}\partial X_k )\circ_{i_k}\cdots)\circ_{i_{p-1}}X_p)\circ_{i_p} S)\circ_{j_1}Y_1)\circ_{j_2} \cdots)\circ_{j_q}Y_q \\
&+\frac{1}{c_S}(\cdots((((\cdots(X_1\circ_{i_1}X_2)\circ_{i_2}\cdots )\circ_{i_{p-1}}X_p)\circ_{i_p} \partial S)\circ_{j_1}Y_1)\circ_{j_2} \cdots)\circ_{j_q}Y_q\\
		 &+ \frac{1}{c_S}\sum\limits_{k=1}^q (-1)^{|S|+\sum\limits_{j=1}^{k-1}|Y_j|}\\
		 &(\cdots((\cdots ((((\cdots(X_1\circ_{i_1}X_2)\circ_{i_2}\cdots )\circ_{i_{p-1}}X_p)\circ_{i_p} S)\circ_{j_1}Y_1)\circ_{j_2} \cdots \cdots)\circ_{j_k}\partial Y_{k})\circ_{j_{k+1}}\cdots )\circ_{j_q}Y_q
\end{align*}}
and
{\small
\begin{align*}
 &H\partial (T-\overline{T})\\
		 =&\frac{1}{c_S}H\partial \Big((\cdots((((\cdots(X_1\circ_{i_1}X_2)\circ_{i_2}\cdots )\circ_{i_{p-1}}X_p)\circ_{i_p} \partial S)\circ_{j_1}Y_1)\circ_{j_2} \cdots)\circ_{j_q}Y_q\Big)\\
 =&\frac{1}{c_S} \sum\limits_{k=1}^p(-1)^{\sum\limits_{j=1}^p|X_j|+\sum\limits_{j=1}^{k-1}|X_j|}\\
		& H\big((\cdots ((((\cdots ((\cdots (X_1\circ_{i_1}X_2)\circ_{i_2}\cdots)\circ_{i_{k-1}}\partial X_k )\circ_{i_k}\cdots)\circ_{i_{p-1}}X_p)\circ_{i_p} \partial S)\circ_{j_1}Y_1)\circ_{j_2} \cdots)\circ_{j_q}Y_q \big)\\
		 &+\frac{1}{c_S}\sum\limits_{k=1}^q(-1)^{\sum\limits_{j=1}^p|X_j|+|S|-1+\sum\limits_{j=1}^{k-1}|Y_j|}\\
		 &H\Big((\cdots((\cdots ((((\cdots(X_1\circ_{i_1}X_2)\circ_{i_2}\cdots )\circ_{i_{p-1}}X_p)\circ_{i_p} \partial S)\circ_{j_1}Y_1)\circ_{j_2} \cdots)\circ_{j_k}\partial Y_k)\circ_{j_{k+1}}\cdots)\circ_{j_q}Y_q\Big)\\
=&  \sum\limits_{k=1}^p(-1)^{\sum\limits_{j=1}^p|X_j|+\sum\limits_{j=1}^{k-1}|X_j|}\\
		& H\big((\cdots ((((\cdots ((\cdots (X_1\circ_{i_1}X_2)\circ_{i_2}\cdots)\circ_{i_{k-1}}\partial X_k )\circ_{i_k}\cdots)\circ_{i_{p-1}}X_p)\circ_{i_p} \widehat{S})\circ_{j_1}Y_1)\circ_{j_2} \cdots)\circ_{j_q}Y_q \big)\\
 &+  \sum\limits_{k=1}^p(-1)^{\sum\limits_{j=1}^p|X_j|+\sum\limits_{j=1}^{k-1}|X_j|}\\
		& H\big((\cdots ((((\cdots ((\cdots (X_1\circ_{i_1}X_2)\circ_{i_2}\cdots)\circ_{i_{k-1}}\partial X_k )\circ_{i_k}\cdots)\circ_{i_{p-1}}X_p)\circ_{i_p} (\frac{1}{c_S} \partial S-\widehat{S}))\circ_{j_1}Y_1)\circ_{j_2} \cdots)\circ_{j_q}Y_q \big)\\
 &+ \sum\limits_{k=1}^q(-1)^{\sum\limits_{j=1}^p|X_j|+|S|-1+\sum\limits_{j=1}^{k-1}|Y_j|}\\
		 &H\Big((\cdots((\cdots ((((\cdots(X_1\circ_{i_1}X_2)\circ_{i_2}\cdots )\circ_{i_{p-1}}X_p)\circ_{i_p} \widehat{S})\circ_{j_1}Y_1)\circ_{j_2} \cdots)\circ_{j_k}\partial Y_k)\circ_{j_{k+1}}\cdots)\circ_{j_q}Y_q\Big)\\
&+\sum\limits_{k=1}^q(-1)^{\sum\limits_{j=1}^p|X_j|+|S|-1+\sum\limits_{j=1}^{k-1}|Y_j|}\\
		 &H\Big((\cdots((\cdots ((((\cdots(X_1\circ_{i_1}X_2)\circ_{i_2}\cdots )\circ_{i_{p-1}}X_p)\circ_{i_p}(\frac{1}{c_S} \partial S-\widehat{S}))\circ_{j_1}Y_1)\circ_{j_2} \cdots)\circ_{j_k}\partial Y_k)\circ_{j_{k+1}}\cdots)\circ_{j_q}Y_q\Big).
	\end{align*}
}
By the definition of effective divisors in an effective tree monomial, it can be easily seen that each tree monomial in the expansion   of 	
	 $$T_k':=(\cdots ((((\cdots ((\cdots (X_1\circ_{i_1}X_2)\circ_{i_2}\cdots)\circ_{i_{k-1}}\partial X_k )\circ_{i_k}\cdots)\circ_{i_{p-1}}X_p)\circ_{i_p} \widehat{S})\circ_{j_1}Y_1)\circ_{j_2} \cdots)\circ_{j_q}Y_q$$
 and 	$$T_k'':=(\cdots((\cdots ((((\cdots(X_1\circ_{i_1}X_2)\circ_{i_2}\cdots )\circ_{i_{p-1}}X_p)\circ_{i_p} \widehat{S})\circ_{j_1}Y_1)\circ_{j_2} \cdots)\circ_{j_k}\partial Y_k)\circ_{j_{k+1}}\cdots)\circ_{j_q}Y_q$$
	is  still an effective tree monomial with $\widehat{S}$ as the effective divisor.
We thus have
{\small\begin{align*}
		 &H\partial (T-\overline{T})\\
=& \frac{1}{c_S} \sum\limits_{k=1}^p(-1)^{\sum\limits_{j=1}^p|X_j|+\sum\limits_{j=1}^{k-1}|X_j|}  (H(T_k')- H(\overline{T_k'}))+ \sum\limits_{k=1}^q(-1)^{\sum\limits_{j=1}^p|X_j|+|S|-1+\sum\limits_{j=1}^{k-1}|Y_j|} (H(T_k'')- H(\overline{T_k''}))\\
 =&\frac{1}{c_S} \sum\limits_{k=1}^p(-1)^{\sum\limits_{j=1}^p|X_j|+\sum\limits_{j=1}^{k-1}|X_j|}\overline{H}(T_k') + \sum\limits_{k=1}^q(-1)^{\sum\limits_{j=1}^p|X_j|+|S|-1+\sum\limits_{j=1}^{k-1}|Y_j|}   \overline{H}(T_k'') \\
  =&\frac{1}{c_S} \sum\limits_{k=1}^p(-1)^{\sum\limits_{j=1}^p|X_j|+\sum\limits_{j=1}^{k-1}|X_j|}\\
		&  (\cdots ((((\cdots ((\cdots (X_1\circ_{i_1}X_2)\circ_{i_2}\cdots)\circ_{i_{k-1}}\partial X_k )\circ_{i_k}\cdots)\circ_{i_{p-1}}X_p)\circ_{i_p}   S)\circ_{j_1}Y_1)\circ_{j_2} \cdots)\circ_{j_q}Y_q  \\
		  &+\frac{1}{c_S} \sum\limits_{k=1}^q(-1)^{|S|-1+\sum\limits_{j=1}^{k-1}|Y_j|}\\
		 &(\cdots((\cdots ((((\cdots(X_1\circ_{i_1}X_2)\circ_{i_2}\cdots )\circ_{i_{p-1}}X_p)\circ_{i_p} S)\circ_{j_1}Y_1)\circ_{j_2} \cdots \cdots)\circ_{j_k}\partial Y_{k})\circ_{j_{k+1}}\cdots )\circ_{j_q}Y_q.
\end{align*}}
Therefore, adding the expansions of $\partial \overline{H}(T)$ and $H\partial(T-\overline{T})$, we obtain
$$\hspace{4.5cm} \partial \overline{H}(T)+H\partial(T-\overline{T})=T-\overline{T}.
	\hspace{4.5cm} \qedhere$$
\end{proof}

\begin{prop}\mlabel{Prop: homotopy}
The degree $1$ map $H$ defined in Eq.~\meqref{eq:defhomo} satisfies $\partial H+H\partial=\mathrm{id}$ on  $\Difinfty$  in  each positive degree.
\end{prop}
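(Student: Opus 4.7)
The proof proceeds by strong induction on the well-order $>$ on positive-degree tree monomials of $\Difinfty$, with the inductive claim $(\partial H + H\partial)(T) = T$. When $T$ is effective, Lemma~\mref{Lem: Induction} supplies $\partial\overline{H}(T) + H\partial(T - \overline{T}) = T - \overline{T}$. By Lemma~\mref{Lem: homotopy well defined} each tree monomial appearing in $\overline{T}$ is strictly smaller than $T$ in the order $>$, so the inductive hypothesis yields $\partial H\overline{T} + H\partial\overline{T} = \overline{T}$; summing these two identities and invoking the defining relation $HT = \overline{H}(T) + H\overline{T}$ delivers $(\partial H + H\partial)(T) = T$.

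When $T$ is non-effective and of positive degree, one has $HT = 0$, so it suffices to prove $H\partial T = T$. The plan is to single out a distinguished positive-degree vertex $v_0$ of $T$ (morally, the leftmost-uppermost positive-degree vertex of $T$) such that the tree monomial $T_1$ obtained from $T$ by replacing $v_0$ with $\widehat{v_0}$ is effective with effective divisor $\widehat{v_0}$ placed at the site of $v_0$. The identity asserting that the substitution of $\partial v_0$ in place of $v_0$ inside $T$ equals $c_{v_0}(T_1 - \overline{T_1})$ then splits the Leibniz expansion of $\partial T$ as $\partial T = \epsilon_0 c_{v_0}(T_1 - \overline{T_1}) + R$, where $\epsilon_0$ is the Leibniz sign at $v_0$ and $R$ gathers the contributions from every other positive-degree vertex of $T$. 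The recursive definition of $H$ gives $H(T_1 - \overline{T_1}) = \overline{H}(T_1) = (-1)^{\omega_0} c_{v_0}^{-1} T$, and hence $H\partial T = \epsilon_0(-1)^{\omega_0}\,T + HR$.

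The proof is completed by two residual claims. The first, the sign identity $\epsilon_0(-1)^{\omega_0} = 1$, is a direct bookkeeping check between the Leibniz sign at $v_0$ and the exponent $\omega_0$ entering the definition of $\overline{H}$, both being determined by the vertices of $T$ positioned relative to $v_0$. The second claim, $HR = 0$, is the main combinatorial obstacle: every tree monomial in the Leibniz contribution from a vertex $v \ne v_0$ must be verified non-effective. This is handled by a case analysis hinging on the choice of $v_0$, in which $v_0$ remains in the substituted tree in a position that obstructs either condition~(ii) of the effectiveness definition (when $v_0$ separates the new typical divisor from the leftmost leaf above it) or condition~(iii) (when $v_0$ lies on a root-to-leaf path to a leaf strictly to the left of the candidate effective leaf). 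Executing these two verifications closes the induction and completes the proof.
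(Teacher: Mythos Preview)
Your proposal is correct and follows essentially the same approach as the paper's proof. The paper also splits into the effective and non-effective cases, uses Lemma~\mref{Lem: Induction} together with induction on the well-order for the effective case, and for the non-effective case chooses the same distinguished positive-degree vertex (your $v_0$, the paper's $S$), then argues that only the Leibniz summand at $S$ produces effective tree monomials and that the resulting $\overline{H}$-term recovers $T$.

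The only cosmetic differences are: (1) the paper treats the non-effective case first and the effective case second, whereas you reverse the order; (2) you explicitly isolate the sign identity $\epsilon_0(-1)^{\omega_0}=1$ and the vanishing $HR=0$ as separate claims, while the paper absorbs the sign identity silently (it is trivially $1\cdot 1$ since all $X_i$ have degree zero by the choice of $S$) and phrases your $HR=0$ as ``the effective tree monomials in $\partial T$ will only appear in the expansion of'' the $\partial S$-summand, with roughly the same level of detail on the supporting case analysis that you sketch.
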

\begin{proof}
We first prove that for a non-effective tree monomial $T$, the equation $\partial H(T)+H\partial(T)=T$  holds. By the definition of $H$, since $T$ is not effective, $H(T)=0$. Thus we just need to check $H\partial(T)=T$. Since $T$ has positive degree, there exists at least one vertex of positive degree. We pick such a vertex $S$ that satisfies the following additional conditions:
	\begin{enumerate}
		\item On the path from $S$ to the leftmost leaf $l$ of $T$ above $S$, there are no other vertices of positive degree;
		\item  For a leaf $l'$ of $T$ located on the left of $l$, the vertices  on the path from the root of $T$ to $l'$ are all of degree 0.
	\end{enumerate}
Such a vertex always exists.
	Then the element in $\Difinfty$ corresponding to $T$ can be written as
	$$(\cdots((((\cdots(X_1\circ_{i_1}X_2)\circ\cdots )\circ_{i_{p-1}}X_p)\circ_{i_p} S)\circ_{j_1}Y_1)\circ_{j_2}Y_2\cdots)\circ_{j_q}Y_q ,$$
	where $X_1,\cdots,X_p$ correspond to the vertices along the path from the root of $T$ to $S$ and the vertices on the left of this path.
Thus, all $X_i, i=1, \dots, p$ are of degree zero.

Then by definition,
	{\small\begin{align*}
		 H\partial T=&H\ \Big( (-1)^{|X_1|+\cdots+|X_p|}(\cdots((((\cdots(X_1\circ_{i_1}X_2)\circ_{i_2}\cdots )\circ_{i_{p-1}}X_p)\circ_{i_p} \partial S)\circ_{j_1}Y_1)\circ_{j_2} \cdots)\circ_{j_q}Y_q\Big)\\
		&+\sum\limits_{k=1}^q(-1)^{\sum\limits_{t=1}^p|X_t|+|S|+\sum\limits_{t=1}^{k-1}|Y_t|} \\
&\quad H\ \Big((\cdots((\cdots((((\cdots(X_1\circ_{i_1}X_2)\circ\cdots )\circ_{i_{p-1}}X_p)\circ_{i_p} S)\circ_{j_1}Y_1)\circ_{j_2} \cdots)\circ_{j_k}\partial Y_{k})\circ_{j_{k+1}}\cdots\circ_{j_q}Y_q\Big).
	\end{align*}}
	By the assumption, the divisor consisting of the path from $S$ to $l$ must be of the following forms
	\begin{eqnarray*}
		\begin{tikzpicture}[scale=0.4,descr/.style={fill=white}]
			\tikzstyle{every node}=[thick,minimum size=4pt, inner sep=1pt]
			\node(r) at(0,-0.5)[minimum size=0pt, label=below:$(A)$]{};
			\node(0) at(0,0)[circle, fill=black, label=right:$m_n\quad (n\geqslant 3)$]{};
			\node(1-1) at(-2,2)[circle, draw]{};
			\node(1-2) at(0,2){};
			\node(1-3) at (2,2){};
			\node(2-1) at(-3,3)[circle,draw]{};
			\node(3-1) at(-4,4){};
			\draw(0)--(1-1);
			\draw(0)--(1-2);
			\draw(0)--(1-3);
			\draw[dotted, line width=1pt](-0.8,1)--(0.8,1);
			\draw[dotted, line width=1pt](1-1)--(2-1);
			\draw(2-1)--(3-1);
			\path[-,font=\scriptsize]
			(-1.8,1.2) edge [bend left=80] node[descr]{{\tiny$\sharp\geqslant 0$}} (-3.8,3.2);	
		\end{tikzpicture}
		\hspace{5mm}
		\begin{tikzpicture}[scale=0.4,descr/.style={fill=white}]
			\tikzstyle{every node}=[thick,minimum size=4pt, inner sep=1pt]
			\node(r) at(0,-0.5)[minimum size=0pt, label=below:$(B)$]{};
			\node(0) at(0,0)[circle, draw, label=right:$d_n\quad (n\geqslant 2)$]{};
			\node(1-1) at(-2,2)[circle, draw]{};
			\node(1-2) at(0,2){};
			\node(1-3) at (2,2){};
			\node(2-1) at(-3,3)[circle,draw]{};
			\node(3-1) at(-4,4){};
			\draw(0)--(1-1);
			\draw(0)--(1-2);
			\draw(0)--(1-3);
			\draw[dotted, line width=1pt](-0.8,1)--(0.8,1);
			\draw[dotted, line width=1pt](1-1)--(2-1);
			\draw(2-1)--(3-1);
			\path[-,font=\scriptsize]
			(-1.8,1.2) edge [bend left=80] node[descr]{{\tiny$\sharp\geqslant 0$}} (-3.8,3.2);
		\end{tikzpicture}
	\end{eqnarray*}
	By the assumption that $T$ is not effective and the additional properties of the position of $S$ stated above, the effective tree monomials in $\partial T$ can only appear in the expansion of
	$$(-1)^{|X_1|+\cdots+|X_p|}(\cdots((((\cdots(X_1\circ_{i_1}X_2)\circ\cdots )\circ_{i_{p-1}}X_p)\circ_{i_p} \partial S)\circ_{j_1}Y_1)\circ_{j_2}Y_2\cdots)\circ_{j_q}Y_q.$$
	
	Consider the tree monomial $(\cdots((((\cdots(X_1\circ_{i_1}X_2)\circ\cdots )\circ_{i_{p-1}}X_p)\circ_{i_p} \widehat{ S})\circ_{j_1}Y_1)\circ_{j_2}Y_2\cdots)\circ_{j_q}Y_q$ in $\partial T$. The path connecting the root of $\widehat{S}$ and $l$ must be one of the following forms:
	\begin{eqnarray*}
		\begin{tikzpicture}[scale=0.4,descr/.style={fill=white}]
			\tikzstyle{every node}=[thick,minimum size=4pt, inner sep=1pt]
			\node(r) at(0,-0.5)[minimum size=0pt,label=below:$(A)$]{};
			\node(1) at (0,0)[circle,draw,fill=black,label=right:$\ m_{n-1}\quad (n\geqslant 3)$]{};
			\node(2-1) at(-1,1)[circle,draw,fill=black]{};
			\node(2-2) at (1,1) {};
			\node(3-2) at(0,2){};
			\node(3-1) at (-2,2)[circle,draw]{};
			\node(4-1) at (-3,3)[circle,draw]{};
			\node(5-1) at (-4,4){};
			\draw(1)--(2-1);
			\draw(1)--(2-2);
			\draw(2-1)--(3-1);
			\draw(2-1)--(3-2);
			\draw [dotted,line width=1pt](-0.4,0.5)--(0.4,0.5);
			\draw(2-1)--(3-1);
			\draw[dotted,line width=1pt] (3-1)--(4-1);
			\draw(4-1)--(5-1);
			\path[-,font=\scriptsize]
			(-1.8,1.2) edge [bend left=80] node[descr]{{\tiny$\sharp\geqslant 0$}} (-3.8,3.2);
		\end{tikzpicture}
		\hspace{10mm}
		\begin{tikzpicture}[scale=0.4,descr/.style={fill=white}]
			\tikzstyle{every node}=[thick,minimum size=4pt, inner sep=1pt]
			\node(r) at(0,-0.5)[minimum size=0pt,label=below:$(B)$]{};
			\node(1) at (0,0)[circle,draw,label=right:$\ d_{n-1}\quad (n\geqslant 2)$]{};
			\node(2-1) at(-1,1)[circle,draw,fill=black]{};
			\node(2-2) at (1,1) {};
			\node(3-1) at(-2,2)[circle,draw]{};
			\node(3-2) at(0,2){};
			\node(4-1) at(-3,3)[circle,draw]{};
			\node(5-1) at (-4,4){};
			\draw(1)--(2-1);
			\draw(1)--(2-2);
			\draw(2-1)--(3-1);
			\draw(2-1)--(3-2);
			\draw[dotted,line width=1pt](3-1)--(4-1);
			\draw(4-1)--(5-1);
			\draw [dotted,line width=1pt](-0.4,0.5)--(0.4,0.5);
			\path[-,font=\scriptsize]
			(-1.8,1.2) edge [bend left=80] node[descr]{{\tiny$\sharp\geqslant 0$}} (-3.8,3.2);
		\end{tikzpicture}
	\end{eqnarray*}
By the assumption that $T$ is not effective and the choice of $S$, there exists no effective divisor on the left of the path from the root of $T$ to $l$. So the tree monomial $$(\cdots((((\cdots(X_1\circ_{i_1}X_2)\circ\cdots )\circ_{i_{p-1}}X_p)\circ_{i_p} \widehat{ S})\circ_{j_1}Y_1)\circ_{j_2}Y_2\cdots)\circ_{j_q}Y_q$$
is effective and its effective divisor is simply $\widehat{S}$ itself.
Then we have
{\small
\begin{eqnarray*}&&H\partial T\\
	&=&H\Big((-1)^{|X_1|+\cdots+|X_p|}(\cdots((((\cdots(X_1\circ_{i_1}X_2)\circ_{i_2}\cdots )\circ_{i_{p-1}}X_p)\circ_{i_p} \partial S)\circ_{j_1}Y_1)\circ_{j_2} \cdots)\circ_{j_q}Y_q\Big)\\
	&=&c_SH\Big((-1)^{|X_1|+\cdots+|X_p|}(\cdots((((\cdots(X_1\circ_{i_1}X_2)\circ\cdots )\circ_{i_{p-1}}X_p)\circ_{i_p} \widehat{ S})\circ_{j_1}Y_1)\circ_{j_2}Y_2\cdots)\circ_{j_q}Y_q\Big)\\
	&&+H\Big((-1)^{|X_1|+\cdots+|X_p|}(\cdots((((\cdots(X_1\circ_{i_1}X_2)\circ_{i_2}\cdots )\circ_{i_{p-1}}X_p)\circ_{i_p} (\partial S-c_S\widehat{S}))\circ_{j_1}Y_1)\circ_{j_2} \cdots)\circ_{j_q}Y_q\Big)\\
	&=& c_S\overline{H}\Big((-1)^{|X_1|+\cdots+|X_p|}(\cdots((((\cdots(X_1\circ_{i_1}X_2)\circ_{i_2}\cdots )\circ_{i_{p-1}}X_p)\circ_{i_p} \widehat{ S})\circ_{j_1}Y_1)\circ_{j_2} \cdots)\circ_{j_q}Y_q\Big)\\
	&&+c_SH\Big((-1)^{|X_1|+\cdots+|X_p|}(\cdots((((\cdots(X_1\circ_{i_1}X_2)\circ_{i_2}\cdots )\circ_{i_{p-1}}X_p)\circ_{i_p}  (\widehat{S}-\frac{1}{c_S}\partial S))\circ_{j_1}Y_1)\circ_{j_2}Y_2\cdots)\circ_{j_q}Y_q\Big)\\
	&&+H\Big((-1)^{|X_1|+\cdots+|X_p|}(\cdots((((\cdots(X_1\circ_{i_1}X_2)\circ_{i_2}\cdots )\circ_{i_{p-1}}X_p)\circ_{i_p} (\partial S-c_S\widehat{S}))\circ_{j_1}Y_1)\circ_{j_2}Y_2\cdots)\circ_{j_q}Y_q\Big)\\
	&=& c_S\overline{H}\Big((-1)^{|X_1|+\cdots+|X_p|}(\cdots((((\cdots(X_1\circ_{i_1}X_2)\circ_{i_2}\cdots )\circ_{i_{p-1}}X_p)\circ_{i_p} \widehat{ S})\circ_{j_1}Y_1)\circ_{j_2}Y_2\cdots)\circ_{j_q}Y_q\Big)\\
	&=&(\cdots((((\cdots(X_1\circ_{i_1}X_2)\circ_{i_2}\cdots )\circ_{i_{p-1}}X_p)\circ_{i_p} S)\circ_{j_1}Y_1)\circ_{j_2}Y_2\cdots)\circ_{j_q}Y_q\\
		&=& T .
	\end{eqnarray*}
}	

Let $T$  be an effective tree monomial.
	By Lemma~\mref{Lem: Induction}, we have $\partial \overline{H}(T)+H\partial(T-\overline{T})=T-\overline{T}$. Moreover, since each tree monomial of  $\overline{T}$ is strictly smaller than $T$, by induction, we obtain $H\partial(\overline{T})+\partial H(\overline{T})=\overline{T}$.
	By  $H(T)=\overline{H}(T)+H(\overline{T})$, we get
  $$\hspace{1cm} \partial H(T)+H\partial (T)=\partial \overline{H}(T)+\partial H(\overline{T})+H\partial(T-\overline{T})+H\partial(\overline{T})=T-\overline{T}+\overline{T}=T.
 \hspace{1cm} \qedhere $$
\end{proof}

Combining Lemma~\mref{Lem: H0 of Difinfty is Dif} and Proposition~\mref{Prop: homotopy}, we obtain a surjective  quasi-isomorphism $p: \Difinfty\to\Dif$.
Notice that conditions (i) and (ii) of Definition~\mref{de:model} hold by the construction of the dg operad $\Difinfty$. Thus the proof of Theorem~\mref{thm:difmodel} is completed.

\subsection{Homotopy differential algebras with weight}\
\mlabel{ss:homo}

\begin{defn}\mlabel{de:homodifalg} Let $V=(V, m_1)$ be a complex.  A \name{homotopy differential algebra structure of weight $\lambda$} on $V$ is a homomorphism $\Difinfty\to \mathrm{End}_V$ of unital dg operads. \end{defn}
Definition~\mref{de:homodifalg} translates to the following explicit characterization of homotopy differential algebras.
\begin{prop}\mlabel{de:homodifalg2}  A homotopy differential algebra structure of weight $\lambda$ on a    graded space $V$ is equivalently given by  two family of operators $\{ m_n: V^{\ot n}\to V\}_{n\geqslant 1}$ and $\{d_n: V^{\ot n}\to V\}_{n\geqslant 1}$ with $|m_n|=n-2$ and $|d_n|=n-1$,  fulfilling the following identities: for each $n\geqslant 1$,
	\begin{eqnarray}\mlabel{eq:homodifalg2}
		\sum_{i+j+k=n\atop i, k\geqslant 0, j\geqslant 1}(-1)^{i+jk}m_{i+1+k}\circ(\Id^{\ot i}\ot m_j\ot \Id^{\ot k})=0,
	\end{eqnarray}

\begin{eqnarray}\mlabel{eq:homodifop2}
	 &&\sum_{i+j+k=n\atop i, k\geqslant 0, j\geqslant 1}(-1)^{i+jk}d_{i+1+k}\circ (\Id^{\ot i}\ot m_j\ot \Id^{\ot k})\\
	\notag  &&\quad \quad\quad\quad\quad =\sum_{\substack{l_1+\dots+l_q+j_1+\dots+j_{q+1}=n \\ j_1+\dots+j_{q+1}+q=p\\  j_1, \dots, j_{q+1}\geqslant 0, l_1, \dots, l_{q}\geqslant 1 \\n\geqslant p\geqslant q\geqslant 1}}(-1)^{\eta}\lambda^{q-1}m_p\circ(\Id^{\ot j_1}\ot d_{l_1}\ot \Id^{\ot j_2}\ot \cdots\ot d_{l_q}\ot \Id^{\ot j_{q+1}}),
	\end{eqnarray}
	where
	{\small
$$\eta:=\frac{n(n-1)}{2}+\frac{p(p-1)}{2}+\sum\limits_{k=1}^q\frac{l_k(l_k-1)}{2}+\sum\limits_{k=1}^q\big(l_k-1\big)\big(\sum\limits_{r=1}^kj_r+\sum\limits_{r=1}^{k-1}l_r\big)
=\sum_{k=1}^q(l_k-1)(q-k+\sum_{r=k+1}^{q+1}j_r).
$$}
\end{prop}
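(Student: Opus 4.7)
The plan is to unpack Definition~\ref{de:homodifalg} using the quasi-free description $\Difinfty = (\mathcal{F}(\mathcal{O}), \partial)$ given in Proposition~\ref{def: expanded def of homotopy differential algebras}. Because the underlying graded operad is free on the collection $\mathcal{O}$, any morphism of unital graded operads $\phi \colon \Difinfty \to \mathrm{End}_V$ is uniquely determined by its restriction to the generators. To give $\phi$ is therefore to choose $\phi(m_n) \in \mathrm{End}_V(n)$ of degree $n-2$ for each $n \geqslant 2$ and $\phi(d_n) \in \mathrm{End}_V(n)$ of degree $n-1$ for each $n \geqslant 1$. Together with the differential $m_1$ of the complex $V$, viewed as an element of $\mathrm{End}_V(1)$ of degree $-1$, these assemble into the two families $\{m_n\}_{n \geqslant 1}$, $\{d_n\}_{n \geqslant 1}$ of the proposition, and conversely every such pair of families arises in this way.

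The morphism $\phi$ of graded operads is a morphism of dg operads precisely when $\phi \circ \partial_{\Difinfty}(S) = \partial_{\mathrm{End}_V}(\phi(S))$ for each generator $S \in \mathcal{O}$. The differential on $\mathrm{End}_V$ is induced by the operadic action of $m_1$: for $f \in \mathrm{End}_V(n)$,
$$\partial_{\mathrm{End}_V}(f) = m_1 \circ_1 f - (-1)^{|f|} \sum_{i=1}^{n} f \circ_i m_1.$$
Imposing this condition on the generator $m_n$ and substituting \eqref{eq:diffgen1} produces an identity in which the $m_1$ terms generated by $\partial_{\mathrm{End}_V}$ supply exactly the boundary cases $j = 1$ and $j = n$ absent from the sum in \eqref{eq:diffgen1}. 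Reindexing by $i+j+k = n$ then yields the $A_\infty$-relation \eqref{eq:homodifalg2}. Imposing the condition on $d_n$ and substituting \eqref{eq:diffgen2} proceeds in two parts: the first summand of \eqref{eq:diffgen2}, combined with the $m_1$ contributions from $\partial_{\mathrm{End}_V}(\phi(d_n))$, furnishes the entire left-hand side of \eqref{eq:homodifop2}; the second summand, once the nested partial compositions $\bigl((m_p \circ_{k_1} d_{l_1}) \circ_{k_2 + l_1 - 1} d_{l_2}\bigr) \cdots$ at positions $1 \leqslant k_1 < \cdots < k_q \leqslant p$ are rewritten as iterated operator compositions $m_p \circ (\id^{\otimes j_1} \otimes d_{l_1} \otimes \cdots \otimes d_{l_q} \otimes \id^{\otimes j_{q+1}})$ with $j_s = k_s - k_{s-1} - 1$ (using the conventions $k_0 = 0$ and $k_{q+1} = p+1$), supplies the right-hand side.

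The main technical task is the verification that the signs match. Three sources of signs have to be tracked: the explicit exponents $(-1)^{i+j(n-i)}$ in \eqref{eq:diffgen1} and $(-1)^{\xi}$ in \eqref{eq:diffgen2}, coming from the differential of $\Difinfty$; the Koszul signs that appear when the operadic partial composition $f \circ_i g$ in $\mathrm{End}_V$ is compared to the operator composition written with $\id$'s, which depend on $|m_j|=j-2$, $|d_j|=j-1$, and the insertion positions; and the sign $(-1)^{|f|}$ in the formula for $\partial_{\mathrm{End}_V}$. Systematic bookkeeping organised by the index pattern of each summand --- namely $(i,j,k)$ on the $m_n$ side and $(p,q, j_1, \dots, j_{q+1}, l_1, \dots, l_q)$ on the $d_n$ side --- produces the sign $(-1)^{i+jk}$ of \eqref{eq:homodifalg2} and the exponent $\eta = \sum_{k=1}^{q}(l_k-1)\bigl(q-k+\sum_{r=k+1}^{q+1} j_r\bigr)$ of \eqref{eq:homodifop2}. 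With the signs reconciled, the equivalence between the morphism and the two families of identities is established.
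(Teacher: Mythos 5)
Your proposal is correct and follows essentially the same route the paper takes (the paper presents this proposition as a direct translation of Definition~\ref{de:homodifalg}, without writing out the computation): a morphism from the quasi-free dg operad $\Difinfty$ to $\End_V$ is determined by its values on generators, and imposing compatibility with the differentials generator by generator, then sorting terms according to whether an $m$ or a $d$ is applied last, yields exactly \eqref{eq:homodifalg2} and \eqref{eq:homodifop2}. One small bookkeeping slip: of the two $m_1$-contributions from $\partial_{\mathrm{End}_V}(\phi(d_n))$, only the terms $d_n\circ_i m_1$ land on the left-hand side of \eqref{eq:homodifop2} (as the $j=1$ cases), while $m_1\circ_1 d_n$ is the $p=q=1$ term of the right-hand side, which cannot come from the second summand of \eqref{eq:diffgen2} since there $p\geqslant 2$.
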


Notice that  Eq.~\meqref{eq:homodifalg2} is exactly the Stasheff identity  defining $A_\infty$-algebras \mcite{Sta63}.
As is well known, the homology $\rmH_*(V, m_1)$ endowed with the associative product induced by   $m_2$   is a graded  algebra.

Expanding  Eq.~\meqref{eq:homodifop2} for small $n$'s, one obtains
\begin{enumerate}
	\item when $n=1$, $|d_1|=0$ and $d_1\circ m_1=m_1\circ d_1,$
that is,   $d_1: (V, m_1)\to (V, m_1)$ is a chain map;
	\item when $n=2$,  $|d_2|=1$ and
	\begin{eqnarray*}  d_1\circ m_2-\Big(m_2\circ(d_1\ot \Id)+m_2\circ(\Id\ot d_1)+\lambda m_2\circ(d_1\ot d_1)\Big) \\
= d_2\circ(\Id\ot m_1 +m_1\ot \Id)+
	m_1\circ d_2,	\end{eqnarray*}
which shows that $d_1$ is, up to a homotopy given by $d_2$,  a differential operator of weight $\lambda$ with respect to the ``multiplication" $m_2$.
\end{enumerate}
As a consequence,   the homology $\rmH_*(V, m_1)$, endowed with the associative product induced by $m_2$ and the differential operator induced by  $d_1$, is a differential algebra of weight $\lambda$.
This  indicates that  the notion of  homotopy differential algebras is a  ``higher homotopy" version of  that of differential algebras with weight.

\section{From the  minimal model to the  $L_\infty$-algebra controlling deformations}
  \mlabel{ss:modelinf}

To finish the paper, we will use the homotopy cooperad $ \Dif^{\ac}$ to deduce the  $L_\infty$-structure controlling the deformations of    differential algebras with weight, as promised in Theorem~\mref{th:linfdiff}. As a consequence, homotopy differential algebras can be described as the Maurer-Catan elements in (a reduced version) of this $L_\infty$-algebra.

\subsection{Proof of Theorem~\mref{th:linfdiff}} \mlabel{ss: Linifty}\

By Proposition~\mref{pp:linfhomood} and Proposition-Definition~\mref{Prop: convolution homotopy operad}, for a graded space $V$, the $L_\infty$-algebra from the deformation complex of differential algebras is  $$\mathfrak{C}_{\Dif}(V):=\mathbf{Hom}( \Dif^{\ac}, \End_V)^{\prod}.$$

Now, we determine the $L_{\infty}$-algebra $\frakC_{\Dif}(V)$ explicitly and thus give the promised operadic proof of   Theorem~\mref{th:linfdiff}.

The sign rules in $ \Dif^{\ac}$ are complicated, so we use some transformations. Notice that there is a natural isomorphism of operads: $\mathbf{Hom}(\cals,\End_{sV}) \cong \End_V$. Then we have the following isomorphisms of homotopy operads:
$$\begin{array}{rcl}
\mathbf{Hom}( \Dif^{\ac}, \End_V)& \cong &  \mathbf{Hom}( \Dif^{\ac}, \mathbf{Hom}(\cals,\End_{sV}))  \cong   \mathbf{Hom}( \Dif^{\ac}\otimes_{\mathrm{H}}\cals, \End_{sV})\\
 & =  & \mathbf{Hom}(\mathscr{S} (\Dif^{\ac}), \End_{sV}).
\end{array}$$
Recall that for $n\geqslant 1$, $\mathscr{S} (\Dif^{\ac})(n)=\bfk \widetilde{m_n}\oplus \bfk \widetilde{d_n}$ with $|\widetilde{m_n}|=0$ and $|\widetilde{d_n}|=1$. By definition, we have
$$\begin{array}{rcl}
\mathbf{Hom}(\mathscr{S} (\Dif^{\ac}), \End_{sV})(n) & = & \Hom(\bfk \widetilde{m_n}\oplus \bfk \widetilde{d_n}, \Hom((sV)^{\otimes n}, sV))\\
& \cong & \Hom(\bfk \widetilde{m_n}, \Hom((sV)^{\otimes n}, sV))\oplus \Hom( \bfk \widetilde{d_n}, \Hom((sV)^{\otimes n}, sV)).
\end{array}$$
For a homogeneous element $f\in \Hom((sV)^{\otimes n}, sV)$, define
$$\widehat{f}:\bfk \widetilde{m_n} \to \Hom((sV)^{\otimes n}, sV)), \quad \widetilde{m_n} \mapsto f, $$
 and for a homogeneous element $g\in \Hom((sV)^{\otimes n}, V)$, define
 $$\overline{g}: \bfk \widetilde{d_n} \to \Hom((sV)^{\otimes n}, sV)), \quad  \widetilde{d_n} \mapsto (-1)^{|g|} sg.$$
The resulting bijections,  sending $f$ and $g$  to $\widehat{f}$ and $\overline{g}$ respectively,  allow us to identify $\frakC_{\DA}(V)$  with   $\mathbf{Hom}(\mathscr{S} (\Dif^{\ac}), \End_{sV})$.

Now we   compute the $L_{\infty}$-structure $\{ \ell_n \}_{n\geqslant 1}$ of $\frakC_{\Dif}(V)$.
Recall that for homogeneous elements $x_i\in \frakC_{\Dif}(V),1\leqslant i\leqslant n$,
 $$\ell_n(x_1,\cdots,x_n)=\sum_{\sigma \in \rmS_n} \chi(\sigma; x_1,\dots,x_n) m_n(x_{\sigma(1)}\otimes \cdots \otimes x_{\sigma(n)})$$
 with

 $$m_n(x_1\otimes \cdots \otimes x_n)=\sum_{T\in \frakt, w(T)=n} m_T(x_1\otimes \cdots \otimes x_n),$$
 where $\{m_T\}_{T\in \frakt}$ is given by the homotopy operad structure of $\mathbf{Hom}(\mathscr{S} (\Dif^{\ac}), \End_{sV})$.
One computes the maps $m_n$ as follows.
\begin{itemize}
\item[(i)] For homogeneous elements $f,g\in\End_{sV}$,
$m_2(\widehat{f}\otimes \widehat{g})=\widehat{f\{ g \}};$

\item[(ii)] For homogeneous elements $f,g\in\End_{sV}$,
$m_2(\overline{f}\otimes \widehat{g})=(-1)^{|\widehat{g}|} \overline{f\{ g \}};$

\item[(iii)] For homogeneous elements $f_0,f_1,\dots,f_n\in \End_{sV},n\geqslant 1$,
$$ m_{n+1}(\widehat{f_0}\otimes \overline{f_1}\otimes \cdots \otimes \overline{f_n})=  (-1)^{ (n+1)|\widehat{f_0}| + \sum_{k=1}^{n}(n-k)|\overline{f_k}|} \lambda^{n-1} \overline{f_0\{f_1,\dots, f_n\}};$$

\item[(iv)] All other components of operators $\{m_n \}_{n\geqslant 1}$ vanish.
\end{itemize}
Furthermore, the maps $\{\ell_n\}_{n\geqslant 1}$ are given as follows.
\begin{itemize}
\item[(i)] For homogeneous elements $f,g\in\End_{sV}$,
$$\begin{array}{rcl}
\ell_2(\widehat{f}\otimes \widehat{g})& = & m_2(\widehat{f}\otimes \widehat{g})-(-1)^{|\widehat{f}|| \widehat{g}|} m_2(\widehat{g}\otimes \widehat{f})\\
& = & \widehat{ f\{g\}} -(-1)^{|f||g|}\widehat{ g\{ f \}} \\
 & =&\widehat{[f,g]_G};
\end{array}$$

\item[(ii)] For homogeneous elements $f,g\in\End_{sV}$,
$$\begin{array}{rcl}
\ell_2(\widehat{f}\otimes \overline{g}) & = & m_2(\widehat{f}\otimes\overline{g})-(-1)^{|\widehat{f}|| \overline{g}|} m_2(\overline{g}\otimes \widehat{f})\\
& = & \overline{f\{g\}} -(-1)^{|f||g|}\overline{g\{f\}}
  \\
 & =&   \overline{[f,g]_G};
\end{array}$$

\item[(iii)] For homogeneous elements $f_0,f_1,\dots,f_n\in \End_{sV}$ with $n\geqslant 1$,
$$\begin{aligned}
    \ell_{n+1}(\widehat{f_0}\otimes \overline{f_1}\otimes \cdots \otimes \overline{f_n})
 = & \sum_{\sigma\in \rmS_n} \chi(\sigma;\overline{f_1},\dots, \overline{f_n})m_n(\widehat{f_0} \otimes \overline{f_{\sigma(1)}}\otimes \cdots \otimes \overline{f_{\sigma(n)}})\\
 = &  \sum_{\sigma\in \rmS_n}(-1)^{(n+1)|\widehat{f_0}|+\sum_{k=1}^{n-1}(n-k)|\overline{f_{\sigma(k)}}|}  \lambda^{n-1} \overline{f_0\{ f_{\sigma(1)},\dots ,f_{\sigma(n)}  \}};
\end{aligned}$$

\item[(iv)] For homogeneous elements $f_0,f_1,\dots,f_n\in \End_{sV}$ with $n\geqslant 1$ and $1\leqslant i\leqslant n$,
$$\ell_{n+1}(\overline{f_1}\otimes \cdots \otimes \overline{f_{i}} \otimes \widehat{f_0} \otimes \overline{f_{i+1}} \otimes \cdots \otimes \overline{f_n})=(-1)^{ i+|\widehat{f_0}|(\sum_{k=1}^{i} |\overline{f_k}|)} \ell_{n+1} (\widehat{f_0}\otimes \overline{f_1}\otimes \cdots \otimes \overline{f_n});$$

\item[(v)] All other components of the operators $\{\ell_n \}_{n\geqslant 1}$ vanish.
\end{itemize}

Via the  bijections     $f\mapsto \widehat{f}$ and $g\mapsto \overline{g}$, it is readily verified that the above $L_{\infty}$-structure on  $\mathbf{Hom}(\mathscr{S} (\Dif^{\ac}), \End_{sV})$  is exactly the one on    $\frakC_{\DA}(V)$  defined in Eq~\meqref{eq:linfdiff}, thereby proving Theorem~\mref{th:linfdiff}.

\subsection{Another description of homotopy differential algebras}\
\mlabel{ss: another definition}

We end the paper with a characterization of the key notion of homotopy differential algebras and give a generalization of a work of Kajura and Stasheff~\mcite{KS06}.

By the last paragraph of \S\mref{ss:homocood}, there exists another definition of homotopy differential algebras in terms of the Maurer-Cartan elements in the $L_\infty$-algebra on the deformation complex. Let us make it precise.

 Let $V$ be a graded space. Denote
 $$\overline{T}(V)\coloneqq \oplus_{n\geqslant 1} V^{\ot n},\ \overline{\frakC_\Alg}(V):=\Hom(\overline{T}(sV),sV),\  \overline{\frakC_\DO}(V):= \Hom(\overline{T}(sV),V).  $$
  We consider the subspace
  $$\overline{{\frakC}_{\DA}}(V):=\overline{\frakC_\Alg}(V)\oplus\overline{\frakC_\DO}(V)$$
  of $\frakC_\DA(V)$ in Eq.~\meqref{eq:linfdiff}.
It is easy to see that   $\overline{{\frakC}_{\DA}}(V)$ is an $L_\infty$-subalgebra of $\frakC_{\DA}(V)$.

By the remark at the end of \S\mref{ss:homocood}, Definitions~\mref{de:homodifalg}  can be rephrased as follows.
\begin{prop}\mlabel{de:homodifalg3}
Let $V$ be a graded space. A \name{homotopy differential algebra structure} of weight $\lambda$ on $V$ is defined to be a Maurer-Cartan element in the   $L_\infty$-algebra $\overline{{\frakC}_{\DA}}(V)$.
\end{prop}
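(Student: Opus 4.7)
My plan is to establish the proposition as a direct consequence of Proposition~\ref{Prop: Linfinity give MC} combined with the explicit identification of the convolution $L_\infty$-algebra carried out in Section~\ref{ss: Linifty}. First I will apply Proposition~\ref{Prop: Linfinity give MC} with $\calc = \Dif^{\ac}$ (which is a coaugmented homotopy cooperad by Proposition~\ref{pp:dualcoop}) and $\calp = \End_V$, obtaining a natural bijection
\[
\Hom_{udgOp}\!\bigl(\Omega(\Dif^{\ac}),\, \End_V\bigr) \;\cong\; \calm\calc\!\left(\mathbf{Hom}(\overline{\Dif^{\ac}},\, \End_V)^{\prod}\right).
\]
Since Theorem~\ref{thm:difmodel} identifies $\Omega(\Dif^{\ac})$ with $\Difinfty$, the left-hand side is, by Definition~\ref{de:homodifalg}, exactly the set of homotopy differential algebra structures of weight $\lambda$ on $V$.

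Next I will identify the convolution $L_\infty$-algebra $\mathbf{Hom}(\overline{\Dif^{\ac}},\,\End_V)^{\prod}$ with $\overline{\frakC_\DA}(V)$. For this I will repeat the reasoning of Section~\ref{ss: Linifty}: using the natural isomorphism
\[
\mathbf{Hom}(\Dif^{\ac}, \End_V)\;\cong\;\mathbf{Hom}(\mathscr{S}(\Dif^{\ac}), \End_{sV})
\]
and the bijections $f\mapsto \widehat{f}$, $g\mapsto \overline{g}$, Section~\ref{ss: Linifty} produces an explicit isomorphism of $L_\infty$-algebras $\mathbf{Hom}(\mathscr{S}(\Dif^{\ac}), \End_{sV})^{\prod}\cong \frakC_\DA(V)$ with operations matching those of Theorem~\ref{th:linfdiff}. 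Restricting to the coaugmentation kernel $\overline{\mathscr{S}(\Dif^{\ac})}$ on the source amounts to discarding the arity-one summand $\bfk\,\widetilde{m_1}$, and under $f\mapsto \widehat{f}$ this corresponds exactly to the passage from $\frakC_\DA(V)$ to its $L_\infty$-subalgebra $\overline{\frakC_\DA}(V)$ on the target; every formula for the operations $\{\ell_n\}$ established in Section~\ref{ss: Linifty} then restricts verbatim.

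Chaining the two identifications yields the desired bijection between homotopy differential algebra structures on $V$ and Maurer-Cartan elements of $\overline{\frakC_\DA}(V)$. The only genuinely delicate point will be the bookkeeping that pinpoints the coaugmentation summand of $\mathscr{S}(\Dif^{\ac})$ as exactly the piece removed in passing from $\frakC_\DA(V)$ to $\overline{\frakC_\DA}(V)$; once that matching is made explicit, no further obstacle arises, since every substantive computation has already been carried out in Theorem~\ref{thm:difmodel}, Proposition~\ref{Prop: Linfinity give MC}, and Section~\ref{ss: Linifty}.
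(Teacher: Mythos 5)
Your overall strategy---apply Proposition~\ref{Prop: Linfinity give MC} to $\calc=\Dif^{\ac}$ and $\calp=\End_V$, then feed in the identification of the convolution $L_\infty$-algebra from \S\ref{ss: Linifty}---is exactly the route the paper intends (its justification is the one-line appeal to the remark at the end of \S\ref{ss:homocood}). Two minor points first: $\Omega(\Dif^{\ac})=\Difinfty$ holds by the \emph{definition} of $\Difinfty$, not by Theorem~\ref{thm:difmodel}, and minimality plays no role here.

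The step you flag as ``the only genuinely delicate point'' is, however, resolved incorrectly. Passing from $\Dif^{\ac}$ to the coaugmentation kernel $\overline{\Dif^{\ac}}$ deletes the summand $\bfk\, sm_1$ in \emph{arity one}; under the dictionary $f\mapsto \widehat{f}$ of \S\ref{ss: Linifty} this deletes the component $\Hom(sV,sV)$ of $\frakC_{\Alg}(V)$, i.e.\ precisely the slot in which $b_1$ (equivalently $m_1$, the differential of $V$) lives. By contrast, the passage from $\frakC_{\DA}(V)$ to $\overline{\frakC_{\DA}}(V)$ deletes the \emph{arity-zero} components $\Hom(\bfk,sV)\oplus\Hom(\bfk,V)\cong sV\oplus V$ (the curvature slots, as the remark following Proposition~\ref{de:homodifalg3} makes clear) and \emph{retains} $\Hom(sV,sV)$. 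These are different subspaces, so the claimed matching ``coaugmentation kernel on the source $\leftrightarrow$ reduced part on the target'' is false; taken literally it would yield Maurer--Cartan elements with no $b_1$-component, contradicting Proposition~\ref{de:homodifalg4}, whose solution of the Maurer--Cartan equation explicitly includes $b_1$ and the identity $b_1\{b_1\}=0$. What is actually needed is the standard repackaging: Definition~\ref{de:homodifalg} fixes a complex $(V,m_1)$, and Proposition~\ref{Prop: Linfinity give MC} produces Maurer--Cartan elements of $\mathbf{Hom}(\overline{\Dif^{\ac}},\End_{(V,m_1)})^{\prod}$, whose $\ell_1$ involves $d_{\End_V}=[m_1,-]$; one must then verify that such an element together with the datum $m_1$ itself assembles into a single Maurer--Cartan element of the larger $L_\infty$-algebra $\overline{\frakC_{\DA}}(V)$ on the bare graded space $V$ (which has $\ell_1=0$ but the extra $\Hom(sV,sV)$-slot), with $m_1^2=0$ absorbed into the quadratic term of the Maurer--Cartan equation. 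That verification---or, equivalently, a direct comparison of the identities in Propositions~\ref{de:homodifalg2} and~\ref{de:homodifalg4}---is missing from your argument.
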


Notice that if we take the whole space  $\frakC_{\DA}(V)$ instead of the reduced part $\overline{\frakC_\Alg}(V)$, we get the curved   homotopy  differential algebra structure.

Solving the Maurer-Cartan equation in the $L_\infty$-algebra  $\overline{{\frakC}_{\DA}}(V)$,  Proposition~\mref{de:homodifalg3} can be explicated as follows.

\begin{prop} \mlabel{de:homodifalg4}
A homotopy differential algebra structure of weight $\lambda$ on a graded space $V$ is equivalent to two families of linear maps
$$b_n: (sV)^{\ot n}\rightarrow sV, \quad  R_n:(sV)^{\ot n}\rightarrow V, \quad n\geqslant 1,$$
both of degree $-1$ and subject  to the following equations:
$$\sum_{i+j-1=n\atop i, j\geqslant 1} b_{i}\{b_j\}=0 \ \mathrm{and}\
	\sum_{u+j-1=n\atop u, j\geqslant 1}  sR_{u}\{b_j\}
	=\sum_{l_1+\dots+l_q+p-q=n \atop  {l_1, \dots, l_q\geqslant 1\atop
  1\leqslant q\leqslant p\leqslant n} }\lambda^{q-1} b_p\{sR_{l_1},\dots,sR_{l_q}\}.
	$$
\end{prop}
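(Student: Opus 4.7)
The strategy is to expand the Maurer-Cartan equation~\eqref{eq:mce} in $\overline{\frakC_{\DA}}(V)$ directly and split it into its two natural components. A degree $-1$ element of $\overline{\frakC_{\DA}}(V)=\overline{\frakC_\Alg}(V)\oplus\overline{\frakC_\DO}(V)$ decomposes uniquely as $\alpha=b+R$, with $b=\sum_n b_n$ of degree $-1$ in $\overline{\frakC_\Alg}(V)$ and $R=\sum_n R_n$ of degree $-1$ in $\overline{\frakC_\DO}(V)$. From rules (i)--(v) defining $\{l_n\}_{n\geq 1}$ in \S\ref{ss:difinf}, the only $\frakC_\Alg$-valued component of $l_n(\alpha^{\ot n})$ comes from $l_2(b\ot b)$, while the $\frakC_\DO$-valued components arise from $l_2$ on a mixed pair and from $l_{m+1}$ applied to one $b$ and $m\geq 2$ copies of $R$. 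Hence the Maurer-Cartan equation splits into two independent identities.

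The $\overline{\frakC_\Alg}(V)$-identity receives a contribution only from $n=2$ and reads $-\tfrac12\,l_2(b\ot b)=0$. Since $l_2$ on $\overline{\frakC_\Alg}(V)^{\ot 2}$ is the Gerstenhaber bracket and $|b|=-1$ forces $[b,b]_G=2\,b\{b\}$, this is equivalent to $b\{b\}=0$. Reading off arity-by-arity yields the first equation $\sum_{i+j-1=n}b_i\{b_j\}=0$.

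For the $\overline{\frakC_\DO}(V)$-identity, the $n=2$ contribution $-\tfrac12(l_2(b\ot R)+l_2(R\ot b))$ collapses to $-l_2(b\ot R)$, since the transposition sign $\chi$ between two odd-degree elements equals $1$. For $n=m+1\geq 3$, only the $m+1$ monomials of $\alpha^{\ot(m+1)}$ containing exactly one $b$ contribute. By rule (iv), each such monomial equals $l_{m+1}(b\ot R^{\ot m})$ because the exponent $(|f|+1)(\sum_{j=1}^{k}|R|)+k=2k$ is even; their sum is $(m+1)\,l_{m+1}(b\ot R^{\ot m})$. By rule (iii), the symmetrization over $S_m$ has $\sigma$-independent sign $(-1)^{m(m+1)/2}$ (using $\chi(\sigma;R,\dots,R)=\sgn(\sigma)^{2}=1$), so it produces $m!$ equal copies. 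The coefficient from~\eqref{eq:mce} simplifies as $\tfrac{(-1)^{m(m+1)/2}}{(m+1)!}\cdot(m+1)\cdot\lambda^{m-1}\,m!\,(-1)^{m(m+1)/2}=\lambda^{m-1}$. Substituting rule (ii) in the form $l_2(b\ot R)=-s^{-1}[b,sR]_G$ and expanding $[b,sR]_G=b\{sR\}-sR\{b\}$ (the cross exponent $(|f|+1)(|g|+1)=(-1)(0)=0$ for $b=sf$, $sR=sg$), applying $s$ and rearranging leads to
\[
sR\{b\}=\sum_{m\geq 1}\lambda^{m-1}\,b\{\underbrace{sR,\dots,sR}_{m}\},
\]
which arity-by-arity is the second equation of the proposition.

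The principal challenge is sign bookkeeping: one must track the Koszul signs via $\chi$, the position-dependent signs in rules (iii) and (iv), and the factor $(-1)^{n(n-1)/2}/n!$ in~\eqref{eq:mce}. The uniform odd parity $|b|=|R|=-1$ makes the computation tractable: the Koszul signs among identical $R$'s reduce to $\sgn(\sigma)^{2}=1$, the exponent $2k$ in rule (iv) contributes sign $1$, and the surviving sign $(-1)^{m(m+1)/2}$ from rule (iii) cancels exactly against the MC sign, so that only the weight factor $\lambda^{q-1}$ remains in the final identity.
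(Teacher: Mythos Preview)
Your proposal is correct and follows exactly the approach indicated in the paper, which simply states that Proposition~\ref{de:homodifalg4} is obtained by ``solving the Maurer-Cartan equation in the $L_\infty$-algebra $\overline{{\frakC}_{\DA}}(V)$'' without giving details. You carry out precisely this computation, and your sign bookkeeping (the reduction of $\chi$ to $\sgn(\sigma)^2=1$ on identical odd $R$'s, the exponent $2k$ from rule~(iv), and the cancellation of $(-1)^{m(m+1)/2}$ against the Maurer-Cartan sign) is accurate.
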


\begin{remark}
\begin{enumerate}
\item
The equivalence between Definition~\mref{de:homodifalg2} and Proposition~\mref{de:homodifalg4} is given by
 \[m_n:=s^{-1}\circ b_n\circ s^{\ot n} :V^{\ot n}\rightarrow V,\quad  d_n:=R_n\circ s^{\ot n}:V^{\ot n}\rightarrow V,\ n\geqslant 1.\]

  \item Thanks to Proposition~\mref{de:homodifalg4}, Definition~\mref{de:homodifalg2} generalizes the notion of homotopy derivations on $A_\infty$-algebras introduced by
Kajura and Stasheff \mcite{KS06} from zero weight   to nonzero weight.
\end{enumerate}
\end{remark}

\noindent
{\bf Acknowledgments. } This work was   supported  by the National Natural Science Foundation of China (No.  12071137, 11971460), by  Key Laboratory of MEA (Ministry of Education), by  Shanghai Key Laboratory of PMMP  (No.   22DZ2229014),  and by Fundamental Research Funds for the Central Universities.

 The authors would like to express our sincere gratitude to the referee for helpful suggestions which improve much the presentation of  the paper.

\noindent
{\bf Declaration of interests. } The authors have no conflicts of interest to disclose.

\noindent
{\bf Data availability. } No new data were created or analyzed in this study.

\end{document}